\setlist{font=\normalfont}
\renewcommand*{\fps@figure}{htpb!}
\newtheorem{theorem}{Theorem}[section]
\theoremstyle{definition}
\newtheorem{proposition}[theorem]{Proposition}
\newtheorem{lemma}[theorem]{Lemma}
\newtheorem{definition}[theorem]{Definition}
\newtheorem{remark}[theorem]{Remark}
\newtheorem{corollary}[theorem]{Corollary}
\DeclareMathAlphabet{\AMSbb}{U}{msb}{m}{n}
\providecommand{\abs}[1]{\left|{#1}\right|}
\DeclareMathOperator{\qtr}{\widehat{tr}}
\DeclareMathOperator{\Span}{span}
\newcommand{\term}[1]{\textit{#1}}
\newcommand{\cx}{\AMSbb{C}}
\newcommand{\nats}{\AMSbb{N}}
\newcommand{\ints}{\AMSbb{Z}}
\newcommand{\reals}{\AMSbb{R}}
\newcommand{\iunit}{\mathbf{i}}
\newcommand{\ground}{\ints[\iunit][q^{\pm1/8}]}
\newcommand{\groundev}{\ints[q^{\pm1/2}]}
\newcommand{\indexR}{\ints(\!(q^{1/2})\!)}
\newcommand{\indexRp}{\ints[\![q^{1/2}]\!]}
\newcommand{\SL}{\mathrm{SL}}
\newcommand{\PSL}{\mathrm{PSL}}
\newcommand{\skein}{\mathcal{S}}
\newcommand{\skeinev}{\skein^{\mathrm{ev}}}
\newcommand{\skeincr}{\skein_{\mathrm{cr}}}
\newcommand{\skeincrev}{\skein_{\mathrm{cr}}^{\mathrm{ev}}}
\newcommand{\surface}{\Sigma}
\newcommand{\surclose}{\overline{\Sigma}}
\newcommand{\marked}{\mathcal{P}}
\newcommand{\heeg}{\mathcal{H}}
\newcommand{\cut}{\Theta}
\newcommand{\annulus}{\AMSbb{A}}
\newcommand{\lantern}{\AMSbb{L}}
\newcommand{\qtorus}{\AMSbb{T}}
\newcommand{\qtorusev}{\qtorus^{\mathrm{ev}}}
\newcommand{\qglue}{\widehat{\mathcal{G}}}
\newcommand{\qgluev}{\widehat{\mathcal{G}}^{\mathrm{ev}}}
\newcommand{\qvar}[1]{\hat{#1}}
\newcommand{\qz}{\qvar{z}}
\newcommand{\qw}{\qvar{w}}
\newcommand{\qx}{\qvar{x}}
\newcommand{\qy}{\qvar{y}}
\newcommand{\qv}{\qvar{v}}
\newcommand{\qm}{\qvar{m}}
\newcommand{\qZ}{\qvar{Z}}
\newcommand{\qW}{\qvar{W}}
\newcommand{\qX}{\qvar{X}}
\newcommand{\qY}{\qvar{Y}}
\newcommand{\qV}{\qvar{V}}
\newcommand{\qe}{\qvar{e}}
\newcommand{\qE}{\qvar{E}}
\newcommand{\ideal}[1]{\langle#1\rangle}
\newcommand{\lideal}[1]{\vphantom{\rangle}^{}_{L\!}\ideal{#1}}
\newcommand{\quotbyL}[1]{/\!\lideal{#1}}
\newcommand{\rideal}[1]{\ideal{#1}^{}_{\!R}}
\newcommand{\triang}{\mathcal{T}}
\newcommand{\Emon}{\mathcal{E}}
\newcommand{\embed}{\hookrightarrow}
\newcommand{\onto}{\twoheadrightarrow}
\tikzset{knot diagram/every knot diagram/.style={
    background color=gray!20,clip width=5,end tolerance=5pt,clip radius=0.1cm}}
\newcommand{\stsize}{\scriptsize}
\newenvironment{linkdiag}[1][0.5]{\,%\mathop{}\!
  \begin{tikzpicture}[scale=0.8,baseline=(ref.base)]
    \node (ref) at (0.5,{#1}){\phantom{$-$}};}{\end{tikzpicture}\,%\!\mathop{}
	}
\tikzset{-o-/.code 2 args={
    \pgfkeysalso{decoration={markings,mark=at position #1 with {\arrow{#2}}},
      postaction={decorate}}
}}
\newcommand{\stnode}[1]{node[inner sep=1pt,right]{\stsize$#1$}}
\newcommand{\stnodel}[1]{node[inner sep=1pt,left]{\stsize$#1$}}
\newcommand{\relempty}[1][]{
\begin{linkdiag}
\fill[gray!20] (0,0)rectangle(1,1);
\draw[#1] (1,0)--(1,1);
\end{linkdiag}
}
\newcommand{\relcross}[2]{
\begin{linkdiag}
\fill[gray!20] (0,0)rectangle(1,1);
\draw[->] (1,0)--(1,1);
\begin{knot}
\strand[thick] (0,0.3)..controls +(0.5,0) and +(-0.5,0)..(1,0.7);
\strand[thick] (0,0.7)..controls +(0.5,0) and +(-0.5,0)..(1,0.3);
\end{knot}
\draw (1,0.7)\stnode{#1} (1,0.3)\stnode{#2};
\end{linkdiag}
}
\newcommand{\relarc}[3][->]{
\begin{linkdiag}
\fill[gray!20] (0,0)rectangle(1,1);\draw[#1] (1,0)--(1,1);
\draw[thick] (1,0.7)..controls(0.2,0.7) and (0.2,0.3)..(1,0.3);
\draw (1,0.7)\stnode{#2} (1,0.3)\stnode{#3};
\end{linkdiag}
}
\newcommand{\relacross}[3][->]{
\begin{linkdiag}
\fill[gray!20] (0,0)rectangle(1,1);\draw[#1] (1,0)--(1,1);
\draw[thick] (0,0.7)--(1,0.7) (0,0.3)--(1,0.3);
\draw (1,0.7)\stnode{#2} (1,0.3)\stnode{#3};
\end{linkdiag}
}
\newcommand{\relcorner}[3][]{
\begin{linkdiag}
\fill[gray!20] (0,-0.1)rectangle(1,1.1);\draw[#1] (1,-0.1)--(1,1.1);
\draw[thick] (1,0.8)..controls(0.2,0.8) and (0.2,0.2)..(1,0.2);
\draw[fill=white] (1,0.5)circle(0.07);
\draw (1,0.8)\stnode{#2} (1,0.2)\stnode{#3};
\end{linkdiag}
}
\newcommand{\relbottom}[1]{
\begin{linkdiag}
\fill[gray!20] (0,0)rectangle(1,1);\draw (1,0)--(1,1);
\draw[thick] (0,0.2)--(1,0.2);
\draw[fill=white] (1,0.5)circle(0.07);
\draw (1,0.2)\stnode{#1};
\end{linkdiag}
}
\newcommand{\reltwup}[1]{
\begin{linkdiag}
\fill[gray!20] (0,0)rectangle(1,1);\draw (1,0)--(1,1);
\draw[thick] (0,0.2)..controls (0.5,0.2) and (0.5,0.8)..(1,0.8);
\draw[fill=white] (1,0.5)circle(0.07);
\draw (1,0.8)\stnode{#1};
\end{linkdiag}
}
\newcommand{\relaround}[1]{
\begin{linkdiag}
\fill[gray!20] (-0.2,-0.2)rectangle(1.2,1.2);
\draw[fill=white] (0.5,0.5)circle(0.25);
\foreach \t in {0,120,240}
\draw[fill=white] (0.5,0.5) +(\t:0.25)circle(0.07);
\draw[thick] (-0.2,0.5)..controls +(0.35,0) and +(-0.35,0)..(0.5,#1)
..controls +(0.35,0) and +(-0.35,0).. (1.2,0.5);
\end{linkdiag}
}
\newcommand{\red}[1]{{\color{red} #1}}
\def\BN{\AMSbb N}
\def\BZ{\AMSbb Z}
\def\BQ{\AMSbb Q}
\def\BR{\AMSbb R}
\def\BC{\AMSbb C}
\def\calN{\mathcal N}
\def\s{\sigma}
\def\a{\alpha}
\def\d{\delta}
\def\be{\begin{equation}}
\def\ee{\end{equation}}
\def\Span{\mathrm{Span}}
\def\DGG{\mathrm{DGG}}
\def\BD{\mathbb{\Delta}}
\def\BE{\AMSbb E}
\def\sym{\mathrm{sym}}
\def\Zar{\vec{Z}}
\renewcommand\thepart{\@Roman\c@part}%
\renewcommand\part{%
\if@noskipsec \leavevmode \fi
\par
\addvspace{6.7ex}%
\@afterindentfalse
\secdef\@part\@spart}
\def\@part[#1]#2{%
\ifnum \c@secnumdepth >\m@ne
\refstepcounter{part}%
\addcontentsline{toc}{part}{Part~\thepart.\ #1}%
\else
\addcontentsline{toc}{part}{#1}%
\fi
{\parindent \z@ \raggedright
\interlinepenalty \@M
\normalfont
\ifnum \c@secnumdepth >\m@ne
\centering\large\scshape \partname~\thepart.%
\hspace{1ex}%
\fi%
\large\scshape #2%
\markboth{}{}\par}%
\nobreak
\vskip 4.7ex
\@afterheading}
\def\@spart#1{
\refstepcounter{part}%
\addcontentsline{toc}{part}{#1}%
% \fi
{\parindent \z@ \raggedright
\interlinepenalty \@M
\normalfont
\centering\large\scshape #1\par}%
\nobreak
\vskip 4.7ex
\@afterheading}
\renewcommand*\l@part[2]{%
\ifnum \c@tocdepth >-2\relax
\addpenalty\@secpenalty
\addvspace{0.75em \@plus\p@}%
\begingroup
\parindent \z@ \rightskip \@pnumwidth
\parfillskip -\@pnumwidth
{\leavevmode
\normalsize \bfseries #1\hfil \hb@xt@\@pnumwidth{\hss #2}}\par
\nobreak
\if@compatibility
\global\@nobreaktrue
\everypar{\global\@nobreakfalse\everypar{}}%
\fi
\endgroup
\fi}
\def\l@subsection{\@tocline{2}{0pt}{2pc}{6pc}{}}
\begin{document}
%\title{The 3d-quantum trace map and the 3d-index}
%\title{From the 3d-skein module to the 3d-index via the quantum trace map} 
\title{The 3d-index of the 3d-skein module via the quantum trace map} 

\author{Stavros Garoufalidis}
\address{% Max Planck Institute for Mathematics \\
% Vivatsgasse 7, 53111 Bonn, GERMANY \newline
Shenzhen International Center for Mathematics, Department of Mathematics \\
Southern University of Science and Technology \\
1088 Xueyuan Avenue, Shenzhen, Guangdong, China \newline
{\tt \url{http://people.mpim-bonn.mpg.de/stavros}}}
\email{stavros@mpim-bonn.mpg.de}

\author{Tao Yu}
\address{Shenzhen International Center for Mathematics \\
Southern University of Science and Technology \\
1088 Xueyuan Avenue, Shenzhen, Guangdong, China}
\email{yut6@sustech.edu.cn}

\thanks{
  {\em Keywords and phrases}: Skein module, 3-manifolds, knots,
  character varieties, quantum trace map, ideal triangulations, gluing equations,
  quantum gluing module, 3d-index, $q$-series, topological quantum field theory,
  BPS states, $\calN=2$-sypersymmetry. 
}

\date{7 June, 2024}%{\today}
% \dedicatory{}

\begin{abstract}
We define a map from the skein module of a cusped hyperbolic 3-manifold
to the ring of Laurent series in one variable with integer coefficients that satisfies
two properties:
  its evaluation at peripheral curves coincides with the
  Dimofte--Gaiotto--Gukov 3d-index, and
  it factors through the 3d-quantum trace map associated to a suitable ideal
  triangulation of the manifold.
The map fulfills a supersymmetry prediction of mathematical physics and is part
of a conjectural 3+1 dimensional topological quantum field theory.
\end{abstract}

\maketitle

{\footnotesize
\tableofcontents
}

%%%%%%%%%%%%%%%%%%%%%%%%%%%%%%%%%%%%%%%%%%%%%%%%%%%%%%%%%%%%%%%%%%%%%%%%%%%%
%%%%%%%%%%%%%%%%%%%%%%%%%%%%%%%%%%%%%%%%%%%%%%%%%%%%%%%%%%%%%%%%%%%%%%%%%%%%

\section{Introduction}
\label{sec.intro}

\subsection{Skein modules, quantum trace map and the 3d-index}
\label{sub.3things}

The paper concerns the extension of an interesting power series invariant of knots,
the 3d-index of Dimofte--Gaiotto--Gukov~\cite{DGG1} to the skein module of the knot
complement, using crucially a recently developed 3d-quantum trace map~\cite{GY,PP}. 

Recall that
the skein module $\skein(M)$ of an oriented, connected 3-manifold $M$, introduced
in the early days of quantum topology
by Przytycki~\cite{Przytycki} and Turaev~\cite{Tu:conway}, is a geometrically defined
version of the $\PSL_2(\BC)$-character variety of $\pi_1(M)$. Its spanning set is
the set of unoriented links in $M$ and the local relations are on the one hand
motivated by the recursion relations for the Jones polynomial of a knot in $S^3$,
and on the other hand are deformations of the trace identity for $2 \times 2$ matrices.

The skein module of a 3-manifold plays an important role in topological quantum
field theories (in short, TQFTs) in both $2+1$ and $3+1$ dimensions. Indeed, it is
well-known that the Reshetikhin--Turaev invariant of links in an integer homology
3-sphere $M$ is part of a 2+1 dimensional TQFT and defines a map
\begin{equation}
\label{WRT}
Z_{M,\omega} : \skein(M) \to \BZ[\omega]\end{equation}
for a root of unity $\omega$~\cite{RT,Turaev}. A repackaging of the collection of 
the above maps indexed by complex roots of unity to a single element of the Habiro
ring is possible using Habiro's work~\cite{Habiro:WRT}; see also~\cite{GL:skein}.

The next concept is the 3d-index of Dimofte--Gaiotto--Gukov which roughly speaking
sign-counts states in a supersymmetric field theory using ideas from a 3d-3d
correspondence of mathematical physics, and organizes the count in a Laurent series
in $\indexR$ with integer coefficients~\cite{DGG1,DGG2}. This series
is presented as a lattice sum of products of building blocks (the tetrahedron index),
and depends on a suitable ideal triangulation of a 3-manifold $M$ with torus boundary,
as well as on a choice of an integral homology basis of boundary curves. As was
predicted by physics, this DGG 3d-index was shown to be a topological invariant of
cusped hyperbolic 3-manifolds~\cite{GHRS}. 

The 3d-3d correspondence and its $\calN=2$-supersymmetry algebra predicts not only
the existence of the 3d-index associated to an ideal triangulation $\triang$, but also
to a module $\qglue(\triang)$ of operators which are roughly speaking, a left/right
quotient of a ring of polynomials in $q$-commuting variables (i.e., of a quantum
torus). This was
discussed in several contexts in the physics literature; see for
example~\cite{Dimofte:3dsuper} and~\cite{AGLR}. A mathematical definition of this
module that and its corresponding 3d quantum-trace map was given independently by
Panich--Park and the authors~\cite{GY,PP}, both definitions using as input an ideal
triangulation of the ambient 3-manifold.

But the $\calN=2$-supersymmetry algebra predicts more, namely an extension of the
3d-index to the skein module with two properties:
\begin{itemize}
\item[(a)]
  when evaluated on the peripheral part of the skein module
  coincides with the DGG 3d-index.
\item[(b)]
  it factors through the quantum trace map.
\end{itemize}
This prediction is discussed in detail by Agarwal--Gang--Lee--Romo~\cite{AGLR},
based on earlier work~\cite[Sec.5]{Gang:aspects}.

The goal of the paper is to mathematically fulfill this prediction.
In fact, the map~\eqref{Imap} defined below concerns a 3+1 dimensional aspect of
the skein module $\skein(M)$. Indeed, Witten conjectured the existence of a 3+1
dimensional TQFT whose vector space associated to a 3-manifold $M$ is the skein
module $\skein(M)$. The finiteness of the rank of this module, conjectured by Witten,
was proven by Gunningham--Jordan--Safronov~\cite{Gunningham-Jordan} for closed
3-manifolds. Arithmetic aspects of this TQFT were phrased in terms of holomorphic
quantum modular forms in~\cite{GZ:kashaev}, and in terms of a map from the skein
module of an integer homology 3-sphere to the Habiro ring~\cite[Eqn(3)]{GL:skein}
mentioned above. A full definition of this 3+1 dimensional TQFT is an interesting and
challenging question which, without doubt, lead to a better understanding of 4
dimensional smooth manifolds. 

\subsection{Our results}
\label{sub.results}

We are now ready to formulate our results.

\begin{theorem}
\label{thm.1}
For a cusped hyperbolic 3-manifold $M$ with no non-peripheral $\ints/2$-homology,
there exist a map
\begin{equation}
\label{Imap}    
I_M : \skeinev(M)\to\indexR
\end{equation}
with the following properties:
\begin{itemize}
\item[(a)]
  its restriction at peripheral elements agrees with the 3d-index of
  Dimofte--Gaiotto--Gukov:
\begin{equation}
\label{I2DGG}  
I_M(\qm_\lambda^{-2m}\qm_\mu^{2e}) = I^\DGG_M(m,e) \,.
\end{equation}
\item[(b)]
  it factors through the 3d-quantum trace map for suitable triangulations.
\end{itemize}
\end{theorem}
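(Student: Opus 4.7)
The plan is to fix a suitable ideal triangulation $\triang$ of $M$---one that is geometric, appropriately oriented, and for which the 3d-quantum trace map of~\cite{GY,PP} is available---and to construct the map $I_M$ as a composition
\[
  I_M \;=\; \mathrm{Ind}_\triang \circ \qtr^{\mathrm{ev}}_\triang,
\]
where $\qtr^{\mathrm{ev}}_\triang : \skeinev(M) \to \qgluev(\triang)$ is the even 3d-quantum trace map into the quantum gluing module, and $\mathrm{Ind}_\triang : \qgluev(\triang) \to \indexR$ is an \emph{index functional} that I construct as a state sum of tetrahedron indices, generalizing the DGG lattice sum from peripheral curves to arbitrary elements of $\qgluev(\triang)$.

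The first step is to define $\mathrm{Ind}_\triang$ on the ambient quantum torus in the edge/shape variables and then check that it descends to the left/right quotient $\qgluev(\triang)$. Any representative is a Laurent polynomial in $q$-commuting variables, and I would send a monomial to a lattice sum whose summands are products, over tetrahedra, of tetrahedron indices whose arguments are shifted according to the exponents of the monomial; the overall $q$-power prefactor is fixed by the normal-ordering convention. The descent to $\qgluev(\triang)$ amounts to showing that the two-sided gluing ideal is annihilated; this should reduce, after standard manipulations of the tetrahedron index, to the $q$-Pochhammer and Jacobi-style identities already exploited in~\cite{DGG1,DGG2,GHRS} for the peripheral case.

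Property~(a) is then essentially a normalization check: on the peripheral part, $\qtr^{\mathrm{ev}}_\triang$ sends $\qm_\lambda^{-2m}\qm_\mu^{2e}$ to the corresponding meridian--longitude monomial in the peripheral quantum torus, and substituting this into $\mathrm{Ind}_\triang$ reproduces exactly the DGG lattice sum $I^{\DGG}_M(m,e)$. The hypothesis of no non-peripheral $\ints/2$-homology, together with the passage to $\skeinev$ and $\qgluev$, enters here to guarantee that all relevant exponents land in the even sublattice, so that only integer shifts appear in the arguments of the tetrahedron index.

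The main obstacle is property~(b) together with topological invariance of $I_M$: proving that the composition $\mathrm{Ind}_\triang \circ \qtr^{\mathrm{ev}}_\triang$ is independent of the chosen triangulation $\triang$. The quantum trace factor is compatible with 2--3 Pachner moves by~\cite{GY,PP}, and the DGG index is a topological invariant by~\cite{GHRS}; the new content is that the \emph{operator-valued} functional $\mathrm{Ind}_\triang$ itself transforms correctly under 2--3 moves, not only on its peripheral part. I expect to reduce this to a generalized pentagon identity for tetrahedron indices with shifted arguments, obtained by differentiating (or shifting) the standard pentagon identity and then tracking the explicit isomorphism $\qgluev(\triang) \cong \qgluev(\triang')$ provided by~\cite{GY,PP}. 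Carefully handling the framing and sign bookkeeping afforded by the $\ints/2$-homology hypothesis is the most delicate technical step of the argument.
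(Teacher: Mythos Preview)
Your outline is essentially the paper's own approach: define an index functional $I_\triang$ on the quantum torus, show it descends to $\qgluev(\triang)$ by checking the edge, vertex, and Lagrangian relations against the known identities for $J_\Delta$, compose with the even quantum trace, and establish triangulation-independence via Pachner moves. Two genuine gaps stand between your proposal and a proof, however.

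First, you do not address convergence of the lattice sum defining $\mathrm{Ind}_\triang$. For an arbitrary monomial $[\Zar^{S_0}]$ this is not automatic: the sum runs over a full-rank sublattice of $\ints^{N-r}$, and the $q^{1/2}$-degree of the summand is controlled by a quadratic form (the double-arc function $\delta$) plus linear terms. The paper handles this by restricting to \emph{1-efficient} triangulations and invoking normal surface theory to show that the degree is proper on the relevant cone of solutions; your proposal's ``geometric, appropriately oriented'' triangulation does not suffice, and without 1-efficiency the series can genuinely diverge.

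Second, you only invoke 2--3 Pachner moves. The island of triangulations on which $I_M$ is actually defined consists of the 1-efficient regular subdivisions of the Epstein--Penner cell decomposition, and these are connected not just by 3--2 moves but also by 2--0 moves (to collapse flat tetrahedra arising from polytope faces). The paper therefore needs, and proves, compatibility of both $\qtr_\triang$ and $I_\triang$ with the 2--0 move as well; on the index side this uses the quadratic identity~\eqref{eq-J-quad} for $J_\Delta$, not the pentagon. Your ``generalized pentagon identity'' covers only half of what is required.
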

Here, $\skeinev(M)$ is the even part of the skein module (see
Section~\ref{sec-even-skein}) and the condition that $M$ has no non-peripheral
$\ints/2$-homology (i.e., that $H_1(\partial M;\ints/2)\to H_1(M;\ints/2)$ is
surjective, which for instance is satisfied for complements of links in the 3-sphere)
is a technical assumption that can be relaxed; see Remark~\ref{rem-extra-sum}.

A key ingredient of the above map is the 3d quantum trace map. 
The map $I_M$ and the precise meaning of suitable triangulations is made
clear with the next theorem and the discussion following it. 

\begin{theorem}
\label{thm.2}
For every 1-efficient triangulation $\triang$ of a cusped-hyperbolic manifold with no
non-peripheral $\ints/2$-homology, there exists a map
\begin{equation}
\label{IT}
I_\triang : \qgluev(\triang) \to \indexR
\end{equation}
with the following properties: if $\triang_3 \to \triang_2$ (resp.,
$\triang_2 \to \triang_0$) is a 3--2 (resp., 2--0) Pachner move of 1-efficient
triangulations, there exist maps $\phi_{3,2}$ and $\phi_{2,0}$ that fit in
the commutative diagrams 
\begin{equation}
\label{movediag}
\begin{tikzcd}[row sep=tiny]
& \qgluev(\triang_2) \arrow[rd,"I_{\triang_2}"] \arrow[dd,"\phi_{3,2}"] \\
\skeinev(M) \arrow[ru,"\qtr_{\triang_2}"] \arrow[rd,"\qtr_{\triang_3}"'] &&
\indexR \\
&\qgluev(\triang_3) \arrow[ru,"I_{\triang_3}"']
\end{tikzcd}
\qquad
\begin{tikzcd}[row sep=tiny]
& \qgluev(\triang_0) \arrow[rd,"I_{\triang_0}"] \arrow[dd,"\phi_{2,0}"] \\
\skeinev(M) \arrow[ru,"\qtr_{\triang_0}"] \arrow[rd,"\qtr_{\triang_2}"'] &&
\indexR \\
&\qgluev(\triang_2) \arrow[ru,"I_{\triang_2}"']
\end{tikzcd}
\end{equation}
\end{theorem}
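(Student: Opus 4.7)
The strategy is to define $I_\triang$ as a shifted lattice sum of tetrahedron indices indexed by the $q$-monomials of the quantum torus underlying $\qgluev(\triang)$. Concretely, on each monomial $\prod_i \qe_i^{a_i}\prod_j \qm_j^{b_j}$ (with $\qe_i$ edge variables and $\qm_j$ peripheral variables) I would declare the value to be a DGG--style lattice sum in which the exponents $a_i,b_j$ enter as integer shifts of the summation indices appearing in the arguments of each tetrahedron index $I_\Delta$. Specialising all $a_i=0$ and letting the $b_j$ match the peripheral coordinates reproduces $I^\DGG_M(m,e)$ by construction, so property~(a) of Theorem~\ref{thm.1} will be automatic. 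The first nontrivial step is to verify that this assignment annihilates the left and right ideals defining $\qgluev(\triang)$; this should follow from the triality symmetries and quasi-periodicity identities of the tetrahedron index already used in~\cite{DGG1,GHRS}, together with $1$-efficiency to guarantee convergence of the shifted sums in $\indexR$.

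Next, for each Pachner move $\triang \to \triang'$ I would construct $\phi$ from the quantum Neumann--Zagier transformation that relates the two triangulations. This is first of all a morphism of quantum tori; one must then track how the gluing ideals transform to check that it descends to the quantum gluing modules. Commutativity of the left halves of~\eqref{movediag}, namely $\phi \circ \qtr_\triang = \qtr_{\triang'}$, is essentially built into the definition of the $3$d quantum trace in~\cite{GY,PP}, which is designed to behave covariantly under Pachner moves. Commutativity of the right halves, $I_{\triang'} \circ \phi = I_\triang$, reduces monomial by monomial to a pentagon-type identity between lattice sums of tetrahedron indices; for the $3$--$2$ move this is precisely the identity used by Garoufalidis--Hodgson--Rubinstein--Segerman~\cite{GHRS} to prove topological invariance of the DGG index.

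The hardest step will be the $2$--$0$ move. A degenerate face of $\triang_0$ forces a linear gluing relation to eliminate one quantum edge variable, and on the index side this manifests as an infinite geometric series that formally represents a delta function at the collapsed tetrahedron. I would treat this in two stages: first reducing to the peripheral case, where the $2$--$0$ identity for the DGG index is already in~\cite{GHRS}; and then bootstrapping to the full quantum gluing module by arguing that, modulo the gluing ideals, every element of $\qgluev(\triang_2)$ can be rewritten up to pairwise cancellation as a peripheral monomial times a pair of shape contributions that combine into the collapsed-tetrahedron sum. The $1$-efficiency hypothesis and the absence of non-peripheral $\ints/2$-homology are the inputs that should make this cancellation well defined and ensure the resulting sums converge in $\indexR$.
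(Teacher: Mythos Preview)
Your outline has the right overall architecture (define $I_\triang$ as a lattice sum, verify it kills the defining ideals, then check Pachner compatibility via tetrahedron-index identities), but there are two genuine gaps.

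First, your domain for $I_\triang$ is too small. You propose to define $I_\triang$ only on monomials of the form $\prod_i \qe_i^{a_i}\prod_j \qm_j^{b_j}$, but the quantum torus underlying $\qgluev(\triang)$ has $2N$ independent generators $\qZ_j,\qZ''_j$, whereas there are only $N-r$ independent edge monomials and $2r$ peripheral monomials. Edge and peripheral monomials together span a proper subalgebra; the image of $\qtr_\triang$ does not lie in it (already the knot $K_b$ in the $4_1$ example maps to a non-peripheral element). The paper instead defines $I_\triang$ on \emph{every} Weyl-ordered monomial $[\Zar^{S_0}]$, $S_0\in\BZ^{3N}$, by the shifted sum \eqref{3dIdef}; the convergence argument for generic $S_0$ (Theorem~\ref{thm.3d}) is a genuine extension of the peripheral case in \cite{GHRS} and uses the coset decomposition $Q^1_{S_0}(\triang;\BZ)/\BD$ and superadditivity of the degree function. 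Your proposal has no mechanism for assigning a value to such monomials.

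Second, the compatibility $\phi\circ\qtr_\triang=\qtr_{\triang'}$ is not ``essentially built into'' \cite{GY,PP}. For the 3--2 move this is Proposition~\ref{prop-qtr-32}, proved here by an explicit comparison of dual-surface diagrams; for the 2--0 move it is entirely new (Proposition~\ref{prop-qtr-20}) and requires a skein calculation showing that the standard arcs in the inserted annuli evaluate to $1$ modulo the edge and Lagrangian ideals. On the index side, your plan to handle the 2--0 move by ``reducing to the peripheral case and bootstrapping'' cannot work for the same reason as above: general monomials are not peripheral. The paper's argument is instead a direct application of the quadratic identity \eqref{eq-J-quad}, which collapses the sum over the extra edge $E_0$ to a Kronecker delta; this works uniformly for any input monomial $x$ via the rewriting \eqref{eq-Itriang-def}.
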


To apply the above theorem and define that map $I_M$, we need a canonical island
of 1-efficient triangulations that are connected by 2--0 and 3--2 Pachner moves.
Such an island exists for cusped hyperbolic 3-manifolds $M$, and the corresponding
triangulations are obtained from regular subdivisions of the canonical ideal cell
decomposition of $M$, after possibly adding tetrahedra to triangulate the faces
of the ideal cells. This is discussed in detail in~\cite[Sec.6]{GHRS}. This and
Theorem~\ref{thm.2} implies that for any 1-efficient triangulation $\triang$ in
that island, the composition 
\begin{equation}
\label{Icomp}
\begin{tikzcd}
  \skeinev(M) \arrow[r,"\qtr_\triang"] &
  \qgluev(\triang) \arrow[r,"I_{\triang}"] & \indexR
\end{tikzcd}
\end{equation}
is independent of $\triang$ and defines the map $I_M$ of Theorem~\ref{thm.1}.

We end this introduction with some remarks on the map~\eqref{Imap}.

\begin{remark}
The image $V^{3d}_M$ of the map~\eqref{Imap} (and consequently, of the map~\eqref{IT})
after tensoring with $\BQ$ is a $\BQ[q^{\pm 1/2}]$-module of finite rank. This follows
from the fact that the
defining formula for the 3d-index is a proper $q$-hypergeometric sum, and hence
$q$-holonomic; see~\cite{WZ} and also~\cite{GL:survey}. On the other hand, it is not
known that the quantum gluing module $\qgluev(\triang)$ has finite rank over
$\BQ(q^{1/2})$. The rank of $V^{3d}_M$ should be related to the rank of the
$\SL_2(\BC)$-Floer Homology defined by C\^{o}t\'{e}--Manolescu~\cite{CM:SL2} as well as
on the size $r$ of the matrices that appear in the holomorphic quantum modular forms
of knot complements~\cite{GZ:kashaev,GZ:qseries}. 
\end{remark}

\begin{remark}
The power series in $q^{1/2}$ that appear in the image of the map~\eqref{Imap}
are not only convergent for $|q|<1$, but also (as is lesser known) for $|q|>1$;
see Remark~\ref{rem.qnot1}. Their asymptotic expansions as $q$ approaches a root
of unity is related to the asymptotic expansion of perturbative complex Chern--Simons
theory and was studied in detail in~\cite{GZ:qseries}.
%Coupled with the supersymmetric calculations of the physicists, in a sense
%the map~\eqref{Imap} implies that the 3d-skein module detects supersymmetry. 
\end{remark}

\begin{remark}
Much like the DGG 3d-index, the map $I_\triang$ is effectively computable, and in
fact it has been computed before it was defined for the complement of the three
simplest hyperbolic knots, namely the $4_1$, $5_2$ and $(-2,3,7)$-pretzel knot
in~\cite{DGG:3knots}. Examples of computation are included in
Section~\ref{sec.compute}. 
\end{remark}

%%%%%%%%%%%%%%%%%%%%%%%%%%%%%%%%%%%%%%%%%%%%%%%%%%%%%%%%%%%%%%%%%%%%%%%%%%%%
%%%%%%%%%%%%%%%%%%%%%%%%%%%%%%%%%%%%%%%%%%%%%%%%%%%%%%%%%%%%%%%%%%%%%%%%%%%%

\section{Skein modules}
\label{sec.skein}

In this section we review the basic properties of the skein module and its
local version (i.e., the stated skein module) and its variants. Note $q^{1/2}$
in~\cite{GY} is $q^{1/8}$ here.

\subsection{Kauffman bracket skein modules}

Let $M$ be an oriented 3-manifold, possibly with boundary. The \term{Kauffman
bracket skein module}, denoted by $\skein(M)$, is the $\ground$-module spanned
by unoriented framed links in $M$ modulo the following skein relations.
\begin{equation}
\label{eq-skein}
\begin{linkdiag}
\fill[gray!20] (-0.1,0)rectangle(1.1,1);
\begin{knot}
\strand[thick] (1,1)--(0,0);
\strand[thick] (0,1)--(1,0);
\end{knot}
\end{linkdiag}
=q^{1/4}
\begin{linkdiag}
\fill[gray!20] (-0.1,0)rectangle(1.1,1);
\draw[thick] (0,0)..controls (0.5,0.5)..(0,1);
\draw[thick] (1,0)..controls (0.5,0.5)..(1,1);
\end{linkdiag}
+q^{-1/4}
\begin{linkdiag}
\fill[gray!20] (-0.1,0)rectangle(1.1,1);
\draw[thick] (0,0)..controls (0.5,0.5)..(1,0);
\draw[thick] (0,1)..controls (0.5,0.5)..(1,1);
\end{linkdiag},\qquad
\begin{linkdiag}
\fill[gray!20] (0,0)rectangle(1,1);
\draw[thick] (0.5,0.5)circle(0.3);
\end{linkdiag}
=(-q^{1/2}-q^{-1/2})
\begin{linkdiag}
\fill[gray!20] (0,0)rectangle(1,1);
\end{linkdiag}.
\end{equation}
Here, each diagram is a portion of the link inside a 3-ball with vertical framing,
and the parts of the links outside the diagrams are the same in each equation.

By inspection, the homology class of the links in $H=H_1(M;\ints/2)$ is unchanged by
the defining relations. Thus, $\skein(M)$ is $H$-graded.

Given a surface, the skein algebra $\skein(\surface)$ is defined as
$\skein(\surface\times[-1,1])$ with the product given by stacking. Our convention is
$\alpha\cup\beta$ stacks $\alpha$ above $\beta$.

We can use a surface to describe a 3-manifold by choosing a Heegaard surface
$i:\surface\embed M$. In other words, $M$ can be recovered by attaching 2- and
3-handles to $\surface\times[-1,1]$. By \cite[Proposition~2.2]{Przytycki:fund}, the
natural map $i_\ast:\skein(\surface)\to\skein(M)$ is surjective, and the kernel is
generated by handle slides. In addition, the map $i_\ast$ behaves as expected on the
homology gradings.

\subsection{Stated skein algebras}

We recall the stated skein algebras defined by L\^e \cite{Le:decomp}. A
\term{punctured bordered surface} is a surface of the form
$\surface=\surclose\setminus\marked$, where $\surclose$ is a compact oriented
surface, possibly with boundary, and $\marked$ is a finite set that intersects every
component of $\partial\surclose$. A component of $\partial\surface$ is called a
\term{boundary edge}, which is homeomorphic to an open interval.

A \term{tangle} over $\surface$ is an unoriented embedded 1-dimensional submanifold
$\alpha\subset\surface\times[-1,1]$ such that $\partial\alpha$ is in
$\partial\surface\times[-1,1]$, and that the heights (i.e. the second coordinates) of
$\partial\alpha$ are distinct over each boundary edge.

A \term{framing} of a tangle $\alpha$ over $\surface$ is a transverse vector field
along $\alpha$ which is vertical at $\partial\alpha$. Here, \term{vertical} means
tangent to $\{\ast\}\times[-1,1]$ in the positive direction. A \term{state} of a
tangle $\alpha$ is a map $\partial\alpha\to\{+,-\}$. By convention, $\pm$ are
identified with $\pm1$ when necessary. Throughout the paper, all tangles will be
framed and stated. An \term{isotopy} of tangles over $\surface$ is homotopy within
the class of tangles over $\surface$. In particular, the height order of
$\partial\alpha$ over each boundary edge of $\surface$ is preserved by isotopy. 

The \term{stated skein module} $\skein(\surface)$ is the $\ground$-module spanned
by stated and framed tangles satisfying the skein relations \eqref{eq-skein} and
following boundary relations.
\begin{align}
\label{eq-arcs}
\relarc{+}{-}&=q^{-1/8}\relempty,\qquad
\relarc{-}{+}=-q^{-5/8}\relempty,\qquad
\relarc{+}{+}=\relarc{-}{-}=0,\\
\label{eq-stex}
\begin{linkdiag}
\fill[gray!20] (0,0)rectangle(1,1);\draw[->] (1,0)--(1,1);
\draw [thick] (0,0.7)..controls(0.8,0.7) and (0.8,0.3)..(0,0.3);
\end{linkdiag}
&=q^{1/8}\relacross{-}{+}-q^{5/8}\relacross{+}{-}.
\end{align}
The same diagram conventions as before apply here. In addition, an arrow on the
boundary of the surface indicates that as one follows the direction of the arrow, the
heights of the endpoints are consecutive and increasing. Although the arrows here
follow the induced orientation on the boundary, an isotopy of the tangle can reverse
it. For example,
\begin{equation}\label{eq-marking-flip}
\relcross{\mu}{\nu}=\relacross[<-]{\nu}{\mu}.
\end{equation}

The homology grading from the last section generalizes to the stated skein algebra
using the relative homology $H_1(\surface,\partial\surface;\ints/2)$. The stated
skein algebra has an additional $\ints$-grading $d_e$ on each bound edge $e$ by the
sum of states.

Although $\skein(\surface)$ has a stacking product $\cup$ as before, for the special
class of surfaces we consider, there is a necessary modification.

A \term{boundary triangle} of a punctured bordered surface $\surface$ is a boundary
component of $\surclose$ containing 3 boundary edges, and a \term{surface with
triangular boundary} is a punctured bordered surface $\surface$ with a distinguished
set of boundary triangles.

Suppose $\surface$ with triangular boundary, we define a new product structure on
$\skein(\surface)$, which we denote by $\cdot$. It is modified from $\cup$ by a power
of $q$. The reason for this modification is to obtain the correct quantization in
Section~\ref{sec-qglue}. Without stating otherwise, we use $\cdot$ as the product for
the rest of the paper.

Let $\omega:\ints^3\otimes\ints^3\to\ints$ be the skew-symmetric bilinear form with
the matrix
\begin{equation}\label{eq-block}
\begin{pmatrix}
0&1&-1\\-1&0&1\\1&-1&0
\end{pmatrix}
\end{equation}
For each boundary triangle $E=\{e_1,e_2,e_3\}$, label the edges cyclically as
Figure~\ref{fig-b-label}. Then the gradings of a tangle $\alpha$ form a vector
$d_E(\alpha)=(d_{e_1}(\a),d_{e_2}(\a),d_{e_3}(\a)) \in\ints^3$. For tangles
$\alpha,\beta$, define a new product by
\begin{equation}
\label{eq-new-prod}
\alpha \cdot \beta=q^{-\frac{1}{8}\sum_{E}\omega(d_E(\alpha),d_E(\beta))}\alpha\cup\beta
\end{equation}
where the sum is over all boundary triangles $E$. It is easy to see that this extends
linearly to an associative product with the empty tangle as unit.

\begin{figure}[htpb!]
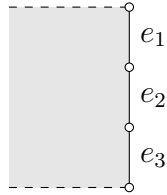

\centering
\begin{linkdiag}[3/2]
\fill[gray!20] (0,0) rectangle (2,3);
\draw[dashed] (0,0) -- (2,0) (0,3) -- (2,3);
\draw (2,0) -- (2,3);
\foreach \y in {0,1,2,3}
\draw[fill=white] (2,\y)circle(0.07);
\foreach \i in {1,2,3}
\path (2,{3.5-\i}) node[right]{$e_\i$};
\end{linkdiag}
\caption{Cyclic labeling of the boundary edges in a boundary triangle.}
\label{fig-b-label}
\end{figure}

\subsection{Corner reduction and splitting homomorphism}

We want to split surfaces into elementary pieces. In the case of punctured bordered
surface, the relation \eqref{eq-stex} introduced by L\^e in \cite{Le:decomp} is key
to make splitting along ideal arcs work. Similarly, we introduced corner reduction in
\cite{GY} to make splitting along closed curves work.

Suppose $\surface$ is a surface with triangular boundary. The \term{corner-reduced
skein module} $\skeincr(\surface)$ is the quotient of $\skein(\surface)$ by the left
ideal (using $\cdot$ product) generated by the following relations near the boundary
triangles.
\begin{equation}\label{eq-crdef}
\relcorner{-}{+}=0,\qquad
\relcorner{-}{-}=-\iunit q^{-1/4},\qquad
\relcorner{+}{+}=\iunit q^{1/4},\qquad
\relcorner{+}{-}=1.
\end{equation}
Note these relations do not preserve the homology or the state sum gradings.

We say a local relation holds at the bottom if it remains true after adding strands
above the one shown. This is slightly stronger than a left ideal since the additional
strands are allowed connect to the existing ones outside the local picture.

\begin{lemma}[{\cite[Corollary~4.13, Lemma~4.14]{GY}}]
\label{lemma-tw}
In $\skeincr(\surface)$, the following relations hold at the bottom near boundary
triangles.
\begin{align}
\label{eq-crhandle}
&&\relaround{1}&=\relaround{0}.\\
\label{eq-twup}
\relbottom{+}&=\iunit q^{\frac{1}{8}(d-d'+3)}\reltwup{-},&
\relbottom{-}&=\iunit q^{\frac{1}{8}(d'-d+3)}\reltwup{+}+q^{\frac{1}{8}(2d''-d-d'+1)}
\reltwup{-}.\\
\label{eq-twdown}
\reltwup{-}&=-\iunit q^{\frac{1}{8}(d'-d-3)}\relbottom{+},&
\reltwup{+}&=q^{\frac{1}{8}(2d''-d-d'-1)}\relbottom{+}-\iunit q^{\frac{1}{8}(d-d'-3)}
\relbottom{-}.
\end{align}
Here, $d$ and $d'$ are the gradings of the left-hand sides on the top and bottom
edges, respectively, and $d''$ is the grading on the third edge in the same boundary
triangle (not shown in the diagrams).
\end{lemma}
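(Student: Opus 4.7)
The plan is to derive these local identities in the corner-reduced skein module directly from the defining relations: the Kauffman relations \eqref{eq-skein}, the boundary relations \eqref{eq-arcs} and \eqref{eq-stex}, the corner relations \eqref{eq-crdef}, and the grading-twisted product \eqref{eq-new-prod}. Since each identity is local near a single boundary triangle, it suffices to check it in a neighborhood of that triangle, and the qualifier ``at the bottom'' just means one works in the subalgebra of strands sitting below those not depicted, so that the identity propagates when multiplied by arbitrary elements on top using $\cdot$.

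For \eqref{eq-crhandle}, my approach is to push the arc $\relaround{1}$ past each of the three markers one at a time. To do this, I would isotope a short segment of the arc to cross in front of a marker, introduce a pair of opposite crossings, and resolve them using \eqref{eq-skein} together with \eqref{eq-stex}. Each resolution produces a configuration with two corner arcs at adjacent markers; evaluating these via \eqref{eq-crdef} yields scalar coefficients and a residual arc on the other side. After doing this at each of the three markers in turn, both $\relaround{1}$ and $\relaround{0}$ reduce to the same linear combination; the non-trivial point is that the three contributions (one per marker) combine so that the $q$-powers cancel out precisely.

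For the ``up'' relations \eqref{eq-twup}, I would start from $\relbottom{+}$ (resp.\ $\relbottom{-}$) and use \eqref{eq-stex} together with a corner move across the marker on the depicted edge to rewrite the endpoint as sitting on the top half of that edge, thereby producing terms that contain corner arcs at the two incident markers. Evaluating those corner arcs via \eqref{eq-crdef} leaves a single term $\reltwup{-}$ in the $+$ case, and the asymmetry of \eqref{eq-crdef} produces the two-term combination in the $-$ case. The explicit exponents $\tfrac{1}{8}(d-d'+3)$, $\tfrac{1}{8}(d'-d+3)$, and $\tfrac{1}{8}(2d''-d-d'+1)$ arise by collecting the contributions of \eqref{eq-new-prod}, where the grading vector of the arc on each edge of the boundary triangle shifts by $\pm 1$ when converting a bottom to a twup, and the skew form $\omega$ from \eqref{eq-block} turns the shift into a $q$-power. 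The ``down'' relations \eqref{eq-twdown} then follow by inverting the $2\times 2$ linear system encoded in \eqref{eq-twup}, or alternatively by repeating the argument in reverse.

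The main obstacle is not conceptual but bookkeeping: at every step one must carefully track (i) the change in the state-sum gradings $d_e$ on each of the three edges of the boundary triangle, (ii) the induced $q$-power $q^{-\tfrac{1}{8}\omega(d_E(\alpha),d_E(\beta))}$ that appears whenever strands are combined using $\cdot$ instead of $\cup$, and (iii) the fixed coefficients from \eqref{eq-arcs}, \eqref{eq-stex}, and \eqref{eq-crdef}. Once the cyclic labeling of Figure~\ref{fig-b-label} and the sign of $\omega$ are fixed, verifying that the collected exponents match those in the statement is a finite check. This is essentially the computation carried out in Corollary~4.13 and Lemma~4.14 of~\cite{GY}, and the present statement is obtained by importing those results and adapting the conventions (in particular the change $q^{1/2}\leftrightarrow q^{1/8}$ noted at the start of Section~\ref{sec.skein}).
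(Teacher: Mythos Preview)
The paper does not prove this lemma at all: it is stated with the citation \cite[Corollary~4.13, Lemma~4.14]{GY} and simply imported as a black box. So there is no ``paper's own proof'' to compare against; your proposal is a sketch of how the original argument in \cite{GY} presumably goes, and you acknowledge as much in your final paragraph.

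As a sketch, your outline is plausible in spirit but too vague to count as a proof. In particular, your description of \eqref{eq-crhandle} (``push the arc past each of the three markers one at a time'' by introducing and resolving crossings) is not quite right as stated: the two diagrams differ by passing a strand over versus under the entire boundary triangle, and the natural route is to use \eqref{eq-stex} to split the strand into a state sum of pairs of arcs ending on the boundary triangle, then kill the cross terms and match the diagonal terms using the corner relations \eqref{eq-crdef}. For \eqref{eq-twup} and \eqref{eq-twdown} your plan is essentially correct --- create a corner arc at the marker using an isotopy and \eqref{eq-stex}, then evaluate it with \eqref{eq-crdef}, and track the $\cdot$-product correction from \eqref{eq-new-prod} --- but you have not actually carried out the bookkeeping that produces the specific exponents, and that bookkeeping is the entire content of the lemma. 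Since the paper only cites the result, the honest thing to do here is exactly what you did at the end: point to \cite{GY} and note the convention change $q^{1/2}\leftrightarrow q^{1/8}$.
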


Let $\surface$ be a surface with triangular boundary. Given a closed curve $c$ in the
interior of the surface and 3 distinguished points $p_1,p_2,p_3$ on $c$, we can
remove the 3 points from $\surface$ and split $\surface$ along $c$ to obtain a new
surface $\surface_c$ with triangular boundary. Let $p:\surface_c\to\surface$ be the
gluing map.

Let $\alpha$ be a tangle diagram on $\surface$. We can isotope $\alpha$ to be
disjoint from the points $p_i$ and transverse to the curve $c$. If we simply split
the diagram, then the new boundary edges are missing height orders and states. To
resolve the height, we choose an orientation for each arc between the points $p_i$
on $c$, and use the lift of the orientation as the height orders for $\surface_c$.
The states are simply summed.
\begin{equation}
\label{eq-split}
\cut_c(\alpha)=\sum_{s:\alpha\cap c\to\{\pm\}}(\alpha,s).
\end{equation}
Here, $(\alpha,s)$ means for each $x\in\alpha\cap c$, we assign $s(x)$ to the two
endpoints of the split diagram in $p^{-1}(x)$. As an example,
\begin{equation}\label{eq-split-example}
\cut_c\Bigg(
\begin{tikzpicture}[baseline=(ref.base)]
\fill[gray!20] (0,0) rectangle (2,1);
\draw[dashed] (1,0) -- (1,1);
\draw[fill=white] (1,0)circle(0.05) node[below]{\small$p_1$} (1,1)circle(0.05)
node[above]{\small$p_2$};
\draw[thick] (0,0.7) -- (2,0.7) (0,0.3) -- (2,0.3);
\node (ref) at (1,0.5) {\phantom{$-$}};
\end{tikzpicture}
\Bigg)
=\sum_{\mu,\nu=\pm}
\begin{tikzpicture}[baseline=(ref.base)]
\fill[gray!20] (0,0) rectangle (1,1);
\draw (1,0) -- (1,1);
\path[tips,->] (1,0) -- (1,0.95);
\draw[fill=white] (1,0)circle(0.05) (1,1)circle(0.05);
\draw[thick] (0,0.7) -- (1,0.7)\stnode{\mu} (0,0.3) -- (1,0.3)\stnode{\nu};
\node (ref) at (1,0.5) {\phantom{$-$}};
\end{tikzpicture}
\begin{tikzpicture}[baseline=(ref.base)]
\fill[gray!20] (1,0) rectangle (2,1);
\draw (1,0) -- (1,1);
\path[tips,->] (1,0) -- (1,0.95);
\draw[fill=white] (1,0)circle(0.05) (1,1)circle(0.05);
\draw[thick] (1,0.7)\stnodel{\mu} -- (2,0.7) (1,0.3)\stnodel{\nu} -- (2,0.3);
\node (ref) at (1,0.5) {\phantom{$-$}};
\end{tikzpicture} \,.
\end{equation}

\begin{theorem}[{\cite[Theorem~4.16]{GY}}]
$\cut_c$ is a well-defined homomorphism $\skeincr(\surface)\to\skeincr(\surface_c)$.
\end{theorem}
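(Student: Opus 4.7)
The plan is to proceed in three stages: first, verify that the map $\cut_c$ defined on framed stated tangle diagrams descends to a well-defined $\ground$-linear map on isotopy classes of tangles over $\surface$; second, check that it respects the defining relations of $\skeincr(\surface)$; and third, verify the multiplicative property with respect to the modified product $\cdot$.

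For well-definedness on isotopy classes, the only nontrivial moves to analyze are isotopies that interact with the curve $c$ or the marked points $p_1, p_2, p_3$. An isotopy pushing an arc of the tangle across a portion of $c$ away from the $p_i$ is handled by the state sum \eqref{eq-split} together with the crossing relation \eqref{eq-stex} applied at the newly created boundary edges: this is the content of L\^e's splitting theorem in the punctured bordered surface setting and goes through verbatim. The subtle case is an isotopy of a strand past one of the $p_i$: after splitting, the strand can be made to end either just above or just below the new corner at $p_i$, and these two presentations are identified in $\skeincr(\surface_c)$ precisely by the twist-up and twist-down identities \eqref{eq-twup} and \eqref{eq-twdown} of Lemma~\ref{lemma-tw}, with the accompanying state sum reproducing the required combinations of states. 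Handle slides looping a $p_i$ are absorbed by \eqref{eq-crhandle}.

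For compatibility with relations, I would check that the interior skein relations \eqref{eq-skein}, the arc relations \eqref{eq-arcs}, the crossing relation \eqref{eq-stex} at original boundary edges, and the corner relations \eqref{eq-crdef} at original boundary triangles are all sent by $\cut_c$ to relations in $\skeincr(\surface_c)$. For each, the defining 3-ball can be isotoped away from $c$ and from the $p_i$, so $\cut_c$ acts as the identity on the local picture while summing states trivially in the exterior. Because the corner reductions \eqref{eq-crdef} are imposed as a left ideal, additional strands may lie below the local picture; these are likewise pushed off $c$, so the left-ideal property is preserved under $\cut_c$. For the homomorphism property, one has $\cut_c(\alpha \cup \beta) = \cut_c(\alpha) \cup \cut_c(\beta)$ at the level of diagrams, once the state sum distributes independently over each intersection with $c$. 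Comparing the prefactors in \eqref{eq-new-prod}, the contribution from the original boundary triangles of $\surface$ agrees on both sides since $\cut_c$ preserves the gradings $d_e$ on each original edge. The two new boundary triangles of $\surface_c$, one on each side of $c$, produce additional correction factors that cancel: the two new triangles inherit opposite cyclic orderings of their edges because the induced orientation of $\partial\surface_c$ is reversed across $c$, and the antisymmetric form $\omega$ defined by \eqref{eq-block} changes sign under reversal of the cyclic order on $\{e_1, e_2, e_3\}$. Since the state gradings on corresponding edges of the two new triangles agree (identical endpoint sums on the two sides), the two $\omega(d_E(\cut_c(\alpha)), d_E(\cut_c(\beta)))$ contributions are exact negatives and cancel.

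The technical heart, and the main obstacle, lies in the second case of the well-definedness check: the twist-down relation \eqref{eq-twdown} for the $-$ state is a two-term identity, so matching an isotopy of a strand across $p_i$ requires tracking the precise $q$-powers appearing in \eqref{eq-twup} and \eqref{eq-twdown} against the grading vector $(d, d', d'')$ induced by the states on the three edges of each new triangle, and confirming that these powers exactly reproduce the sums over states produced by \eqref{eq-split} on both sides of the isotopy. This careful bookkeeping, combined with the skew-symmetry cancellation for the product, is what makes the theorem go through.
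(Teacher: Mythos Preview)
The paper does not prove this theorem; it is quoted verbatim from \cite[Theorem~4.16]{GY} and used as a black box. So there is no in-paper argument to compare against, and your sketch is essentially a reconstruction of what the cited proof must contain. Your outline is sound: well-definedness under isotopies across $c$ reduces to L\^e's ideal-arc splitting after the three punctures are removed, and the genuinely new ingredient---invariance under pushing a strand past one of the $p_i$---is exactly what the corner-reduction relations \eqref{eq-crdef} and the twist identities of Lemma~\ref{lemma-tw} are designed to provide. Your identification of this as the technical heart is correct.

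One caution on your third stage: $\skeincr(\surface)$ is the quotient of $\skein(\surface)$ by a \emph{left} ideal, so it does not carry an algebra structure in general, and ``homomorphism'' in the statement should be read as $\ground$-linear map rather than ring map. Your cancellation argument for the $\omega$-prefactors on the two new boundary triangles is correct and does show that the underlying map $\skein(\surface')\to\skein(\surface_c)$ (with $\surface'=\surface\setminus\{p_1,p_2,p_3\}$) respects the $\cdot$-product, but on the corner-reduced level the product is not available on both sides. What survives, and what the paper actually uses downstream, is that $\skeincr(\surface_c)$ is a comodule over $\skeincr(\annulus)$ via a further splitting; your multiplicativity check is the algebra-level shadow of that comodule compatibility.
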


Note when $\surface_c=\surface_1\sqcup\surface_2$ is disconnected,
$\skeincr(\surface_c)$ naturally splits as a tensor product, so the splitting
homomorphism takes the form
\begin{equation}
\skeincr(\surface)\to\skeincr(\surface_1)\otimes\skeincr(\surface_2).
\end{equation}

\subsection{Coalgebra of the annulus}

In addition to being able to split a surface into elementary pieces, another
important use of splitting is to localize complex calculations. Let $\annulus$ denote
the annulus with 3 punctures on both boundary components, so $\annulus$ has
triangular boundary. Choose a simple arc from one side to the other as the standard
arc. When $\annulus$ is split along its core, both components are identified with
$\annulus$ using the standard arc. This splitting homomorphism defines a
comultiplication
\begin{equation}
\Delta:\skeincr(\annulus)\to\skeincr(\annulus)\otimes\skeincr(\annulus).
\end{equation}
Suppose $\surface$ is a surface with triangular boundary. Then similarly, the
splitting homomorphism along a curve parallel to a boundary triangle makes
$\skeincr(\surface)$ a $\skeincr(\annulus)$-comodule.

This is useful because $\skeincr(\annulus)$ also has a counit
\begin{equation}
\epsilon:\skeincr(\annulus)\to\ground
\end{equation}
by \cite[Corollary~4.19]{GY}. This implies that the coaction followed by the counit
is identity. In particular, we can isotope the complicated part of a diagram on
$\surface$ in a neighborhood of a boundary triangle and simplify the diagram using
this strategy. To describe the counit, we use \cite[Theorem~4.18]{GY}, which shows
that there is a spanning set consisting of diagrams that are parallel copies of the
standard arc with some fixed height orders. For the diagram $\alpha$ in
Figure~\ref{fig-std-ann},
\begin{equation}
\epsilon(\alpha)=\begin{cases}
1,&\mu_i=\nu_i\text{ for all $i$},\\0,&\text{otherwise}.
\end{cases}
\end{equation}
Note it is crucial that the arrows are parallel in Figure~\ref{fig-std-ann}.

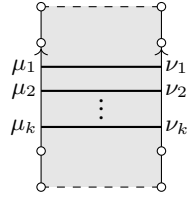
\begin{figure}[htpb!]
\centering
\begin{tikzpicture}[scale=0.8,baseline=(ref.base)]
\fill[gray!20] (0,0) rectangle (2,3);
\draw[dashed] (0,0) -- (2,0) (0,3) -- (2,3);
\draw (0,0) -- (0,3) (2,0) -- (2,3);
\foreach \y in {0,0.6,2.4,3}
\draw[fill=white] (0,\y)circle(0.07) (2,\y)circle(0.07);
\draw[thick] (0,2)\stnodel{\mu_1} -- (2,2)\stnode{\nu_1} (0,1.6)
\stnodel{\mu_2} -- (2,1.6)\stnode{\nu_2} (0,1)\stnodel{\mu_k} -- (2,1)\stnode{\nu_k};
\path (1,1.3)node[rotate=90]{...};
\path[tips,->] (0,0) -- (0,2.3);
\path[tips,->] (2,0) -- (2,2.3);
\end{tikzpicture}
\caption{Diagram $\alpha$ in the annulus $\annulus$.}
\label{fig-std-ann}
\end{figure}

\subsection{Even skein modules}
\label{sec-even-skein}

In this section we recall the properties of the even skein modules. As before,
we fix a compact oriented 3-manifold $M$. The skein module $\skein(M)$ is
graded by the homology $H_1(M;\ints/2)$. The degree 0 part is a version of the even
skein module, but by this definition, the coefficients are still in $\ground$, same
as $\skein(M)$. We can arrange the coefficients to be in $\groundev$ by introducing
more grading restrictions.

As a $\groundev$-module, $\ground$ has a $(\ints/2\times\ints/4)$-grading using
powers of $\iunit$ and $q^{1/8}$. This makes $\groundev$ the degree 0 part of
$\ground$. We identify the counterpart of this grading on framed links.

Note the defining relations \eqref{eq-skein} do not have $\iunit$ or $q^{1/8}$, so we
only need to take care of $q^{1/4}$. This is solved by the framing, since a
Reidemeister I move (adding a full twist in framing) results in a factor of
$-q^{\pm3/4}$. \cite{Bar} describes a $\ints/2$ assignment $\Sigma\mathrm{Spin}$,
abbreviated here by $\sigma$, to framed links depending on a spin structure on $M$.
In particular, Seifert framed knots in a ball have $\sigma=0$. When restricted to
links with even homology, $\sigma$ is independent of the spin structure as well.
Thus, we can define the $(\iunit,q^{1/8})$-degree of a framed link as $(0,2\sigma)$.
Then the even skein module $\skeinev(M)$ is defined as the $\groundev$-submodule
where all gradings are zero, and \eqref{eq-skein} can be restricted to $\skeinev(M)$
by \cite{Bar}.

Now suppose $\surface$ is a punctured bordered surface. The even skein algebra
$\skeinev(\surface)$ similarly requires zero homology class in
$H_1(\surface,\partial\surface;\ints/2)$, but we also need to describe framing and
state requirements for tangles. Let $\alpha$ be a tangle over $\surface$ with even
homology. Then on each boundary edge, the number of endpoints is even. By an isotopy,
we require the diagram near the boundary edge have positive height order, i.e.\ like
the order in the defining relation \eqref{eq-stex}. We can pair up the endpoints and
connect like \eqref{eq-stex} without introducing new crossings. Let $\bar{\alpha}$ be
a link obtained this way. Then we define $\sigma(\alpha)=\sigma(\bar{\alpha})$. Again
using the argument of \cite{Bar}, $\sigma(\alpha)$ is independent of how
$\bar{\alpha}$ is connected. Define two more quantities: $2n(\alpha)$ is the number
of endpoints of $\alpha$, and $2d(\alpha)$ is the sum of all states of $\alpha$. $n$
and $d$ are both integers when $\alpha$ has even homology. Now define the
$(\iunit,q^{1/8})$-degree of $\alpha$ as $(d(\alpha),2\sigma(\alpha)-n(\alpha))$.
Then the even skein algebra $\skeinev(\surface)$ is again defined as the
$\groundev$-submodule where all gradings are zero. More explicitly,
\begin{equation}\label{eq-skeinev-def}
\skeinev(\surface)=\Span_{\groundev}
\{\iunit^{d(\alpha)}q^{n(\alpha)/8+\sigma(\alpha)/4}\alpha\mid
\alpha\text{ has even homology}\}.
\end{equation}
It is easy to check that the defining relations \eqref{eq-skein}--\eqref{eq-stex}
restricts to $\skeinev(M)$.

If $\surface$ is a dual surface of the 3-manifold $M$, then the natural map
$\skein(\surface)\onto\skein(M)$ is still surjective on the even part $\skeinev$.
Although the $\surface$-diagram of an even link in $M$ may not have even homology,
the difference in homology is generated by $A,B$-circles, so handle slides bring the
diagram to even.

Finally, we deal with the corner reduction. Since the gradings above do not easily
descend to $\skeincr(\surface)$, we define the even part $\skeincrev(\surface)$
simply as the image of $\skeinev(\surface)$ in the quotient
$\skein(\surface)\onto\skeincr(\surface)$.

\begin{lemma}\label{lemma-split-ev}
Let $\surface_c$ be $\surface$ split along a closed curve $c$. Then the image of
$\skeincrev(\surface)$ under the splitting
$\skeincr(\surface)\to\skeincr(\surface_c)$ is in the even part
$\skeincrev(\surface_c)$.
\end{lemma}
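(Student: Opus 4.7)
By $\ground$-linearity of $\cut_c$ and the definition $\skeincrev(\surface) = \pi(\skeinev(\surface))$, where $\pi$ is the quotient $\skein\onto\skeincr$, it suffices to show that for every generator $y = \iunit^{d(\alpha)}\,q^{n(\alpha)/8+\sigma(\alpha)/4}\,\alpha$ of $\skeinev(\surface)$, with $\alpha$ an even-homology tangle over $\surface$, the element $\cut_c(y)\in\skein(\surface_c)$ already lies in $\skeinev(\surface_c)$; then its image in $\skeincr(\surface_c)$ lies in $\skeincrev(\surface_c)$ by definition. Using the splitting formula \eqref{eq-split}, $\cut_c(y) = \iunit^{d(\alpha)}\,q^{n(\alpha)/8+\sigma(\alpha)/4}\,\sum_s(\alpha,s)$, and each summand $(\alpha,s)$ shares the same underlying tangle $\alpha'$ in $\surface_c$, differing only in state labels on the $2|\alpha\cap c|$ lifted endpoints.

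First I will check that $\alpha'$ has even homology in $\surface_c$. If $\Sigma\subset\surface$ is a mod-$2$ relative $2$-chain with $\partial\Sigma - \alpha$ supported on $\partial\surface$, then its pullback $\Sigma' = p^{-1}(\Sigma)$ along the gluing map $p:\surface_c\to\surface$ satisfies $\partial\Sigma' - \alpha' \subset\partial\surface_c$, since the extra boundary of $\Sigma'$ created along $c$ lands on the new split edges of $\surface_c$. The mod-$2$ intersection pairing of $[\alpha]=0$ with $[c]$ then yields $|\alpha\cap c|\equiv 0\pmod 2$. Next I compute the grading invariants across splitting: $d((\alpha,s)) = d(\alpha) + \sum_{x\in\alpha\cap c} s(x)$, $n(\alpha') = n(\alpha) + |\alpha\cap c|$, and $\sigma(\alpha,s) = \sigma(\alpha')$ is common to all states. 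Setting $G_s := \iunit^{d((\alpha,s))}\,q^{n(\alpha')/8+\sigma(\alpha')/4}\,(\alpha,s)$, the canonical $\skeinev(\surface_c)$-generator at $(\alpha,s)$, I can rewrite each summand as
\begin{equation*}
\iunit^{d(\alpha)}\,q^{n(\alpha)/8+\sigma(\alpha)/4}\,(\alpha,s) \;=\; c_s\,G_s, \qquad c_s \;=\; \iunit^{-\sum_x s(x)}\,q^{-|\alpha\cap c|/8 + (\sigma(\alpha)-\sigma(\alpha'))/4}.
\end{equation*}

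What remains is to show each $c_s\in\groundev = \BZ[q^{\pm 1/2}]$. The $\iunit$-factor is $\pm 1$ because $\sum_x s(x)\equiv|\alpha\cap c|\equiv 0\pmod 2$. The $q$-factor lies in $\BZ[q^{\pm 1/2}]$ exactly when
\begin{equation*}
\sigma(\alpha)+\sigma(\alpha') \;\equiv\; |\alpha\cap c|/2 \pmod 2, \qquad (\ast)
\end{equation*}
and this is the main non-trivial input. To establish $(\ast)$, I close $\alpha'$ on every boundary edge of $\surface_c$ by U-arcs modeled on \eqref{eq-stex}; under the gluing map $p$ the U-arcs on each matched pair of split edges $c_i^L,c_i^R$ combine into a single null-homotopic framed loop in $\surface\times[-1,1]$ straddling the segment of $c_i$ between two paired intersection points, giving $|\alpha\cap c|/2$ such ``bigon'' loops so that $p_\ast\bar{\alpha'} = \bar\alpha \cup (\text{bigons})$. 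Invoking the properties of $\sigma=\Sigma\mathrm{Spin}$ from \cite{Bar}, a local Spin-cobordism computation shows that each bigon contributes $1\pmod 2$ to $\sigma$, while pairwise linkings between bigons and with $\bar\alpha$ contribute evenly under the even-homology hypothesis, producing $(\ast)$.

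The hardest step is precisely the verification of $(\ast)$: unlike the homology and state-sum bookkeeping, which is a direct chain-level check, tracking how $\Sigma\mathrm{Spin}$ changes under the gluing map requires identifying the bigons as the only $\sigma$-nontrivial pieces introduced by splitting and applying the local formula for $\Sigma\mathrm{Spin}$ to each one. Once $(\ast)$ is in hand, $\cut_c(y) = \sum_s c_s\,G_s$ lies in $\skeinev(\surface_c)$, hence its image in $\skeincr(\surface_c)$ lies in $\skeincrev(\surface_c)$, completing the proof.
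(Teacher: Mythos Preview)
Your setup and the treatment of the homology grading and the $\iunit$-grading $d$ are fine and parallel the paper's argument. The gap is in your verification of $(\ast)$.

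The topological claim $p_\ast\bar{\alpha'} = \bar\alpha \cup (\text{bigons})$ is false. The U-arcs you attach at the split edges are joined to the cut strands of $\alpha'$; under the gluing map $p$ the left and right U-arcs do not detach from those strands to form separate closed loops. Instead, $p_\ast\bar{\alpha'}$ is obtained from $\bar\alpha$ by a surgery along $c$: at each paired intersection the strands of $\bar\alpha$ are cut and rerouted through the U-arcs on either side. This is in general a different link from $\bar\alpha\sqcup(\text{unknots})$, so your ``each bigon contributes $1$'' bookkeeping does not apply, and the appeal to a ``local Spin-cobordism computation'' is not a proof.

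What you are missing is the mechanism the paper uses. The splitting map \eqref{eq-split} produces \emph{opposite} height orders on the two new edges over each arc of $c$ (see \eqref{eq-split-example}). Since the definition of $\sigma$ via closure requires first isotoping every boundary edge to positive height order, one of the two sides must receive a half-twist before closing. After closing with nested U-arcs, that half-twist simplifies (by Reidemeister~II moves) to exactly one Reidemeister~I kink per pair of endpoints on that edge. Summing over all arcs of $c$ gives $|\alpha\cap c|/2$ kinks, which is the change in $\sigma$; this exactly cancels the change $|\alpha\cap c|$ in $n$, so the $q^{1/8}$-degree $2\sigma-n$ is preserved. Your argument never engages with this height-order asymmetry, which is the actual source of the $|\alpha\cap c|/2$ in $(\ast)$.
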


\begin{proof}
Recall 3 points are removed from $c$ to make $\surface_c$. Let $\surface'$ be
$\surface$ minus these 3 points. Then $\surface_c$ is $\surface'$ split along 3 ideal
arcs. By the same handle slide argument as above, a tangle with even homology can be
isotoped so that it is even on $\surface'$. Then it is easy to see that the split
diagram has even homology on $\surface_c$.

Next we consider the $\iunit$-degree $d$. By definition, the change in $d$ after
splitting comes from the extra states created by the splitting. However, the matching
states of splitting and the even intersection with $c$ shows that the change in $d$
is even.

Finally, we deal with the $q^{1/8}$-degree $2\sigma-n$. When done with diagrams, the
height order on each pair of new boundary edges have opposite orientations. See
\eqref{eq-split-example}. This means an isotopy is required to twist the side with
negative order to positive. After pairing the endpoints and connecting them, the
twist from the negative side becomes a Reidemeister I move for each pair of
endpoints, as shown in the following example.
\begin{equation}
\begin{tikzpicture}[baseline=(ref.base)]
\fill[gray!20] (0,0) rectangle (1,1.5);
\draw (1,0) -- (1,1.5);
\path[tips,->] (1,0) -- (1,1.45);
\draw[fill=white] (1,0)circle(0.05) (1,1.5)circle(0.05);
\draw[thick] foreach \y in {0.3,0.6,0.9,1.2} {(0,\y) -- (1,\y)};
\node (ref) at (1,0.75) {\phantom{$-$}};
\end{tikzpicture}
\begin{tikzpicture}[baseline=(ref.base)]
\fill[gray!20] (1,0) rectangle (2,1.5);
\draw (1,0) -- (1,1.5);
\path[tips,->] (1,0) -- (1,1.45);
\draw[fill=white] (1,0)circle(0.05) (1,1.5)circle(0.05);
\draw[thick] foreach \y in {0.3,0.6,0.9,1.2} {(1,\y) -- (2,\y)};
\node (ref) at (1,0.75) {\phantom{$-$}};
\end{tikzpicture}
\quad=\quad
\begin{tikzpicture}[baseline=(ref.base)]
\fill[gray!20] (0,0) rectangle (1,1.5);
\draw (1,0) -- (1,1.5);
\path[tips,->] (1,0) -- (1,1.45);
\draw[fill=white] (1,0)circle(0.05) (1,1.5)circle(0.05);
\draw[thick] foreach \y in {0.3,0.6,0.9,1.2} {(0,\y) -- (1,\y)};
\node (ref) at (1,0.75) {\phantom{$-$}};
\end{tikzpicture}
\begin{tikzpicture}[baseline=(ref.base)]
\fill[gray!20] (1,0) rectangle (2,1.5);
\draw (1,0) -- (1,1.5);
\path[tips,->] (1,1.5) -- (1,0.05);
\draw[fill=white] (1,0)circle(0.05) (1,1.5)circle(0.05);
\begin{knot}[background color=gray!20,clip width=5,end tolerance=5pt,clip radius=0.1cm]
\strand[thick] (2,1.2)..controls +(-1,0) and +(0.4,0)..(1,0.3);
\strand[thick] (2,0.9)..controls +(-0.6,0) and +(0.4,0)..(1,0.6);
\strand[thick] (2,0.6)..controls +(-0.4,0) and +(0.6,0)..(1,0.9);
\strand[thick] (2,0.3)..controls +(-0.2,0) and +(0.8,0)..(1,1.2);
\end{knot}
\node (ref) at (1,0.75) {\phantom{$-$}};
\end{tikzpicture}
\quad\to\quad
\begin{tikzpicture}[baseline=(ref.base)]
\fill[gray!20] (0,0) rectangle (1,1.5);
\draw (1,0) -- (1,1.5);
\path[tips,->] (1,0) -- (1,1.45);
\draw[fill=white] (1,0)circle(0.05) (1,1.5)circle(0.05);
\draw[thick] foreach \y in {0.3,0.9} {(0,\y)..controls +(1,0) and +(1,0)..+(0,0.3)};
\node (ref) at (1,0.75) {\phantom{$-$}};
\end{tikzpicture}
\begin{tikzpicture}[baseline=(ref.base)]
\fill[gray!20] (0.5,0) rectangle (2,1.5);
\draw (0.5,0) -- (0.5,1.5);
\path[tips,->] (0.5,1.5) -- (0.5,0.05);
\draw[fill=white] (0.5,0)circle(0.05) (0.5,1.5)circle(0.05);
\begin{knot}[background color=gray!20,clip width=5,end tolerance=5pt,clip radius=0.1cm]
  \strand[thick] (2,1.2)..controls +(-1,0)
  and +(0.4,0)..(1,0.3)..controls +(-0.4,0) and +(-0.4,0)..(1,0.6);
\strand[thick] (2,0.9)..controls +(-0.6,0) and +(0.4,0)..(1,0.6);
\strand[thick] (2,0.6)..controls +(-0.4,0)
and +(0.6,0)..(1,0.9)..controls +(-0.4,0) and +(-0.4,0)..(1,1.2);
\strand[thick] (2,0.3)..controls +(-0.2,0) and +(0.8,0)..(1,1.2);
\end{knot}
\node (ref) at (1,0.75) {\phantom{$-$}};
\end{tikzpicture}
\end{equation}
Thus, the change in the framing and the number of endpoints cancel to preserve the
$q^{1/8}$ degree.
\end{proof}

\begin{lemma}
\label{lemma-counit-ev}
$\skeincrev(\annulus)$ is the $\groundev$-span of diagrams of the form
Figure~\ref{fig-std-ann} with even number of components. As a result, the image of
$\skeincrev(\annulus)$ under the counit $\epsilon:\skeincr(\annulus)\to\ground$ is
$\groundev$.
\end{lemma}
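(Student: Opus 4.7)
The plan is to leverage the spanning result of~\cite[Theorem~4.18]{GY}, which identifies $\skeincr(\annulus)$ as the $\ground$-span of the standard arc diagrams $\alpha_{\vec{\mu},\vec{\nu}}$ from Figure~\ref{fig-std-ann}, and to compare this with the grading data used to define $\skeinev$ in~\eqref{eq-skeinev-def}. The whole statement will reduce to pinning down which $\ground$-combinations of these $\alpha_{\vec{\mu},\vec{\nu}}$ actually arise from elements of $\skeinev(\annulus)$, and then reading off the counit.

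First, I would verify the homology bookkeeping: a standard diagram with $k$ parallel arcs represents the class $k[\gamma]\in H_1(\annulus,\partial\annulus;\ints/2)$, where $[\gamma]$ is the nontrivial class of a single arc. Since every defining skein relation as well as the corner relations \eqref{eq-crdef} preserve mod-$2$ homology (by inspection), reducing any element of $\skeinev(\annulus)$ to the standard-diagram basis can only produce even-$k$ terms. Hence $\skeincrev(\annulus)$ lies in the $\ground$-span of the even-component standard diagrams.

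Next I would refine this to a $\groundev$-span by a degree-tracking argument modeled on the proof of Lemma~\ref{lemma-split-ev}. The normalization scalar $\iunit^{d(\alpha)}q^{n(\alpha)/8+\sigma(\alpha)/4}$ appearing in~\eqref{eq-skeinev-def} is designed precisely to produce $(\iunit,q^{1/8})$-degree zero. For an even-$k$ standard diagram $\alpha_{\vec{\mu},\vec{\nu}}$ one reads off $n=k$ and $2d=\sum\mu_i+\sum\nu_i$, and computes $\sigma$ via the framed link obtained by capping the standard diagram with the height-order-preserving caps of~\eqref{eq-stex}. The normalized diagram then lies in $\skeinev(\annulus)$, so its image in $\skeincr(\annulus)$, up to a unit in $\groundev$, recovers $\alpha_{\vec{\mu},\vec{\nu}}$; combined with the previous paragraph, this gives equality of $\skeincrev(\annulus)$ with the $\groundev$-span of the even-$k$ standard diagrams. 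The main technical obstacle is controlling the $(\iunit,q^{1/8})$-grading along a reduction to standard form: the boundary relations~\eqref{eq-arcs}--\eqref{eq-stex} and especially the corner-reduction relations \eqref{eq-crdef} are not grading-homogeneous (they scale by $\pm 1,\pm\iunit q^{\pm 1/4}$), but one verifies, exactly as in Lemma~\ref{lemma-split-ev}, that on an element of even homology every such reduction step shifts the prefactor by an element of $\groundev^{\times}$, since the parities of $n$, $d$, and $\sigma$ conspire to match.

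Finally, the counit statement follows at once. Since $\epsilon(\alpha_{\vec{\mu},\vec{\nu}})\in\{0,1\}\subset\groundev$ for every standard diagram, applying $\epsilon$ to a $\groundev$-linear combination of even-$k$ standard diagrams yields an element of $\groundev$, so $\epsilon(\skeincrev(\annulus))\subseteq\groundev$; and the empty diagram ($k=0$) already lies in $\skeincrev(\annulus)$ with $\epsilon(\emptyset)=1$, forcing equality.
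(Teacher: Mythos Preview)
Your overall strategy matches the paper's, but there is a genuine gap in paragraph~2. You claim that the corner-reduction relations~\eqref{eq-crdef} preserve mod-$2$ homology ``by inspection''. They do not: a corner arc has its endpoints on two \emph{distinct} boundary edges $e_i,e_j$ of $\annulus$, so under the connecting homomorphism $H_1(\annulus,\partial\annulus;\ints/2)\to H_0(\partial\annulus;\ints/2)$ it maps to $[e_i]+[e_j]\neq 0$ and hence represents a nontrivial relative class. Replacing such an arc by a scalar therefore changes the mod-$2$ homology. The derived moves of Lemma~\ref{lemma-tw} used in the reduction to standard form (via \cite[Appendix~A]{GY}) inherit this defect, so you cannot conclude that only even-$k$ terms appear by treating one move at a time.

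The paper's proof supplies exactly this missing idea: on a diagram of even homology, whenever one of the homology-changing moves~\eqref{eq-twup}--\eqref{eq-twdown} applies, the evenness forces a second move of the same type to apply immediately afterward, and one checks by explicit calculation (the paper gives one worked example) that each such \emph{pair} of moves preserves both the homology class and the full $(\iunit,q^{1/8})$-grading. Your appeal to ``exactly as in Lemma~\ref{lemma-split-ev}'' does not cover this, since that lemma treats the splitting map rather than corner reduction. A smaller issue in your reverse-inclusion step: the normalization scalar $\iunit^{d}q^{n/8+\sigma/4}$ attached to an even-$k$ standard diagram need not lie in $\groundev^\times$ (for instance $k=2$ with states $+,+$ on one side and $+,-$ on the other gives $d=1$, hence a stray factor of $\iunit$), so showing that the raw $\alpha_{\vec\mu,\vec\nu}$ lies in $\skeincrev(\annulus)$ also ultimately requires the pairing argument rather than a direct normalization.
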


\begin{proof}
The second part follows from the first since the counit of Figure~\ref{fig-std-ann}
is $0$ or $1$, so we focus on the first part.

\cite[Appendix~A]{GY} described how to reduce tangle diagrams on $\annulus$. By
applying the defining relations \eqref{eq-skein}--\eqref{eq-stex} and the relations
from Lemma~\ref{lemma-tw} in a specific direction, any diagram can be reduced to the
form in Figure~\ref{fig-std-ann}. As mentioned before, the defining relations
preserve all gradings as mentioned before.

The rest of the relations change the homology, but the evenness of the original
diagram implies that if any of them applies to the diagram, then another relation of
the same type also applies immediately afterwards. Therefore, we can combine these
relations in a way the preserves the homology class. After careful calculations with
the coefficients, we see that the combinations also preserve all gradings. Here we
give one example.
\begin{equation}
\begin{linkdiag}[0.8]
\fill[gray!20] (0,0) rectangle (1,1.6); \draw (1,0) -- (1,1.6);
\draw[fill=white] (1,0)circle(0.07) (1,0.8)circle(0.07);
\draw[thick] (0,1.1) -- +(1,0) \stnode{-} (0,1.4) -- +(1,0) \stnode{-};
\end{linkdiag}
=-\iunit q^{\frac{1}{8}(d'-d-3)}
\begin{linkdiag}[0.8]
\fill[gray!20] (0,0) rectangle (1,1.6); \draw (1,0) -- (1,1.6);
\draw[fill=white] (1,0)circle(0.07) (1,0.8)circle(0.07);
\draw[thick] (0,1.4) -- +(1,0) \stnode{-} (0,1.1)..controls+(0.5,0)
and +(-0.5,0)..(1,0.5)\stnode{+};
\end{linkdiag}
=(-\iunit q^{\frac{1}{8}(d'-d-3)})(-\iunit q^{\frac{1}{8}((d'+1)-(d+1)-3)})
\begin{linkdiag}[0.8]
\fill[gray!20] (0,0) rectangle (1,1.6); \draw (1,0) -- (1,1.6);
\draw[fill=white] (1,0)circle(0.07) (1,0.8)circle(0.07);
\begin{knot}[clip radius=0.2cm]
\strand[thick] (0,1.1)..controls+(0.5,0) and +(-0.5,0)..(1,0.5)\stnode{+};
\strand[thick] (0,1.4)..controls+(0.5,0) and +(-0.5,0)..(1,0.2)\stnode{+};
\end{knot}
\end{linkdiag}
\end{equation}
The height order is positive throughout, and the relations hold at the bottom. The
meaning of $d,d'$ is the same as in Lemma~\ref{lemma-tw}. The coefficient combines
into $-q^{\frac{1}{8}(2d'-2d-6)}$, which compensates the framing change introduced by
the crossing. The $\iunit$-grading also changes by $+2$, which is even.

Therefore, there is a set of reduction rules that preserve all gradings, and any even
diagram is reduced to the form in Figure~\ref{fig-std-ann}. This proves the lemma.
\end{proof}

%%%%%%%%%%%%%%%%%%%%%%%%%%%%%%%%%%%%%%%%%%%%%%%%%%%%%%%%%%%%%%%%%%%%%%%%%%%%
%%%%%%%%%%%%%%%%%%%%%%%%%%%%%%%%%%%%%%%%%%%%%%%%%%%%%%%%%%%%%%%%%%%%%%%%%%%%

\section{Quantum trace map}
\label{sec.qtrace}

As mentioned in the introduction, the 3d-quantum trace map plays a key role in
the proof of Theorems~\ref{thm.1} and~\ref{thm.2}. In this section we review its
definition and its properties following the notation of our previous work~\cite{GY}.

\subsection{Dual surface from triangulation}
\label{sec-dualS}

Let $T$ be an oriented tetrahedron. A labeling of the vertices of $T$ by $0,1,2,3$ is
compatible with the orientation if vertices $1,2,3$ are counterclockwise when viewed
from vertex $0$. The tetrahedron has a dual surface $\lantern$ which is a sphere with
4 boundary components. Borrowing from the theory of mapping class groups of surfaces,
we call it the \term{lantern}. The boundary circles of the lantern are labeled
according to the vertex opposite to it. The lantern also comes with 6 \term{standard
arcs} dual to the edges of $T$. The embedding of the lantern and a top view are given
in the first two parts of Figure~\ref{fig-smoothL}, where the blue arcs are the
standard arcs. The last part is the lantern laid flat in the plane. Note the
reordering of the labels to maintain the orientation.

\begin{figure}[htpb!]
\centering
\begin{tikzpicture}[baseline=0.5cm]
\path (0,2.5)coordinate(A) (-2.25,-0.5)coordinate(B) (-0.25,-1.75)
coordinate(C) (2.25,-0.5)coordinate(D);
\path (A)node[right]{0} (B)node[below]{1} (C)node[anchor=30]{2} (D)node[below]{3};
\draw[dashed] (B) -- (D);
\fill[gray!20] (0.4,1) arc[radius=0.4,start angle=0,end angle=180]
arc[radius=0.6,start angle=0,end angle=-90]
arc[radius=0.4,start angle=90,end angle=-90]
arc[radius=0.6,start angle=90,end angle=0]
arc[x radius=0.4,y radius=0.25,start angle=-180,end angle=0]
arc[radius=0.6,start angle=180,end angle=90]
arc[radius=0.4,start angle=-90,delta angle=-180]
arc[radius=0.6,start angle=-90,end angle=-180];
\begin{scope}[red,thick]
\draw[fill=gray!10] (-1,0)circle(0.4) (1,0)circle(0.4);
\draw (0.4,1)arc[radius=0.4,start angle=0,end angle=180] (0.4,-1)
arc[x radius=0.4,y radius=0.25,start angle=0,end angle=-180];
\draw[dashed] (0.4,1)arc[radius=0.4,start angle=0,end angle=-180] (0.4,-1)
arc[x radius=0.4,y radius=0.25,start angle=0,end angle=180];
\end{scope}
\begin{scope}[blue,thick]
\draw (-1,0.4)arc[radius=0.6,start angle=-90,end angle=0]
(1,0.4)arc[radius=0.6,start angle=-90,end angle=-180]
(-1,-0.4)arc[radius=0.6,start angle=90,end angle=0]
(1,-0.4)arc[radius=0.6,start angle=90,end angle=180];
\path (-1,0)++(-30:0.4)coordinate(E) (1,0)++(-150:0.4)coordinate(F);
\draw (E) to[out=30,in=150] (F);
\draw[dashed] (0,1) ++(-85:0.4) to[out=-95,in=90] (0,-0.75);
\end{scope}
\draw[knot,knot gap=7,background color=white] (A) -- (C);
\draw (A) -- (B) -- (C) -- (D) -- cycle;
\end{tikzpicture}
$\Longrightarrow$\quad
\begin{tikzpicture}[baseline=(ref.base)]
\draw[thick,blue,fill=gray!20] (0,0)circle(1);
\fill[white] (0,0)circle(0.12) foreach \t in {90,-30,-150} {(\t:1)circle(0.12)};
\draw[thick,blue,dashed] foreach \t in {-30,-150,90} {(\t:0.12) -- (\t:1.12)};
\draw[thick,red,radius=0.12] (90:1)circle node[anchor=-90]{\small2}
(-150:1)circle node[anchor=30]{\small3}
(-30:1)circle node[anchor=150,outer sep=2pt]{\small1}
(0,0)circle node[anchor=-60,outer sep=1pt]{\small0};
\draw[knot,knot gap=7,background color=white]
(0,0) -- (-90:1.5)node[anchor=90]{2}
(0,0) -- (30:1.5)node[anchor=-150]{3}
(0,0) -- (150:1.5)node[anchor=-30]{1};
\draw (0,0) circle(1.5) node[anchor=120,outer sep=1pt]{0};
\node (ref) at (0,0){\phantom{$-$}};
\end{tikzpicture}
\quad$\Longrightarrow$\quad
\begin{tikzpicture}[baseline=(ref.base)]
\fill[gray!20] (0,0)circle(1.5);
\draw[thick,blue] (0,0)circle(1);
\draw[thick,blue] foreach \t in {-30,-150,90} {(0,0) -- (\t:1)};
\fill[white] (0,0)circle(0.12) foreach \t in {-30,-150,90} {(\t:1)circle(0.12)};
\draw[thick,red,radius=0.12] (90:1)
circle node[anchor=-90,outer sep=2pt]{1} (-30:1)
circle node[anchor=150]{2} (-150:1)circle node[anchor=30]{3} (0,0)
circle node[anchor=90,outer sep=2pt]{0};
\path[inner sep=2pt] (-90:1)node[below]{$\qz$} (150:1)
node[anchor=-40,inner sep=1pt]{$\qz'$} (30:1)node[anchor=-150]{$\qz''$};
\node (ref) at (0,0){\phantom{$-$}};
\end{tikzpicture}
\caption{Lantern surface in a tetrahedron.}
\label{fig-smoothL}
\end{figure}
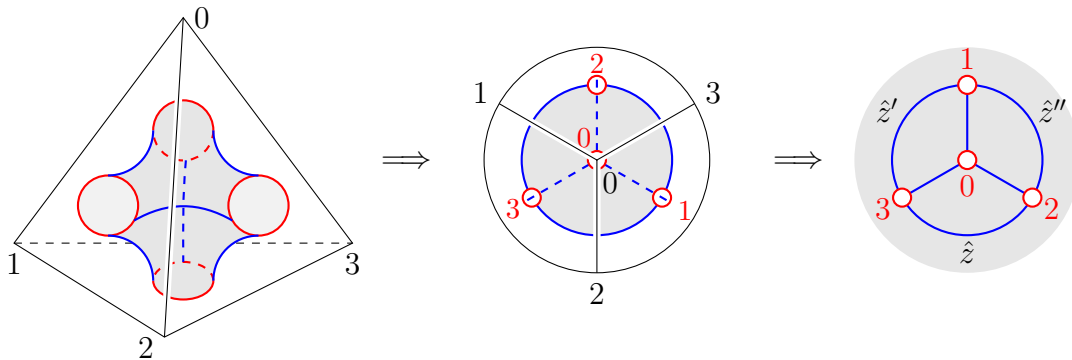

Suppose $M$ is a compact oriented 3-manifold possibly with boundary. Given a
collection of tetrahedra $\triang=\{T_1,\dotsc,T_N\}$ and orientation reversing face
pairing, if the space obtained from gluing the tetrahedra minus the vertices is
homeomorphic to the interior of $M$ preserving the orientation, then $\triang$ is an
oriented triangulation of $M$.

Given an ideal triangulation $\triang$, there is an embedded (Heegaard) surface
$\surface_\triang\subset M$ glued from the lanterns in the tetrahedra. The standard
arcs around the same edge $e$ in the triangulation are connected to form a curve
$B_e$ when the lanterns are glued. On the other hand, each face $f$ of the
triangulation contains a circle $A_f$ which is a boundary circle of the adjacent
lantern. The data $\heeg_\triang:=(\surface_\triang,\{A_f\},\{B_e\})$ is called the
\term{dual surface} of $\triang$. It is equivalent to the triangulation $\triang$
since the intersection pattern of the curves $\{A_f\}$ and $\{B_e\}$ determines the
face pairings of the triangulation. Also note that $M$ can be obtained directly from
$\heeg_\triang$. This can be done by gluing 2-handles to
$\surface_\triang\times[-1,1]$ along $A_f\times\{-1\}$ and $B_e\times\{1\}$ and then
capping off spherical boundary components as necessary. Note the $A$-circles
determine the handlebody inside the dual surface.

\subsection{Quantum tori}

Let us briefly recall this notion of Laurent polynomials in $q$-commuting variables.
A quantum torus on generators $x_1,\dotsc,x_r$ over an algebra $R$ is an associative
$R$-algebra with unit of the form
\begin{equation}
  \qtorus\langle x_1,\dotsc,x_r\rangle:=R\langle x_1^{\pm1},\dotsc,x_r^{\pm1}
  \rangle/\ideal{x_ix_j-q^{D_{ij}}x_jx_i}
\end{equation}
for some skew-symmetric $r\times r$ matrix $D$. It is a free $R$-module with basis
\begin{equation}
\label{eq-Weyl-order}
[\vec{x}^k]=q^{-\frac{1}{2}\sum_{i<j}D_{ij}k_ik_j}x_1^{k_1}\dotsm x_r^{k_r},
\qquad k=(k_1,\dotsc,k_r)\in\ints^r.
\end{equation}
These monomials are called \term{Weyl-ordered}, which is designed to be independent
of the order of the generators. This normalization can be formally understood as the
Baker-Campbell-Hausdorff formula for $e^{\sum_i k_i\ln(x_i)}$. Note that
Weyl-ordering can be defined for all products with $q$-commuting factors.

The product on the quantum torus can be described using the Weyl-ordered monomial
basis as
\begin{equation}
\label{eq-qtorus-prod}
[\vec{x}^k][\vec{x}^l]=q^{\frac{1}{2}\omega(k,l)}[x^{k+l}]
=q^{\omega(k,l)}x^lx^k,
\end{equation}
where $\omega(k,l)=k^tDl$ is the bilinear form associated to $D$.

\subsection{Quantum gluing module}
\label{sec-qglue}

We next recall the quantum gluing module associated to an ideal triangulation.
First consider a single tetrahedron. The quantum gluing module $\qglue(\lantern)$ is
a quotient of the quantum torus over $\ground$
\begin{equation}
\label{eq-qglue-lantern}
  \qglue(\lantern)=\qtorus\langle\qz,\qz',\qz''\rangle/
  \lideal{\qz^{-2}+\qz''^2-1,[\qz\qz'\qz'']-\iunit q^{1/4}},
\end{equation}
where the $q$-commuting relations are given by $\frac{1}{4}\omega$ from
\eqref{eq-block}. Explicitly,
\begin{equation}
\label{eq-zzp}
\qz\qz'=q^{1/4}\qz'\qz,\qquad
\qz'\qz''=q^{1/4}\qz''\qz,\qquad
\qz''\qz=q^{1/4}\qz\qz''.
\end{equation}
Note that $[\qz\qz'\qz'']$ is central, so we can eliminate one of the generators,
typically $\qz'$, from the quantum torus. We will freely do so when convenient.

The variables $\qz,\qz',\qz''$ correspond to the 01, 02, 03 edges, respectively, and
opposite edges are assigned the same generators. It is easy to check that orientation
preserving symmetries of $\qglue(\lantern)$ induce cyclic permutations of
$\qz,\qz',\qz''$ so the definition is independent of the labeling of the tetrahedron.

\begin{remark}
\label{rem-extra-gen}
In \cite{GY}, $\qglue(\lantern)$ has an extra generator $\qy$ satisfying
$\qy^2=\qz^2$. In our main consideration of 3d-index, only squares appear, so the
distinction is irrelevant.
\end{remark}

Let $\triang$ be an oriented triangulation of an oriented 3-manifold $M$ with torus
boundary. Throughout, $N$ is the number of tetrahedra, $r$ is the number of boundary
components, $i$ index edges, $j$ index tetrahedra. In the notations of the ideal, the
range of indices are omitted when clear from context.

The quantum gluing module $\qglue(\triang)$ is defined as
\begin{equation}
\label{eq-qglue-def}
\qglue(\triang):=\rideal{\qe_i+q^{1/2}}\backslash
\Big(\bigotimes_{j=1}^N\qtorus\langle\qz_j,\qz''_j\rangle\Big)
\quotbyL{\qz_j^{-2}+\qz''^2_j-1},
%\rideal{\qe_i+q^{1/2}}\backslash\Big(\bigotimes_{j=1}^N\qglue(\lantern_j)\Big)
\end{equation}
where $\qe_i$ is the Weyl-ordered product of $\qz^\square_j$ around the $i$-th edge.
The monomials $\qe_i$ commute with each other by \cite[Theorem~4.1]{Neu}. Moreover,
$N-r$ of them, say $\qe_1,\dotsc,\qe_{N-r}$, are independent, and the rest can be
written as monomials in $\qe_1,\dotsc,\qe_{N-r}$. The argument of
\cite[Remark~4.6]{GHRS} shows that the independent $N-r$ edges also generate the same
right ideal.

\subsection{Quantum trace map}

The quantum trace map
\be
\label{qtr-def}
\qtr_\triang:\skein(M)\to\qglue(\triang)
\ee
is defined by the following diagram:
\begin{equation}
\label{eq-qtr-def}
\begin{tikzcd}
\skein(\surface_\triang) \arrow[d,two heads] \arrow[r,"\cut_A"] &
\displaystyle\bigotimes_{j=1}^N\skeincr(\lantern_j) \arrow[r,two heads] &
\displaystyle\bigotimes_{j=1}^N\qglue(\lantern_j) \arrow[d,two heads] \\
\skein(M) \arrow[rr,dashed,"\qtr_\triang"]
&& \qglue(\triang)
\end{tikzcd} \,.
\end{equation}
Here $\cut_A$ is the splitting map along all $A$-circles, and
$\skeincr(\lantern)\onto\qglue(\lantern)$ sends a standard arc with $-$ states at
both endpoints to the corresponding generator $\qz^\square$. More generally, the
standard arcs with states $\mu,\nu$ is sent to $\delta_{\mu\nu}(\qz^\square)^{-\mu}$.

\begin{remark}
Suppose $M$ has a boundary component with Euler characteristic $\chi\ne0$, then the
quantum gluing module $\qglue(\triang)$ has quantum inconsistency. There is a product
of $\qe_i$ which is 1 in the quantum torus but $q^{\chi/2}$ according to the
quotient. Although quantum trace map is still defined in this case, most
applications, such as 3d-index considered in this paper, require consistency.
Therefore, we only consider torus boundary in this paper.
\end{remark}

\subsection{Even gluing modules}

We now introduce an even version of the quantum gluing module. 
The even gluing module $\qgluev(\triang)$ is defined as the $\groundev$-span of
squares of Weyl-ordered monomials. We adopt the convention that an upper case letter is
the square of the lower case letter, e.g.\ $\qZ=\qz^2$. Note the coefficients in the
following relations are in $\groundev$.
\begin{equation}
\qZ''\qZ=q^{1/2}[\qZ\qZ'']=q\qZ\qZ'',\qquad
[\qZ\qZ'\qZ'']=-q^{1/2}.
\end{equation}

Let $\qtorusev$ denote a quantum torus over $\groundev$. Then $\qgluev(\triang)$ is a
quotient of $\bigotimes_j\qtorusev\langle\qZ_j,\qZ''_j\rangle$. Our goal is to find a
presentation based on this quotient.

Recall $\qe_i$ commute with each other, and they generate a lattice $\Lambda$ of rank
$N-r$ (with multiplication as the group operation). $\Lambda$ can be embedded in
$\ints^{2N}$ by taking exponents after eliminating $\qz'_j$ using the vertex
equation. This is the same as taking the rows of the Neumann--Zagier matrix.

\begin{lemma}
\label{lemma-basis-change}
Suppose $M$ has no non-peripheral $\ints/2$-homology. Then there exists a generating
set of monomials
$x_1,\dotsc,x_{2N}\in\bigotimes_{j=1}^N\qtorus\langle\qz_j,\qz''_j\rangle$ such that
$x_1,\dotsc,x_{N-r}$ is an independent subset of $\qe_1,\dotsc,\qe_N$.
\end{lemma}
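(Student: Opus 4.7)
The plan is to translate the lemma into a lattice-theoretic statement. A monomial in $\bigotimes_{j=1}^N \qtorus\langle\qz_j,\qz''_j\rangle$ is determined up to a power of $q$ by its exponent vector in $\ints^{2N}$, and a set of $2N$ monomials generates the quantum torus (as a $\ground$-algebra with inverses) precisely when their exponent vectors form a $\ints$-basis of $\ints^{2N}$. Thus the lemma reduces to showing that some $N-r$ of the edge vectors $v_i \in \ints^{2N}$, obtained from $\qe_i$ after eliminating $\qz'_j$ using the central element $[\qz_j\qz'_j\qz''_j]$, can be extended to a $\ints$-basis of $\ints^{2N}$.

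The first step is the claim made in the paragraph above the lemma, attributed to \cite[Remark~4.6]{GHRS}: $N-r$ of the edge monomials can be chosen so that the remaining $r$ become Laurent \emph{monomials} in them. In lattice terms this says that after relabeling, $v_1, \ldots, v_{N-r}$ are $\ints$-linearly independent and already generate the full $\ints$-span $\Lambda$ of all $N$ edge vectors. I would invoke this as given.

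The heart of the argument is then to show that $\Lambda$ is saturated in $\ints^{2N}$, i.e., that $\ints^{2N}/\Lambda$ is torsion-free. This is where the hypothesis on $\ints/2$-homology must enter. By the classical Neumann--Zagier theorem \cite{Neu}, the cokernel of the edge matrix is described in terms of $H_1(M;\ints)$ and peripheral classes. The relevant structural fact for cusped hyperbolic manifolds is that the only torsion that can obstruct saturation of $\Lambda$ in $\ints^{2N}$ is $2$-torsion, and the hypothesis $H_1(\partial M;\ints/2) \onto H_1(M;\ints/2)$ precisely forces that $2$-torsion to be peripheral, which is exactly what is needed to conclude saturation. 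I expect this identification of the cokernel with a manifold invariant, and the verification that only $2$-torsion can appear, to be the main obstacle of the proof.

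Once saturation is established, completing $v_1, \ldots, v_{N-r}$ to a $\ints$-basis of $\ints^{2N}$ is routine linear algebra: choose any $N+r$ monomials whose images generate $\ints^{2N}/\Lambda \cong \ints^{N+r}$, for instance drawn from the original generators $\qz_j, \qz''_j$ after a suitable elimination. These, together with $\qe_1, \ldots, \qe_{N-r}$, give the desired generating set $x_1, \ldots, x_{2N}$.
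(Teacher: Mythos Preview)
Your overall translation into lattice language is exactly right, and the final step (extend a saturated sublattice to a $\ints$-basis) is routine, as you say. The paper does the same.

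Where you diverge from the paper, and where your argument becomes speculative, is the saturation step. You propose to analyze the torsion in $\ints^{2N}/\Lambda$ via an identification of the cokernel of the edge matrix with manifold invariants, argue that only $2$-torsion can appear, and then use the homology hypothesis to kill it. You acknowledge this as the ``main obstacle'' and do not actually carry it out. The paper avoids this entirely by using the \emph{symplectic} structure on $\ints^{2N}$ coming from the $q$-commuting relations of $\qz_j,\qz''_j$. Neumann's combinatorial result \cite[Theorem~4.2]{Neu} gives directly that $\Lambda^\perp=\Lambda$ under the hypothesis of no non-peripheral $\ints/2$-homology (here $\perp$ is with respect to this symplectic form). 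Saturation then drops out in one line: if $nv\in\Lambda$ for some $n\neq 0$, then $\omega(v,w)=\tfrac{1}{n}\omega(nv,w)=0$ for all $w\in\Lambda$, so $v\in\Lambda^\perp=\Lambda$.

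So the missing ingredient in your proposal is the symplectic form, which turns the saturation question into an immediate consequence of Neumann's theorem rather than a torsion computation you would still need to perform.
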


\begin{proof}
$\ints^{2N}$ has a symplectic form that corresponds to the $q$-commuting relations of
$\qz_j,\qz''_j$. By \cite[Theorem~4.2]{Neu}, $\Lambda^\perp=\Lambda$ when $M$ has no
non-peripheral $\ints/2$-homology. Here, the complement is defined using the
symplectic form. This shows that $\Lambda$ is a direct summand of $\ints^{2N}$. Thus,
we can make a change of basis so that $x_1,\dotsc,x_{N-r}$ is a basis of
$\bar{\Lambda}$, and $x_{N-r+1},\dotsc,x_{2N}$ is the rest of the basis of
$\ints^{2N}$. The lemma is just a translation of this basis.
\end{proof}

\begin{proposition}
\label{prop-qgluev}
If $M$ has no non-peripheral $\ints/2$-homology, then the even part
has the presentation
\begin{equation}
\label{eq-qgluev}
\qgluev(\triang)=\rideal{\qE_i-q}\backslash\Big(
\bigotimes_{j=1}^N\qtorusev\langle\qZ_j,\qZ''_j\rangle\Big)
\quotbyL{\qZ_j^{-1}+\qZ_j''-1}.
\end{equation}
\end{proposition}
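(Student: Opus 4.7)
The plan is to construct a surjective $\groundev$-module map from the right-hand side of~\eqref{eq-qgluev} onto $\qgluev(\triang)$ and then establish injectivity, first by descending through the right ideal to a smaller quantum torus and then by handling the left ideal via a parity decomposition. Write $\mathcal{A}:=\bigotimes_{j=1}^N\qtorus\langle\qz_j,\qz''_j\rangle$ and $\mathcal{A}^{\mathrm{ev}}:=\bigotimes_{j=1}^N\qtorusev\langle\qZ_j,\qZ''_j\rangle$, the latter viewed as a $\groundev$-subalgebra of $\mathcal{A}$ via $\qZ_j=\qz_j^2$ and $\qZ''_j=\qz_j''^2$. The commutation $\qZ''_j\qZ_j=q\qZ_j\qZ''_j$ follows by squaring $\qz''_j\qz_j=q^{1/4}\qz_j\qz''_j$, and by skew-symmetry of $\omega$ every square $[\vec x^k]^2=[\vec x^{2k}]$ is a $\groundev$-scalar multiple of a monomial in the $\qZ_j,\qZ''_j$.

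Define $\Phi\colon\mathcal{A}^{\mathrm{ev}}\to\qglue(\triang)$ as the composition of the inclusion $\mathcal{A}^{\mathrm{ev}}\hookrightarrow\mathcal{A}$ with the projection. Its image is exactly $\qgluev(\triang)$ by the definition of the latter as the $\groundev$-span of squares of Weyl-ordered monomials. The map $\Phi$ descends to the claimed quotient because the left-ideal relation $\qZ_j^{-1}+\qZ''_j-1$ coincides with the original vertex relation $\qz_j^{-2}+\qz_j''^2-1$, while, since $\qe_i$ commutes with itself,
\begin{equation*}
\qE_i-q=\qe_i^2-(q^{1/2})^2=(\qe_i+q^{1/2})(\qe_i-q^{1/2})\in\rideal{\qe_i+q^{1/2}}.
\end{equation*}
This yields the induced surjection $\overline{\Phi}$.

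The main obstacle is injectivity. It reduces to two identifications:
\begin{equation*}
\mathcal{A}^{\mathrm{ev}}\cap\rideal{\qe_i+q^{1/2}}=\rideal{\qE_i-q}\quad\text{(inside $\mathcal{A}^{\mathrm{ev}}$),}
\end{equation*}
and, writing $\overline{\mathcal{A}}:=\mathcal{A}/\rideal{\qe_i+q^{1/2}}$ and letting $\overline{\mathcal{A}}^{\mathrm{ev}}$ denote the image of $\mathcal{A}^{\mathrm{ev}}$,
\begin{equation*}
\overline{\mathcal{A}}^{\mathrm{ev}}\cap\lideal{\overline V_j}=\lideal{\overline V_j}_{\overline{\mathcal{A}}^{\mathrm{ev}}},
\end{equation*}
where $V_j=\qz_j^{-2}+\qz_j''^2-1$. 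For the first, Lemma~\ref{lemma-basis-change} supplies (using the $\ints/2$-homology hypothesis) an integral basis $x_1,\dotsc,x_{2N}$ of the exponent lattice of $\mathcal{A}$ with $x_1,\dotsc,x_{N-r}$ an independent subset of $\qe_1,\dotsc,\qe_N$, so $\rideal{\qe_i+q^{1/2}}=\rideal{x_i+q^{1/2}}_{i\le N-r}$. Substituting $x_i\equiv-q^{1/2}$ for $i\le N-r$ reduces every Weyl-ordered monomial $[\vec x^k]$ in $\overline{\mathcal{A}}$ to a $\ground$-scalar times $\prod_{i>N-r}x_i^{k_i}$; restricted to even exponents $2k$ this substitution becomes $X_i\equiv q$ with $\groundev$-scalar, yielding the identification via a direct Weyl-phase comparison using $D_{ij}=\omega_{ij}/4$ in $\mathcal{A}$ against $\omega_{ij}$ in $\mathcal{A}^{\mathrm{ev}}$. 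For the second, $V_j$ has even exponents in any integral basis so $\overline V_j\in\overline{\mathcal{A}}^{\mathrm{ev}}$, and parity-decomposing $\sum_j\gamma_j\overline V_j\in\overline{\mathcal{A}}^{\mathrm{ev}}$ as $\gamma_j=\gamma_j^{\mathrm{ev}}+\gamma_j^{\mathrm{odd}}$ forces the odd contribution $\sum_j\gamma_j^{\mathrm{odd}}\overline V_j$ to vanish, placing the element in $\lideal{\overline V_j}_{\overline{\mathcal{A}}^{\mathrm{ev}}}$.

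I expect the hardest technical step to be the first identification: while the reduction $x_i\equiv-q^{1/2}$ is straightforward in $\mathcal{A}$, verifying that the $q$-phases arising on even monomials coincide with those arising from $X_i\equiv q$ in $\mathcal{A}^{\mathrm{ev}}$ requires careful bookkeeping of Weyl-ordering scalars in both algebras. The hypothesis $\Lambda^\perp=\Lambda$ underlying Lemma~\ref{lemma-basis-change} is precisely what makes the basis change integral, without which the parity grading on $\mathcal{A}$ would not descend to $\overline{\mathcal{A}}$ compatibly with the image of $\mathcal{A}^{\mathrm{ev}}$.
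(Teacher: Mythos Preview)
Your approach is correct and closely parallels the paper's: both hinge on Lemma~\ref{lemma-basis-change} to choose a monomial basis adapted to the edge lattice and then use grading arguments to cut down to even exponents with $\groundev$-coefficients. Your two-step organization (first intersect with the right edge ideal, then handle the Lagrangian left ideal in the quotient $\overline{\mathcal{A}}$) is a clean reorganization of the paper's simultaneous treatment, and it has the pleasant side effect of bypassing the paper's ``long division'' step that extracts $x_i+q^{1/2}$ from the Lagrangian coefficients $d_{jk}$.

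One point deserves tightening. In your second identification, the ``parity decomposition'' of $\gamma_j\in\overline{\mathcal{A}}$ must be taken with respect to the \emph{combined} grading: the $(\ints/2)^{N+r}$ exponent parity on $\tilde{x}^l$ together with the $(\ints/2\times\ints/4)$-grading of $\ground$ over $\groundev$. Exponent parity alone only lands $\gamma_j$ in the $\ground$-span of even monomials, not in $\overline{\mathcal{A}}^{\mathrm{ev}}$. The paper invokes exactly this coefficient grading as its final step. Once you do the same, the argument goes through: the right action of $V_j\in\mathcal{A}^{\mathrm{ev}}$ on $\overline{\mathcal{A}}$ preserves this combined grading (its degree-$0$ part $\overline{\mathcal{A}}^{\mathrm{ev}}$ is a right $\mathcal{A}^{\mathrm{ev}}$-module by your step~1, and the remaining graded pieces are $\iunit^a q^{b/8}$-translates), so the ``odd'' contribution vanishes as you claim. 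Contrary to your expectation, step~1 is the more routine of the two; it is this grading compatibility in step~2 that carries the real content.
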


If necessary, we can restore $\qZ'_j$ in the presentation and add the (central)
relation $[\qZ_j\qZ'_j\qZ''_j]+q^{1/2}$.

\begin{proof}
For convenience, write $\qtorusev_\triang$ for the tensor quantum torus. We need to
show
\begin{equation}
\label{eq-qgluev-cap}
(\rideal{\qe_i+q^{1/2}}+\lideal{\qz_j^{-2}+\qz_j''^2-1})\cap\qtorusev_\triang
=\rideal{\qE_i-q}+\lideal{\qZ_j^{-1}+\qZ_j''-1}.
\end{equation}
Using the generators from Lemma~\ref{lemma-basis-change} and the monomial basis of
the quantum torus, we can write an element in the left-hand side of
\eqref{eq-qgluev-cap} as

\begin{equation}
\label{eq-qgluev-ielem}
\sum_{i=1}^{N-r}(x_i+q^{1/2})\sum_{k\in\ints^{2N-r}}c_{ik}\tilde{x}^k+
\sum_{j=1}^N\sum_{k\in\ints^{2N-r}}d_{jk}\tilde{x}^k
(\qz_j^{-2}+\qz_j''^2-1)\in\qtorusev_\triang,
\end{equation}
where $c_{ik},d_{jk}\in\ground[x_1,\dotsc,x_{N-r}]$, and
$\tilde{x}^k=[x_{N-r+1}^{k_1}\dotsm x_{2N}^{k_{2N-r}}]$ is the Weyl-ordered monomial.
Since the variables $x_{N-r+1},\dotsc,x_{2N}$ only appear in $\tilde{x}^k$ and
$\qz_j,\qz''_j$, terms where $k\notin(2\ints)^{2N-r}$ must cancel for this element to
be in $\qtorusev_\triang$, so we may restrict the sum to $k\in(2\ints)^{2N-r}$. Using
long division, we can extract all $x_i+q^{1/2}$ from $d_{jk}$, so we can assume
$d_{jk}$ are constants.

Let $\qtorus^2_\triang$ be the $\ground$-span of squares, or
$\qtorus^2_\triang=\bigoplus_{s=0}^1\bigoplus_{t=0}^3\iunit^sq^{t/8}\qtorusev_\triang$.
In \eqref{eq-qgluev-ielem}, the second term is in $\qtorus^2_\triang$ by assumption,
so the first term is in $\qtorus^2_\triang$ as well. This means
\begin{equation}
\sum_{i=1}^{N-r}(x_i+q^{1/2})c_{ik}
\in\qtorus^2_\triang\cap\ground[x_1,\dotsc,x_{N-r}]
=\ground[x_1^2,\dotsc,x_{N-r}^2].
\end{equation}
This shows we can rearrange the sum to be $\sum_{i=1}^{N-r}(x_i^2-q)c'_{ik}$ for
$c'_{ik}\in\ground[x_1^2,\dotsc,x_{N-r}^2]$. Putting this into \eqref{eq-qgluev-ielem},
we get
\begin{equation}
\label{eq-qgluev-ielem2}
\sum_{k\in(2\ints)^{2N-r}}\Big(\sum_{i=1}^{N-r}(x_i^2-q)c'_{ik}\tilde{x}^k
+\sum_{j=1}^Nd_{jk}\tilde{x}^k(\qz_j^{-2}+\qz_j''^2-1)\Big)
\in\qtorusev_\triang\subset\qtorus^2_\triang.
\end{equation}
Finally, the $(\ints/2\times\ints/4)$-grading over $\groundev$ by powers of $\iunit$
and $q^{1/8}$ can be used to rearrange the coefficients in \eqref{eq-qgluev-ielem2}
into $\groundev$. This proves~\eqref{eq-qgluev-cap}.
\end{proof}

\begin{remark}
\label{rmk-qgluev}
The same condition on homology appears in \cite{HRS}. When the condition is not
satisfied, it is easy to find extra relations in the even part. We use the example of
the hyperbolic census manifold \texttt{m136} considered in \cite{HRS}. It has 1 torus
boundary and homology $\ints\oplus(\ints/2)^2$, so $H_1(\partial M;\ints/2)\to
H_1(M;\ints/2)$ cannot be surjective. Using the default triangulation given by
\texttt{SnapPy} with isomorphism signature \texttt{eLMkbcdddhhqqa}, we see there is
an edge $\qe_1=\qZ_1\qZ_2\qZ_3\qZ_4$ already in the even part. This means the even
part contains the relation $\qe_1=-q^{1/2}$, while the naive presentation
\eqref{eq-qgluev} only contains the weaker condition $\qE_1=q$. There are refinements
of the quantum gluing modules that reduce unexpected evenness but not completely.

This is also reflected in the even skein modules. Suppose $\surface$ is a dual
surface of $M$ and consider two even $\surface$-diagrams for an even link in $M$.
Although they are related by handle slides, the intermediate stages may not be even
on $\surface$. The naive solution is to only slide along curves twice in a row, but
when there is non-peripheral $\ints/2$-homology, these moves do not relate all
even diagrams. The dual surface to the triangulation above easily produces such an
example.
\end{remark}

\subsection{The even part of quantum trace}

Having introduced the even skein module and the even gluing module, we now
come to the even quantum trace map. 

\begin{theorem}
The quantum trace restricts to a map $\skeinev(M)\to\qgluev(\triang)$.
\end{theorem}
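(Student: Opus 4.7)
The plan is to trace the diagram~\eqref{eq-qtr-def} defining $\qtr_\triang$ and verify that evenness is preserved at each arrow, then descend to $\qgluev(\triang)$. Given $x\in\skeinev(M)$, the handle-slide argument at the end of Section~\ref{sec-even-skein} produces a lift $\tilde x\in\skeinev(\surface_\triang)$, so it suffices to track $\tilde x$ along the top row of~\eqref{eq-qtr-def}. Iterating Lemma~\ref{lemma-split-ev} along the $A$-circles shows that $\cut_A(\tilde x)$ lies in $\bigotimes_j\skeincrev(\lantern_j)$.

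The heart of the argument is the middle horizontal arrow: the claim is that the quotient $\skeincr(\lantern)\to\qglue(\lantern)$ restricts on each lantern to a map $\skeincrev(\lantern)\to\qgluev(\lantern)$. The strategy mirrors Lemma~\ref{lemma-counit-ev}. Using the defining relations \eqref{eq-skein}--\eqref{eq-stex}, corner reduction \eqref{eq-crdef}, and the twist rules of Lemma~\ref{lemma-tw}, every element of $\skeincr(\lantern)$ rewrites as a $\ground$-linear combination of standard-form diagrams consisting of parallel copies of standard arcs with fixed height order. Under the quotient, each such diagram maps via $\delta_{\mu\nu}(\qz^\square)^{-\mu}$ to a monomial in $\qz,\qz',\qz''$. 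An even standard-form diagram has an even number of endpoints on every boundary edge, so each generator occurs to an even power and the resulting monomial is a product of the squares $\qZ,\qZ''$; Weyl-ordering between $q$-commuting generators at even exponents produces only integer powers of $q^{1/2}$. Combined with the normalizing factor $\iunit^{d(\alpha)}q^{n(\alpha)/8+\sigma(\alpha)/4}$ from~\eqref{eq-skeinev-def}, the image lies in $\qgluev(\lantern)$.

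For the final descent, the quotient by the right ideal $\rideal{\qe_i+q^{1/2}}$ induces on the even tensor product $\bigotimes_j\qgluev(\lantern_j)$ the quotient by $\rideal{\qE_i-q}$ of Proposition~\ref{prop-qgluev}, since $\qe_i+q^{1/2}=0$ squares to $\qE_i-q=0$ up to a unit in $\groundev$, and the $(\iunit,q^{1/8})$-grading argument used in that proposition rules out any extra intersection with $\qtorusev_\triang$. Consequently the composition lands in $\qgluev(\triang)$, proving the theorem. The main obstacle is the local verification on the lantern: one must check that the parities of $n(\alpha)$, $d(\alpha)$, and $\sigma(\alpha)$ align with the Weyl-ordering phases for every standard-form even diagram on the six standard arcs, so that every fractional power of $q$ and every stray $\iunit$ is absorbed. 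This is done in the spirit of the pairwise combination of reduction moves at the end of the proof of Lemma~\ref{lemma-counit-ev}, where an analogous grading-preserving normal form was extracted.
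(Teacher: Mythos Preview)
Your overall reduction is the same as the paper's: lift to $\skeinev(\surface_\triang)$, apply Lemma~\ref{lemma-split-ev} along the $A$-circles, and then establish the lantern-level statement that $\skeincrev(\lantern)\to\qgluev(\lantern)$. The difference is in how the lantern step is executed.

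The paper does not attempt a direct grading-preserving reduction on $\lantern$. Instead it uses the coaction/counit trick: after isotoping the diagram into a neighborhood of three chosen standard arcs and the boundary triangles (Figure~\ref{fig-std-iso} and \cite[Theorem~4.25]{GY}), it splits off four annuli parallel to the boundary triangles. The annulus pieces are handled by the already-proved Lemma~\ref{lemma-counit-ev}, which guarantees the counits land in $\groundev$; the remaining inner lantern carries only parallel copies of three standard arcs, and since those three arcs are linearly independent in $H_1(\lantern,\partial\lantern;\ints/2)$, even homology forces even multiplicities, hence squares in $\qglue(\lantern)$. Your proposal instead asserts a normal form on $\lantern$ obtained by iterating the twist and corner-reduction moves while preserving all gradings ``in the spirit of'' Lemma~\ref{lemma-counit-ev}. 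That is a reasonable strategy, but it is genuinely more work than the annulus case (six standard arcs, four boundary triangles, many more pairwise combinations to check), and you have only sketched it. The paper's route buys you exactly this: the hard grading bookkeeping is confined to the annulus, where it has already been done.

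Your final paragraph on descent is both unnecessary and problematic. Once you know the image lies in $\bigotimes_j\qgluev(\lantern_j)\subset\bigotimes_j\qglue(\lantern_j)$, the further quotient by $\rideal{\qe_i+q^{1/2}}$ sends this $\groundev$-span of square monomials into the $\groundev$-span of square monomials in $\qglue(\triang)$, which is \emph{by definition} $\qgluev(\triang)$. No presentation is needed, and in particular you should not invoke Proposition~\ref{prop-qgluev}: that proposition carries the hypothesis of no non-peripheral $\ints/2$-homology, which the theorem does not assume. (Also, $(\qe_i+q^{1/2})^2$ is not $\qE_i-q$ up to a unit; what is true is $(\qe_i-q^{1/2})(\qe_i+q^{1/2})=\qE_i-q$, but that computation is irrelevant here.)
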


\begin{proof}
Since the quantum trace map is induced by $\skein(\surface)\to\qglue(\triang)$, we
only need to show that $\skeinev(\surface)$ maps to $\qgluev(\triang)$. By
Lemma~\ref{lemma-split-ev}, this is further reduced to the statement that
$\skeincr(\lantern)\onto\qglue(\lantern)$ maps $\skeincrev(\lantern)$ to
$\qgluev(\lantern)$.

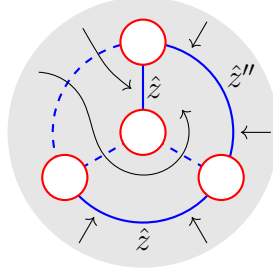
\begin{figure}[htpb!]
\centering
\begin{tikzpicture}
\tikzmath{\r=1.2;\s=0.3;\d=0.45;\e=\r*0.6;}
\fill[gray!20] (0,0)circle(\r+0.6);
\begin{scope}[thick,blue,radius=\r]
\draw[dashed] (-150:\r) arc[start angle=-150,delta angle=-120];
\draw (90:\r) arc[start angle=90,delta angle=-240];
\draw (0,0) -- (90:\r);
\draw[dashed] (0,0) -- (-30:\r) (0,0) -- (-150:\r);
\end{scope}
\draw[thick,red,fill=white] (90:\r)circle(\s) (-30:\r)
circle(\s) (-150:\r)circle(\s) (0,0)circle(\s);
\path[inner sep=2pt] (-90:\r)node[below]{$\qz$} (90:\r/2)
node[right,inner sep=1pt]{$\qz$} (30:\r)node[anchor=-150]{$\qz''$};
\draw[->] (150:\r+0.4) to[curve through={(150:\r*0.8)..(-150:\r/2)..(-30:\r/2)}]
(30:\r/2);
\draw[->] (120:\r+0.4) ..controls (120:\r*0.8).. (105:\r/2);
\foreach \t in {0,60,-60,-120}
\draw[->] (\t:\r+0.5) -- (\t:\r+0.1);
\end{tikzpicture}
\caption{Isotopy toward the standard arcs.}
\label{fig-std-iso}
\end{figure}

Choose one of the standard arcs corresponding to $\qz''$. If the lantern is cut along
this arc as well as both standard arcs for $\qz$, the result is a disk. Therefore,
every diagram on the lantern can be drawn in the neighborhood of these arcs and the
boundary triangles. See Figure~\ref{fig-std-iso}. As explained in
\cite[Theorem~4.25]{GY}, we can draw 4 circles isotopic to the boundary triangles
such that the diagram consists of standard arcs in the smaller lantern bounded by these
curves. 
% An example is given in Figure~\ref{fig-std-iso}, where the thick lines
% are the chosen standard arcs, and the horizontal one is $\qz''$.
We can split along these 4 curves and apply counits, which does not change the
element in $\skeincr(\lantern)$. If we start in $\skeincrev(\lantern)$, then the
counits give coefficients in $\groundev$ by Lemma~\ref{lemma-counit-ev}, and the
standard arcs have even multiplicities by the even homology condition. Therefore, the
image in $\qglue(\lantern)$ is in the even part.
\end{proof}

%%%%%%%%%%%%%%%%%%%%%%%%%%%%%%%%%%%%%%%%%%%%%%%%%%%%%%%%%%%%%%%%%%%%%%%%%%%%
%%%%%%%%%%%%%%%%%%%%%%%%%%%%%%%%%%%%%%%%%%%%%%%%%%%%%%%%%%%%%%%%%%%%%%%%%%%%

\section{Change of triangulation}

We want to define maps from the skein module $\skein(M)$ that factor through the
quantum trace maps for suitable classes of triangulations. This requires an
understanding of the relations between quantum trace maps for triangulations related
by certain moves. In this paper, we consider the 3--2 and 2--0 moves for the
application of 3d-index.

\subsection{Compatibility with 3--2 moves}
\label{sec-qtr-32}

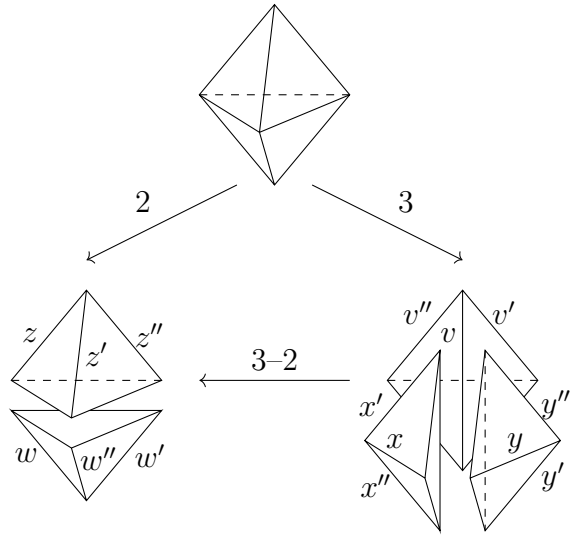
\begin{figure}[htpb!]
\centering
\begin{tikzpicture}
\path (-1,0)coordinate(3) (-0.2,-0.5)coordinate(1) (1,0)coordinate(2)
(0,1.2)coordinate(0) (0,-1.2)coordinate(4);
% \path (3) node[left]{3} (1) node[anchor=-120]{1} (2) node[right]{2} (0)
% node[above]{0} (4) node[below]{4};
\draw (0) -- (3) -- (4) -- (2) -- cycle (3) -- (1) -- (2) (0) -- (1) -- (4);
\draw[dashed] (3) -- (2);
\draw[->] (4) ++(-0.5,0) -- node[midway,above left]{2} +(-2,-1);
\draw[->] (4) ++(0.5,0) -- node[midway,above right]{3} +(2,-1);
\begin{scope}[xshift=-2.5cm,yshift=-3.8cm]
  \path (-1,-0.4)coordinate(3) (-0.2,-0.9)coordinate(1) (1,-0.4)
  coordinate(2) (0,-1.6)coordinate(4);
  \draw (4) -- node[midway,left]{$w$} (3) -- (1) -- (2) -- node[midway,right]{$w'$}
  cycle (4) -- node[pos=0.8,right,inner sep=2pt]{$w''$} (1) (3) -- (2);
  \path (-1,0)coordinate(3) (-0.2,-0.5)coordinate(1) (1,0)coordinate(2) (0,1.2)
  coordinate(0);
\fill[white] (1) -- (2) -- (3);
\draw (0) -- node[midway,left]{$z$} (3) -- (1) -- (2) -- node[midway,right]{$z''$}
cycle (0) -- node[midway,right,inner sep=2pt]{$z'$} (1);
\draw[dashed] (3) -- (2);
\end{scope}
\draw[<-] (-1,-3.8) -- node[midway,above]{3--2} (1,-3.8);
\begin{scope}[xshift=2.5cm,yshift=-3.8cm]
\path (-1,0)coordinate(3) (1,0)coordinate(2) (0,1.2)coordinate(0) (0,-1.2)coordinate(4);
\draw (0) -- node[pos=0.25,left]{$v''$} (3) -- (4) -- (2) -- node[pos=0.75,right]{$v'$}
cycle (0) -- node[pos=0.25,left,inner sep=2pt]{$v$} (4);
\draw[dashed] (3) -- (2);
\path (-1.3,-0.8)coordinate(3) (-0.5,-1.3)coordinate(1) (-0.3,0.4)
coordinate(0) (-0.3,-2) coordinate(4);
\fill[white] (3) -- (0) -- (4);
\draw (3) -- node[pos=0.4,left]{$x'$} (0) -- (4) -- (1) -- node[midway,above]{$x$}
cycle (3) -- node[pos=0.5,left]{$x''$} (4) (0) --(1);
\path (0.1,-1.3)coordinate(1) (1.3,-0.8)coordinate(2) (0.3,0.4)
coordinate(0) (0.3,-2) coordinate(4);
\fill[white] (4) -- (1) -- (0) -- (2);
\draw (0) -- (1) -- (4) -- node[pos=0.6,right]{$y'$} (2) -- node[pos=0.4,right]{$y''$}
cycle (1) -- node[midway,above,inner sep=2pt]{$y$} (2);
\draw[dashed] (0) -- (4);
\end{scope}
\end{tikzpicture}
\caption{3--2 Pachner move.}
\label{fig-32}
\end{figure}

Suppose $\triang_3\to\triang_2$ is a 3--2 move shown in Figure~\ref{fig-32}. Let
$\qtorus_2$ and $\qtorus_3$ be the quantum tori in the definition of
$\qglue(\triang_2)$ and $\qglue(\triang_3)$, respectively. Define
$\phi_{3,2}:\qtorus_2\to\qtorus_3$ by
\begin{equation}
\label{eq-32}
\begin{alignedat}{3}
\phi_{3,2}(\qz)&=\qv''\qx',&\qquad
\phi_{3,2}(\qz')&=\qx''\qy',&\qquad
\phi_{3,2}(\qz'')&=\qy''\qv',\\
\phi_{3,2}(\qw)&=\qx''\qv',&
\phi_{3,2}(\qw')&=\qv''\qy',&
\phi_{3,2}(\qw'')&=\qy''\qx',
\end{alignedat}
\end{equation}
and $\phi_{3,2}$ acts as identity on the remaining generators of $\qglue(\triang_2)$.
A straightforward calculation shows that \eqref{eq-32} has the correct $q$-commuting
relations, so this is a well-defined algebra map.

\begin{lemma}
\label{lemma-welldef-32}
The map above induces a well-defined map
\begin{equation}
\label{phi32}
\phi_{3,2}:\qglue(\triang_2)\to\qglue(\triang_3).
\end{equation}
\end{lemma}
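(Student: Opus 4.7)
The map $\phi_{3,2}$ is already an algebra homomorphism between the ambient quantum tori $\qtorus_2\to\qtorus_3$, since (as noted in the lemma's preamble) the image monomials in~\eqref{eq-32} satisfy the same $q$-commutation relations as $\qz,\qz',\qz'',\qw,\qw',\qw''$. What remains is to show that $\phi_{3,2}$ descends to the double quotient defining $\qglue$, i.e.\ that it carries the left ideal generated by the vertex equations into the left ideal of $\qglue(\triang_3)$, and the right ideal generated by the edge monomials $\qe_i+q^{1/2}$ into the right ideal of $\qglue(\triang_3)$. My plan is to verify these two statements separately, since they are logically independent.

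For the vertex equations I would substitute~\eqref{eq-32} into $\qz^{-2}+\qz''^2-1$ and $\qw^{-2}+\qw''^2-1$ (together with their primed cyclic variants) and reduce the resulting expressions in the $\qx, \qy, \qv$ variables modulo the three vertex equations for $x,y,v$ in $\triang_3$. Since $\phi_{3,2}$ acts as the identity on the tetrahedra outside the move, this is a purely local calculation in the three tetrahedra involved, and is essentially the quantum lift of the classical shape-parameter identity underlying a 2--3 Pachner move. For the edge relations, I would first set up the combinatorial bijection between edges of $\triang_2$ and $\triang_3$: every edge external to the move appears on both sides and its edge monomial picks up a contribution from exactly those new tetrahedra incident to it, so I would verify tetrahedron by tetrahedron that the substitution~\eqref{eq-32} matches the contributions from $(\qz,\qw)$ with those from $(\qx,\qy,\qv)$. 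The one edge of $\triang_2$ that is ``new'' (namely the edge at the shared face of $z$ and $w$) must be handled separately: here I would show directly that $\phi_{3,2}$ of the corresponding edge monomial equals $-q^{1/2}$ modulo the left ideal, so that $\qe+q^{1/2}$ vanishes in $\qglue(\triang_3)$.

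The main obstacle is not conceptual but computational: keeping track of the Weyl-ordering factors when reshuffling products such as $\qv''\qx'\cdot\qy''\qv'$ through the $q$-commutation relations of $\qtorus_3$, and then matching the resulting powers of $q^{1/4}$ against those produced by the Weyl-ordering of the original edge monomials in $\qtorus_2$. The formulas in~\eqref{eq-32} are manifestly designed so that these normalizations work out, but confirming this for every edge and every vertex equation requires patient bookkeeping. Once this is in hand, the two ideal containments follow and the induced map~\eqref{phi32} on $\qglue$ is well defined.
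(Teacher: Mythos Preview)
Your plan to verify the two ideal containments separately has a genuine gap: the left ideal generated by the Lagrangians of $\triang_2$ does \emph{not} map into the left ideal of Lagrangians in $\qtorus_3$ alone. If you substitute \eqref{eq-32} into $\qz^{-2}+\qz''^2-1$ and reduce modulo the three Lagrangians $L'_x,L_y,L''_v$ as you propose, you will not get zero; what remains is $(q^{-1}\qe^2-1)\qv'^2\qy^{-2}$, where $\qe=\qv\qx\qy$ is the monomial of the \emph{extra} edge of $\triang_3$ (the central vertical edge of the bipyramid). So the image of the Lagrangian left ideal lands in the sum $\rideal{\qe+q^{1/2}}+\lideal{\text{Lagrangians}}$, not in either ideal separately. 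The two ideal checks are therefore not logically independent.

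There is also a subtlety you are missing that makes this work at all. One must show that for any $r\in\qtorus_2$, the element $\phi_{3,2}\bigl(r(\qz^{-2}+\qz''^2-1)\bigr)$ lies in the two-sided sum above. The factor $(q^{-1}\qe^2-1)$ appears on the wrong side to be in the right ideal \emph{a priori}; what saves this is the observation that $\qe$ commutes with everything in the image of $\phi_{3,2}$ (since the image only involves $\qx',\qx'',\qy',\qy'',\qv',\qv''$ in those slots, and by~\eqref{oEiS}). This commutation lets you pull $(q^{-1}\qe^2-1)$ past $\phi_{3,2}(r)$ to the left, placing the term in $\rideal{\qe+q^{1/2}}$. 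Finally, you have the geometry of the move backwards: $\triang_2$ has no ``new'' edge at the shared face of $z$ and $w$; every edge of $\triang_2$ persists in $\triang_3$, and it is $\triang_3$ that acquires the extra internal edge $\qe$. The edge part of your argument is essentially fine once this is corrected: the paper dispatches it in one sentence by inspection of Figure~\ref{fig-32}.
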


Clearly, $\phi_{3,2}$ restricts to a map
$\qgluev(\triang_2)\to\qgluev(\triang_3)$.

\begin{proof}
It is easy to see from Figure~\ref{fig-32} that the edges $\qe_i\in\qtorus_2$ are
mapped to the corresponding monomials in $\qtorus_3$ by $\phi_{3,2}$. It remains to
check the Lagrangians are related by $\phi_{3,2}$. For the Lagrangians away from the
move region, the statement is trivial, and the cases of $\qz$ and $\qw$ are
symmetric. Here, we consider $\qz$.
\begin{equation}
\begin{split}
\phi_{3,2}(\qz^{-2}+\qz''^2-1)&=(\qv''\qx')^{-2}+(\qy''\qv')^2-1\\
&=\qv''^{-2}(1-\qx^2+L'_x)+\qv'^2(1-\qy^{-2}+L_y)-1\\
&=(-\qv''^{-2}\qv'^{-2}\qx^2\qy^2-1)\qv'^2\qy^{-2}+L''_v+\qv''^2L'_x+\qv'^2L_y\\
&=(q^{-1}\qv^2\qx^2\qy^2-1)\qv'^2\qy^{-2}+L''_v+\qv''^2L'_x+\qv'^2L_y.
\end{split}
\end{equation}
Here $L'_x=\qx'^{-2}+\qx^2-1$, $L_y=\qy^{-2}+\qy''^2-1$, and $L''_v=\qv''^{-2}+\qv'^2-1$
are Lagrangians in $\qtorus_3$. Note $\qv\qx\qy=\qe$ is the extra edge in
$\qtorus_3$, which commutes with the image of $\phi_{3,2}$. Then for any
$r\in\qtorus_2$,
\begin{equation}
\phi_{3,2}(r(\qz^{-2}+\qz''^2-1))=(q^{-1}\qe^2-1)\phi_{3,2}(r)\qv'^2\qy^{-2}
+(L''_v+\qv''^2L'_x+\qv'^2L_y).
\end{equation}
This shows that $\phi_{3,2}(\lideal{\qz^{-2}+\qz''^2-1})\subset
\rideal{\qe+q^{1/2}}+\lideal{\text{Lagrangian}}$.
\end{proof}

\begin{proposition}
\label{prop-qtr-32}
The quantum trace map is compatible with
$\phi_{3,2}$: $\qtr_{\triang_3}=\phi_{3,2}\circ\qtr_{\triang_2}$.
\end{proposition}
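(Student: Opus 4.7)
The plan is to localize the identity to the 3-ball $B \subset M$ in which the Pachner move takes place. Let $B$ be the common image in $M$ of the three tetrahedra of $\triang_3$ and the two tetrahedra of $\triang_2$ involved in the move. On $M \setminus \mathrm{int}(B)$ the two triangulations coincide, so the dual surfaces $\surface_{\triang_2}$ and $\surface_{\triang_3}$ agree there, the splittings $\cut_A$ into lanterns produce the same ``outside'' tensor factors, and $\phi_{3,2}$ is by definition the identity on those factors. Hence $\qtr_{\triang_3}(\alpha) = \phi_{3,2}(\qtr_{\triang_2}(\alpha))$ is automatic for any framed link $\alpha$ that can be isotoped to be disjoint from $B$.

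For a general $\alpha$, first isotope it transverse to the 2-sphere $\partial B$ and split along $\partial B$; using bilinearity of the quantum trace and the compatibility of $\cut_A$ with this splitting, the problem reduces to verifying the identity on stated tangles supported in $B$ with boundary on $\partial B \cap \surface_{\triang}$. A natural spanning set of such tangles is given by standard stated arcs connecting pairs of boundary components of $\surface_{\triang} \cap B$. For each such arc $\alpha$, the splitting for $\triang_2$ distributes it across the two lanterns and expresses $\qtr_{\triang_2}(\alpha)$ as a Weyl-ordered monomial in $\{\qz^{\square},\qw^{\square}\}$, while the splitting for $\triang_3$ gives a Weyl-ordered monomial in $\{\qv^{\square},\qx^{\square},\qy^{\square}\}$. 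Inspection of Figure~\ref{fig-32} shows that the geometric dictionary between the two routings of the arc through the bipyramid is precisely the substitution~\eqref{eq-32}, and well-definedness of the induced map on $\qglue$ has already been established in Lemma~\ref{lemma-welldef-32}.

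The main obstacle is the bookkeeping in the case analysis inside $B$: one must enumerate the standard arcs of the bipyramid, compute their splittings in both triangulations, and carefully track the $q^{1/8}$ factors and signs arising from the Weyl-ordering convention~\eqref{eq-Weyl-order}, the modified product~\eqref{eq-new-prod}, and the corner-reduced identities of Lemma~\ref{lemma-tw}. Provided this is executed for each spanning arc, matching proceeds termwise against~\eqref{eq-32}, and the proposition follows with no further algebraic input beyond Lemma~\ref{lemma-welldef-32}.
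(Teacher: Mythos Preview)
Your overall strategy---localize to the move region, reduce to a spanning set of arcs, and check termwise against~\eqref{eq-32}---is the same as the paper's. However, two points need to be made precise before the argument goes through.

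First, the phrase ``split along $\partial B$'' hides the main geometric content. The quantum trace is computed from diagrams on the dual surface $\surface_\triang$, and the portions $\surface_{\triang_2}\cap B$ and $\surface_{\triang_3}\cap B$ are \emph{not homeomorphic}: the first is a six-holed sphere (two lanterns glued along one $A$-circle), the second a six-holed torus (three lanterns glued along three $A$-circles). So a single framed link in $M$ does not automatically produce comparable diagrams on both. The paper resolves this by observing that $\surface_{\triang_2}$ is obtained from $\surface_{\triang_3}$ by surgery along the extra $B$-curve $B_e$; once a diagram on $\surface_{\triang_2}$ is isotoped into a neighborhood of the nine standard arcs (possible since the complement of that neighborhood consists of disks), it is automatically disjoint from the surgery region and can be carried to $\surface_{\triang_3}$ unchanged. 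Your proposal asserts the existence of a ``natural spanning set'' without supplying this isotopy or the transfer mechanism, and without it the termwise comparison has no common domain on which to take place.

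Second, you have implicitly assumed that the six boundary faces of the bipyramid are pairwise distinct in $\triang$. When some of them are glued to each other, splitting along the $A$-circles on $\partial B$ no longer cleanly separates ``inside'' from ``outside''. The paper handles this by doubling the relevant $A$-circles to insert buffer annuli and then applying the counit of $\skeincr(\annulus)$; this step is short but not optional.

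Once these two points are supplied, your outline matches the paper's proof, and the residual case analysis (nine arc types, four state combinations each, one sample computation given in the paper) is indeed the only remaining work.
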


\begin{proof}
Let $\surface_i=\surface_{\triang_i}$ be the dual surfaces. Away from the move region
of Figure~\ref{fig-32}, the dual surfaces are identical. In the move region, the
surfaces are shown in Figure~\ref{fig-dual-32}. The left figure is $\surface_2$, and
the right figure is $\surface_3$. A few arcs on $\surface_3$ are omitted for clarity.
The dashed arc in the back of $\surface_2$ should also be on $\surface_3$, and the 3
new red arcs should go through the hole of $\surface_3$ to form complete circles.

To compare the quantum trace maps, we need to know how diagrams on $\surface_2$ is
isotoped onto $\surface_3$. From Figure~\ref{fig-dual-32}, it is clear that
$\surface_2$ is obtained from $\surface_3$ by a surgery along $B_e$. Given a diagram
on $\surface_2$, an isotopy makes it disjoint from the two disks bounded by the
copies of $B_e$, and reversing the surgery gives a diagram on $\surface_3$ that
defines the same link in $M$.

\begin{figure}[htpb!]
\centering
\begin{tikzpicture}[baseline=(ref.base)]
\begin{scope}[blue]
\draw[dashed] (0.2,1.6) ++(-75:0.3) ..controls (0.2,0).. (0.2,-1.1);
\draw (1.5,0.6) to[out=180,in=90] (1,0) to[out=-90,in=180] (1.5,-0.6)
(-1.5,0.6) to[out=0,in=90] (-1,0) to[out=-90,in=0] (-1.5,-0.6);
\draw ({1.5-0.3*cos(45)},{1+0.4*sin(45)})
..controls +({-0.3*sin(45)*2},{-0.4*cos(45)*2}) and +(0,-0.5).. (0.5,1.6); %top right
\draw ({-1.5+0.3*cos(45)},{1+0.4*sin(45)})
..controls +({0.3*sin(45)*2},{-0.4*cos(45)*2}) and +(0,-0.5).. (-0.1,1.6); %top left
\draw ({1.5-0.3*cos(30)},{1-0.4*sin(30)})
..controls +(-135:0.5) and +(-45:0.5).. ({-1.5+0.3*cos(30)},{1-0.4*sin(30)}); %top across
\draw ({1.5-0.3*cos(45)},{-1-0.4*sin(45)})
..controls +({-0.3*sin(45)*2},{0.4*cos(45)*2}) and +({0.3*sin(45)*2},{0.4*cos(45)*2})
.. ({-1.5+0.3*cos(45)},{-1-0.4*sin(45)})
node(A)[pos=0.55,coordinate]{} node(B)[pos=0.3,coordinate]{}; %bottom across
\draw (-0.1,-1.4) to[out=90,in=-80] (A) (0.5,-1.4) to[out=90,in=-110] (B);
\draw[dashed] ({-1.5-0.3*cos(45)},{-1-0.4*sin(45)}) to[in=100] (A);
\draw[dashed] ({1.5+0.3*cos(45)},{-1-0.4*sin(45)}) to[out=135,in=70] (B);
\end{scope}
\begin{scope}[red]
\foreach \x in {-1.5,1.5} \foreach \y in {-1,1}
\draw (\x,\y)circle[x radius=0.3,y radius=0.4];
\draw (0.5,1.6) arc[radius=0.3,start angle=0,end angle=180];
\draw[dashed] (-0.1,1.6) arc[radius=0.3,start angle=-180,end angle=0];
\draw[dashed] (0.5,-1.4) arc[radius=0.3,start angle=0,end angle=180];
\draw (-0.1,-1.4) arc[radius=0.3,start angle=-180,end angle=0];
\draw[dashed] (1,0) arc[x radius=1,y radius=0.4,start angle=0,end angle=180];
\draw (-1,0) arc[x radius=1,y radius=0.4,start angle=-180,end angle=0];
\end{scope}
\node (ref) at (0,0) {\phantom{$-$}};
\end{tikzpicture}
\qquad
\begin{tikzpicture}[baseline=(ref.base)]
\draw (-0.1,0.85) to[out=30,in=150] (0.3,0.85);
\draw (-0.22,0.95) to[curve through={(-0.1,0.85)..(0.1,0.8)..(0.3,0.85)}] (0.42,0.95);
\draw[dashed] (0.3,0.85) ..controls (0.2,0.1).. (0.3,-0.7) (-0.1,0.85)
..controls (0,0.1).. (-0.1,-0.7);
\begin{scope}[blue]
%\draw[dashed] (0.2,1.6) ++(-75:0.3) ..controls (0.2,0).. (0.2,-1.1);
\draw (0.1,0.85)circle[x radius=0.5,y radius=0.25];
\draw (1.5,0.6) to[out=180,in=90] (1,0) to[out=-90,in=180] (1.5,-0.6)
(-1.5,0.6) to[out=0,in=90] (-1,0) to[out=-90,in=0] (-1.5,-0.6);
\draw ({1.5-0.3*cos(45)},{1+0.4*sin(45)})
..controls +({-0.3*sin(45)*2},{-0.4*cos(45)*2}) and +(0,-0.5).. (0.5,1.6)
node(R)[midway,coordinate]{}; %top right
\draw ({-1.5+0.3*cos(45)},{1+0.4*sin(45)})
..controls +({0.3*sin(45)*2},{-0.4*cos(45)*2}) and +(0,-0.5).. (-0.1,1.6)
node(L)[midway,coordinate]{}; %top left
\draw ({1.5-0.3*cos(30)},{1-0.4*sin(30)})
..controls +(-135:0.5) and +(-45:0.5).. ({-1.5+0.3*cos(30)},{1-0.4*sin(30)}); %top across
\draw ({1.5-0.3*cos(45)},{-1-0.4*sin(45)})
..controls +({-0.3*sin(45)*2},{0.4*cos(45)*2}) and +({0.3*sin(45)*2},{0.4*cos(45)*2})
.. ({-1.5+0.3*cos(45)},{-1-0.4*sin(45)})
node(F)[midway,coordinate]{}
node(A)[pos=0.55,coordinate]{} node(B)[pos=0.3,coordinate]{}; %bottom across
\draw (-0.1,-1.4) to[out=90,in=-80] (A) (0.5,-1.4) to[out=90,in=-110] (B);
\draw[dashed] ({-1.5-0.3*cos(45)},{-1-0.4*sin(45)}) to[in=100] (A);
\draw[dashed] ({1.5+0.3*cos(45)},{-1-0.4*sin(45)}) to[out=135,in=70] (B);
\end{scope}
\begin{scope}[red]
\foreach \x in {-1.5,1.5} \foreach \y in {-1,1}
\draw (\x,\y)circle[x radius=0.3,y radius=0.4];
\draw (0.5,1.6) arc[radius=0.3,start angle=0,end angle=180];
\draw[dashed] (-0.1,1.6) arc[radius=0.3,start angle=-180,end angle=0];
\draw[dashed] (0.5,-1.4) arc[radius=0.3,start angle=0,end angle=180];
\draw (-0.1,-1.4) arc[radius=0.3,start angle=-180,end angle=0];
\draw (0.1,0.8) to[out=-110,in=105] (F) (0.3,0.85) to[out=60,in=165] (R) (-0.1,0.85)
to[out=120,in=15] (L);
%\draw[dashed] (0.1,0.8) to[out=-80,in=75] (F);
\end{scope}
\node (ref) at (0,0) {\phantom{$-$}};
\end{tikzpicture}
\caption{Dual surfaces $\surface_2$ and $\surface_3$ (a few arcs omitted for clarity).}
\label{fig-dual-32}
\end{figure}

Now start with a diagram on $\surface_2$. By an isotopy, we can assume that the
diagram is in a small neighborhood of the standard arcs and the $A$-circles except
the middle one in Figure~\ref{fig-dual-32}. This is possible since the complement of
this neighborhood is disks. We can further assume that near the standard arcs, the
diagram is just parallel strands of standard arcs.

First assume that the 6 ``boundary'' faces of the move region are not paired with
each other. The quantum trace requires splitting along $A$-circles, which creates a
sum over states. The sums outside the move region are identical for $\triang_2$ and
$\triang_3$ by assumption, and the sums inside the move region has finitely many
cases with 9 types of standard arcs and 4 state combinations each. Then it is
straightforward to check that they are given by $\phi_{3,2}$. Here, we only give one
calculation as an example. Consider the arc in the back in Figure~\ref{fig-dual-32}
with $-$ states at both endpoints. On $\surface_2$, it is split into two standard
arcs. The new endpoints must have $-$ states for the diagram to be nonzero in
$\qglue(\triang_2)$, in which case it evaluates to $\qz'\qw''$. On $\surface_3$, it
is a single standard arc $\qv$. They are related by $\phi_{3,2}$ since
\begin{equation}
  \phi_{3,2}(\qz'\qw'')=(\qx''\qy')(\qy''\qx')=(\iunit q^{1/4})^2(\qx\qy)^{-1}
  =(-q^{1/2}\qe^{-1})\qv=\qv.
\end{equation}
As in the proof of Lemma~\ref{lemma-welldef-32}, $\qe=\qx\qy\qv$ can be set to
$-q^{1/2}$ since it commutes with the image of $\phi_{3,2}$. The other arcs are
similar.

If there are extra face pairings in the move region, we can double the corresponding
$A$-curves to create a buffer zone, which can be absorbed using the counit of the
annulus. The remainder of the argument is the same.
\end{proof}

\subsection{Compatibility with 2--0 moves}
\label{sec-qtr-20}

\begin{figure}[htpb!]
\centering
\begin{tikzpicture}
  \path (-1,0)coordinate(3) (1,0)coordinate(2) (0,1.2)coordinate(0) (0,-1.2)
  coordinate(4);
%\path (3) node[left]{3} (2) node[right]{2} (0) node[above]{0} (4) node[below]{4};
\draw (3) -- (4) -- (2) -- (0) -- cycle -- (2);
\draw[<-] (1.5,0) -- node[midway,above]{2--0} (3,0);
\begin{scope}[xshift=4.5cm]
  \path (-1,0)coordinate(3) (1,0)coordinate(2) (0,1.2)coordinate(0) (0,-1.2)
  coordinate(4);
\draw (3) -- (4) -- (2) -- (0) -- cycle (3) to[out=-15,in=-165] (2);
\draw[dashed] (3) to[out=15,in=165] (2) (0) -- (4);
\end{scope}
\draw[->] (6,-0.5) -- node[midway,above,sloped]{front} +(1.5,-0.5);
\draw[->] (6,0.5) -- node[midway,above,sloped]{back} +(1.5,0.5);
\begin{scope}[xshift=9cm,yshift=-1.5cm]
  \path (-1,0)coordinate(3) (1,0)coordinate(2) (0,1.2)coordinate(0) (0,-1.2)
  coordinate(4);
\draw (3) -- (4) -- node[midway,below right,inner sep=0pt]{$z'$} (2) --
node[midway,above right,inner sep=2pt]{$z''$} (0) -- cycle (3) to[out=-15,in=-165] (2);
\draw[dashed] (0) -- (4);
\path (0,0)node[right]{$z$};
\end{scope}
\begin{scope}[xshift=9cm,yshift=1.5cm]
  \path (-1,0)coordinate(3) (1,0)coordinate(2) (0,1.2)coordinate(0) (0,-1.2)
  coordinate(4);
\draw (3) -- (4) -- (2) -- node[midway,above right,inner sep=2pt]{$w'$} (0) --
node[midway,above left,inner sep=0pt]{$w''$} cycle (0) -- (4);
\draw[dashed] (3) to[out=15,in=165] (2);
\path (0,0)node[right]{$w$};
\end{scope}
\end{tikzpicture}
\caption{2--0 move.}
\label{fig-20}
\end{figure}
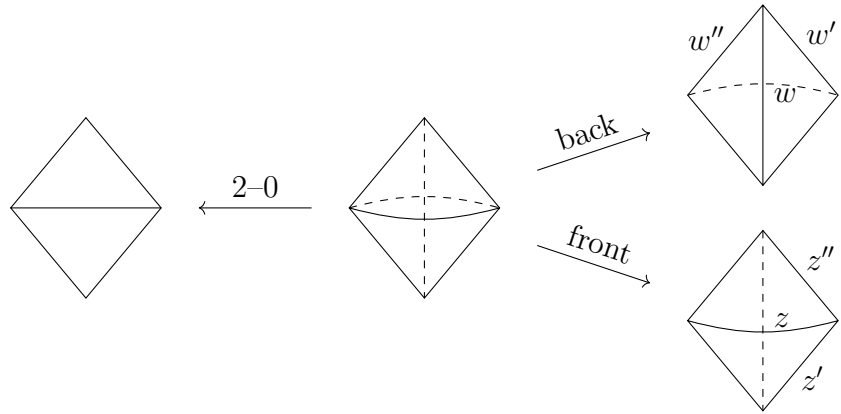

Suppose $\triang_2\to\triang_0$ is a 2--0 move. See Figure~\ref{fig-20}. The
construction is similar to the 3--2 move, so less details will be provided.

\begin{remark}\label{rem-20}
Unlike the 3--2 move, the construction below only works for the even part. This can
be traced to the extra quotient mentioned in Remark~\ref{rem-extra-gen}. If the full
quantum trace is used, then the 2--0 move will be fully compatible, but a lot more
technical details are required. We choose to omit the complete picture to focus on
the 3d-index, which only requires the even part.
\end{remark}

Suppose $M$ has no non-peripheral $\ints/2$-homology, so
presentation~\eqref{eq-qgluev} is valid. The quantum gluing modules are related by
a map
\begin{equation}
\label{phi20}
\phi_{2,0}:\qgluev(\triang_0)\to\qgluev(\triang_2)
\end{equation}
that inserts a tensor factor of $1$ in the slots of the new tetrahedra.

\begin{lemma}
\label{lemma-welldef-20}
$\phi_{2,0}$ is well-defined.
\end{lemma}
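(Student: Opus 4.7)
Proof plan: The map $\phi_{2,0}$ is defined at the level of representatives by lifting $x \in \qgluev(\triang_0)$ to a Laurent polynomial in the generators $\qZ_j, \qZ''_j$ for $j$ a tetrahedron of $\triang_0$ and viewing this polynomial inside $\bigotimes_{j \in \triang_2}\qtorusev\langle\qZ_j,\qZ''_j\rangle$ by inserting the identity in the two tensor slots corresponding to the new tetrahedra $T_z, T_w$ created by the inverse 2--0 move. Well-definedness reduces to showing that both the left-ideal of Lagrangians and the right-ideal of edge relations of $\triang_0$ are sent into the corresponding ideals of $\triang_2$.

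Compatibility with Lagrangians is immediate: for each tetrahedron $j$ of $\triang_0$, the relator $\qZ_j^{-1}+\qZ''_j - 1$ appears verbatim among the generators of the Lagrangian ideal of $\qgluev(\triang_2)$ in the presentation~\eqref{eq-qgluev}. The substantive check is compatibility with edge relations. Let $e_{\mathrm{new}}$ denote the unique edge of $\triang_2$ absent in $\triang_0$, and for every other edge $e_i$ let $\qE_i^{(0)}$ and $\qE_i^{(2)}$ be its edge monomials in the two gluing modules. Reading off Figure~\ref{fig-20}, the new tetrahedra contribute a correction factor $C_i$ so that $\qE_i^{(2)} = \qE_i^{(0)}\cdot C_i$ with $C_i$ a Weyl-ordered monomial in $\qZ^\square, \qW^\square$. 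It therefore suffices to prove $C_i \equiv 1$ modulo the ideals of $\qgluev(\triang_2)$; granting this, $\phi_{2,0}(\qE_i^{(0)} - q) \equiv q C_i^{-1} - q \equiv 0$, as required.

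The plan for the key step $C_i \equiv 1$ is to combine the two new Lagrangian relations $\qZ^{-1}+\qZ''-1$ and $\qW^{-1}+\qW''-1$ with the new edge relation $\qE_{\mathrm{new}}^{(2)} \equiv q$. A useful global sanity check comes from the fact that each new shape parameter sits on exactly two opposite edges of its tetrahedron; multiplying all edge contributions yields
\[
\textstyle\prod_i C_i \cdot \qE_{\mathrm{new}}^{(2)} \;=\; [\qZ\qZ'\qZ'']^2\,[\qW\qW'\qW'']^2 \;=\; q^2,
\]
so $\prod_i C_i \equiv q \equiv \qE_{\mathrm{new}}^{(2)}$ in $\qgluev(\triang_2)$. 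Localizing this global identity to each individual $C_i$ is the main technical obstacle: one must track the Weyl-ordering $q$-factors attached to each edge of the bigon region of Figure~\ref{fig-20} and verify case-by-case that the two new Lagrangians together with $\qE_{\mathrm{new}}^{(2)}-q$ collapse each $C_i$ individually (not merely their product) to $1$.

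The restriction to the even part is essential, as already signaled in Remark~\ref{rem-20}: for the full non-even $\qglue$ module the auxiliary generator $\qy$ of Remark~\ref{rem-extra-gen} with $\qy^2 = \qz^2$ carries a sign that is not pinned down by the 2--0 move, so the individual $C_i$ would only reduce to $\pm 1$ and the argument above would fail without additional data. In the even setting this sign ambiguity disappears, and the cancellation goes through.
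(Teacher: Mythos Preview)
Your framework of correction factors $C_i$ is set up for the wrong indexing set and breaks down at the one edge the paper actually treats. You index over edges of $\triang_2$ other than $e_{\mathrm{new}}$, but the move does not give a bijection between those and edges of $\triang_0$: the edge $E$ of $\triang_0$ (the one shared by the two faces in Figure~\ref{fig-20}) splits into two distinct edges $E',E''$ of $\triang_2$. For $i=E'$ or $E''$ there is no sensible $\qE_i^{(0)}$, and the factor relating $\qE_{E'}^{(2)}$ to $\qE_E^{(0)}$ is \emph{not} a monomial in $\qZ^\square,\qW^\square$ (it involves the outside tetrahedra on one side of the bigon). The paper handles $E$ by a different mechanism entirely: one checks by inspection that $\phi_{2,0}(\qE)=\qE'\qE''\qE_0^{-1}$, a monomial in commuting edge monomials of $\triang_2$, so $\qE-q$ lands in the \emph{right} edge ideal of $\triang_2$ with no Lagrangians needed. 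Your global product identity is therefore a red herring, and your ``main technical obstacle'' of localizing it is not the right problem.

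Where your instinct is correct is for the four remaining edges $02,03,24,34$ on the boundary of the bigon region: there the edge monomials genuinely differ by a factor $C_i\in\{\qZ''\qW',\,\qZ'\qW''\}$, and the paper's proof passes over this with ``most edges of $\triang_0$ are identical to those of $\triang_2$''. For these one does need $C_i\equiv 1$, which is precisely the computation $\qZ''\qW'=1+L''_w+\qW'L_z+(q\qE_0^{-1}-1)\qW''^{-1}$ appearing later in the proof of Proposition~\ref{prop-qtr-20} (and its cyclic twin). So the case-by-case you anticipate is two short identities, not an open-ended verification. One point you should make explicit: since this reduction uses the Lagrangians $L_z,L_w$ (which generate only a \emph{left} ideal), you must explain why $(C_i^{-1}-1)\phi_{2,0}(x)$ still lies in $R_2+L_2$ for arbitrary $x$. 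It does, because $L_z,L_w,\qE_0$ live in the new tensor factors and hence commute with everything in the image of $\phi_{2,0}$ --- but this is the step that actually uses the structure of the map, and it deserves a sentence.
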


\begin{proof}
The description above certainly defines a map between the quantum tori. All
Lagrangians and most edges of $\triang_0$ are identical to those of $\triang_2$. The
only nontrivial check is the edge $\qE$ shared by the two faces on the $\triang_0$
side, which is split into two edges $\qE',\qE''$ on the $\triang_2$ side. By
inspection, we get $\phi_{2,0}(\qE)=\qE'\qE''\qE_0^{-1}$, where $\qE_0=\qZ\qW$ is the
new vertical edge in Figure~\ref{fig-20}. Thus, $\phi_{3,2}(\rideal{\qE-q})$ is still
in $\rideal{\qE_i-q}$.
\end{proof}

\begin{proposition}
\label{prop-qtr-20}
Suppose $M$ has no non-peripheral $\ints/2$-homology. Then the quantum trace map is
compatible with $\phi_{2,0}$ on the even part: $\qtr_{\triang_2}=\phi_{2,0}
\circ\qtr_{\triang_0}$.
\end{proposition}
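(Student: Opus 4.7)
The plan is to follow the template of the proof of Proposition~\ref{prop-qtr-32}: reduce to a local computation in the move region of Figure~\ref{fig-20} and show that the splittings through the two extra lanterns yield the trivial contribution built into $\phi_{2,0}$. The dual surface $\surface_{\triang_2}$ differs from $\surface_{\triang_0}$ by two extra lanterns $\lantern_z, \lantern_w$ for the two tetrahedra that collapse; they are glued to each other along the shared faces of the pillow, with their remaining boundary circles and the rest of the dual surface matching $\surface_{\triang_0}$ outside the move region.

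Given an even diagram $\alpha$ on $\surface_{\triang_0}$, I would first isotope it away from the quadrilateral where the collapse occurs, which is possible since this region is cut by $A$-circles on both sides. The isotoped diagram then lifts to a diagram $\tilde\alpha$ on $\surface_{\triang_2}$ that is disjoint from the interiors of $\lantern_z$ and $\lantern_w$, meeting their boundary circles only in parallel strands.

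To compute $\qtr_{\triang_2}(\tilde\alpha)$, I would split along all $A$-circles. Outside the move region the contributions match those of $\qtr_{\triang_0}(\alpha)$ by construction. Inside $\lantern_z$ and $\lantern_w$, the split pieces are standard annular diagrams as in Figure~\ref{fig-std-ann}, which evaluate to elements of $\groundev$ via the counit by Lemma~\ref{lemma-counit-ev}, with the even hypothesis (preserved through splitting by Lemma~\ref{lemma-split-ev}) forcing the value to be exactly $1$. This matches the insertion of $1$ in the $\lantern_z, \lantern_w$ slots performed by $\phi_{2,0}$.

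The main obstacle lies in tracking the $\iunit$ and $q^{1/8}$ coefficients produced by the new splittings and counits. In the full, non-even setting these combine to a nontrivial element of $\ground$, which is precisely why Remark~\ref{rem-20} restricts the statement to the even part and indicates that the extra generator $\qy$ of Remark~\ref{rem-extra-gen} would be needed otherwise. Face pairings inside the move region can be absorbed as in the proof of Proposition~\ref{prop-qtr-32}, by doubling the relevant $A$-circles to create a buffer annulus to which the counit applies.
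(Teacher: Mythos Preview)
Your outline misidentifies both the local contribution from the new lanterns and the role of the hypothesis, and as a result omits the entire algebraic core of the argument.

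The claim that the split pieces inside $\lantern_z,\lantern_w$ ``evaluate to elements of $\groundev$ via the counit'' is not correct. The counit $\epsilon$ of Lemma~\ref{lemma-counit-ev} is defined only for the annulus $\annulus$; the lantern $\lantern$ has no such counit. What acts on pieces in a lantern is the surjection $\skeincr(\lantern)\onto\qglue(\lantern)$, and that map sends a standard arc with $-$ states to a \emph{generator} $\qz^\square$, not to a scalar. Concretely, a strand that in $\surface_{\triang_0}$ crossed one of the collapsing $A$-circles becomes, after the insertion of Figure~\ref{fig-dual-20}, a pair of standard arcs in $\lantern_z$ and $\lantern_w$; after splitting and evaluating one obtains (in the even part) monomials of the form $(\qZ'\qW'')^{\pm1}=(\qZ''\qW')^{\mp1}$, not $1$. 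So the contribution you need to match against $\phi_{2,0}$'s insertion of $1$ is genuinely nontrivial in $\qtorusev$.

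The actual content of the proposition, which your proposal skips, is the algebraic reduction
\[
(\qZ''\qW')^n-1\ \in\ \rideal{\qE_0-q}+\lideal{L_z,L''_w}
\]
for all $n\in\ints$, obtained by expanding with the Lagrangians $L_z=\qZ^{-1}+\qZ''-1$, $L''_w=\qW''^{-1}+\qW'-1$ and using the new edge relation $\qE_0=\qZ\qW=q$. This is exactly where the hypothesis on non-peripheral $\ints/2$-homology enters: it guarantees the presentation~\eqref{eq-qgluev}, so that the relation $\qE_0-q$ (rather than only its square root $\qe_0+q^{1/2}$) is available in the even module. Your invocation of Lemmas~\ref{lemma-split-ev} and~\ref{lemma-counit-ev} does not touch this; those lemmas only ensure coefficients stay in $\groundev$, they do not force any lantern monomial to equal $1$. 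Without the Lagrangian/edge computation the proof does not close.
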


\begin{proof}
Let $\surface_i=\surface_{\triang_i}$ be the dual surfaces. To go from $\surface_0$
to $\surface_2$, we remove a small neighborhood of the two $A$-circles in the move
region and glue in the surface in Figure~\ref{fig-dual-20}. Note two of the standard
arcs in each removed annuli become standard arcs in the inserted surface, which are
the horizontal ones in Figure~\ref{fig-dual-20}.

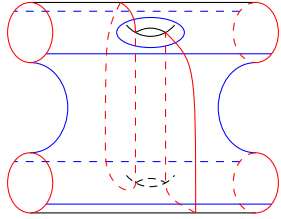
\begin{figure}[htpb!]
\centering
\begin{tikzpicture}[baseline=(ref.base)]
\draw (-1.5,1.4) -- (1.5,1.4) (-1.5,-1.4) -- (1.5,-1.4);
\draw (-0.1,1) to[out=30,in=150] (0.3,1);
\draw (-0.22,1.1) to[curve through={(-0.1,1)..(0.3,1)}] (0.42,1.1);
\draw[densely dashed] (-0.1,-1) to[out=30,in=150] (0.3,-1);
\draw[densely dashed] (-0.22,-0.9) to[curve through={(-0.1,-1)..(0.3,-1)}] (0.42,-0.9);
\begin{scope}[blue]
\draw (0.1,1)circle[x radius=0.45,y radius=0.2];
\draw (1.5,0.6) to[out=180,in=90] (1,0) to[out=-90,in=180] (1.5,-0.6)
(-1.5,0.6) to[out=0,in=90] (-1,0) to[out=-90,in=0] (-1.5,-0.6);
\draw ({1.5+0.3*cos(45)},{1-0.4*sin(45)}) -- ({-1.5+0.3*cos(45)},{1-0.4*sin(45)}); %top
\draw[dashed] ({1.5-0.3*cos(45)},{1+0.4*sin(45)})
-- ({-1.5-0.3*cos(45)},{1+0.4*sin(45)}); %top
\draw ({1.5+0.3*cos(45)},{-1-0.4*sin(45)})
-- ({-1.5+0.3*cos(45)},{-1-0.4*sin(45)}); %bottom
\draw[dashed] ({1.5-0.3*cos(45)},{-1+0.4*sin(45)})
-- ({-1.5-0.3*cos(45)},{-1+0.4*sin(45)}); %bottom
\end{scope}
\begin{scope}[red]
\foreach \y in {-1,1} {
  \draw (-1.5,\y)circle[x radius=0.3,y radius=0.4] (1.5,\y-0.4)
  arc[x radius=0.3,y radius=0.4,start angle=-90,end angle=90];
\draw[dashed] (1.5,\y+0.4)arc[x radius=0.3,y radius=0.4,start angle=90,end angle=270];
}
\draw (0.3,1) ..controls (0.7,0.6).. (0.7,-1.4);
\draw[dashed] (0.7,-1.4) to[out=150,in=-90] (0.3,-1) -- (0.3,1);
\draw (-0.1,1) to[out=90,in=-30] (-0.3,1.4);
\draw[dashed] (-0.3,1.4) ..controls (-0.5,1).. (-0.5,-0.6)
..controls +(0,-0.5) and +(0,-0.2).. (-0.1,-1) -- (-0.1,1);
\end{scope}
\node (ref) at (0,0) {\phantom{$-$}};
\end{tikzpicture}
\caption{Surface inserted into $\surface_0$}\label{fig-dual-20}
\end{figure}

In the calculation of $\qtr_{\triang_0}$, we can double the $A$-circles to make the
annuli in the move region. We can also isotope any diagram to be standard arcs in the
annuli that remain standard after the move. This gives 4 types of standard arcs (2 in
each annuli). They evaluate to $0$ or $1$ by the counit depending on the states.

On the other hand, for $\qtr_{\triang_2}$, these standard arcs evaluate to $0$ or
$(\qZ'\qW'')^{\pm1}=(\qZ''\qW')^{\mp1}$. Again, $\qE_0=\qZ\qW$ is set to $q$ in this
statement. Now we reduce such a monomial using Lagrangians $L_z=\qZ^{-1}+\qZ''-1$
and $L''_w=\qW''^{-1}+\qW'-1$.
\begin{equation}
\begin{split}
\qZ''\qW'&=(1-\qZ^{-1}+L_z)\qW'=(1-\qE_0^{-1}\qW)\qW'+\qW'L_z\\
&=\qW'+q\qE_0^{-1}\qW''^{-1}+\qW'L_z\\
&=1+L''_w+\qW'L_z+(q\qE_0^{-1}-1)\qW''^{-1}.
\end{split}
\end{equation}
Although $q\qE_0^{-1}-1$ does not commute with every term, it commutes with the whole
$\qZ''\qW'$. Therefore, $(\qZ''\qW')^n-1\in\rideal{\qE_0-q}+\lideal{L_z,L''_w}$ for
$n\ge0$. The same is true for $n<0$ using an analogous calculation with $\qZ'\qW''$.
This matches $\phi_{2,0}\circ\qtr_{\triang_0}$.
\end{proof}

%%%%%%%%%%%%%%%%%%%%%%%%%%%%%%%%%%%%%%%%%%%%%%%%%%%%%%%%%%%%%%%%%%%%%%%%%%%% 
%%%%%%%%%%%%%%%%%%%%%%%%%%%%%%%%%%%%%%%%%%%%%%%%%%%%%%%%%%%%%%%%%%%%%%%%%%%%

\section{The 3d-index}
\label{sec.3dindex}

In this section we review the 3d-index of~\cite{DGG1} and its relation to the normal
surfaces of an ideal triangulation~\cite{GHHR}, following the notation of i.b.i.d. \red{what is ibid?}

As usual, we let $\indexRp$ (resp., $\indexR$) denote the rings of formal power series 
(resp., formal Laurent series) in a variable $q^{1/2}$ with integer coefficients.
Note $\indexR$ is naturally a $\groundev$-module.

\subsection{Tetrahedron index}

The DGG 3d-index is a sum over a lattice of products of building blocks, one per
tetrahedron. The building block is the \term{tetrahedron index}
$I_\Delta(m,e)\in \indexR$ of~\cite{DGG1} defined by

\begin{equation}
\label{ID}
I_\Delta(m,e)=\sum_{n=e_-}^\infty
(-1)^n\frac{q^{\frac{1}{2}n(n+1)-\left(n+\frac{1}{2}e\right)m}}{(q;q)_n(q;q)_{n+e}},
\end{equation}
where $e_-=\max\{-e,0\}$ and $(q;q)_n=\prod_{i=1}^n(1-q^i)$ is the quantum factorial
(also known as $q$-Pochhammer symbol).

The tetrahedron index satisfies some symmetries that can be conveniently expressed
by saying that the 3-variable function 
\begin{equation}
J_\Delta(a,b,c)=(-q^{1/2})^{-b}I_\Delta(b-c,a-b)\in\indexR
\end{equation}
is invariant under all six permutations of $a,b,c$, as was found out in
~\cite[Sec.4.7]{GHRS}. These symmetries are reminiscent of the orbit of a shape
of tetrahedron under all permutations (orientation preserving, or not).

In addition, the tetrahedron index satisfies a translation
invariance, a linear $q$-difference equation, a pentagon identity and a
quadratic identity:
\begin{subequations}
\begin{align}
\label{eq-J-diff}
J_\Delta(a-s,b-s,c-s) &=(-q^{1/2})^s J_\Delta(a,b,c) ,\\
\label{eq-J-Lagr}
q^{\frac{1}{2}(a-b)}J_\Delta(a,b,c-1)+q^{\frac{1}{2}(c-b)}J_\Delta(a+1,b,c)
&=J_\Delta(a,b,c),\\
\label{eq-J-penta}
%\text{pentagon}
\sum_{k\in\ints}q^kJ_\Delta(k,a+f,b+d)J_\Delta(k,a+e,c+d)J_\Delta(k,b+e,c+f)
&=J_\Delta(a,b,c)J_\Delta(d,e,f),\\
\label{eq-J-quad}
%\text{quadratic}
\sum_{k\in\ints}q^kJ_\Delta(a+k,c,d)J_\Delta(b+k,c,d)&=q^{-a}\delta_{a,b}.
\end{align}
\end{subequations}

\begin{remark}
\label{rem.qnot1}
It is easy to see that the tetrahedron index~\eqref{ID} is a series convergent
on the unit disk $\abs{q}<1$. But more is true. Replacing $q$ by $q^{-1}$ in the
$q$-hypergeometric sum, it follows that the tetrahedron index satisfies a duality
\begin{equation}
\label{eq-index-duality}
I_\Delta(m,e)(q^{-1})=I_\Delta(-m,-e)(q),\qquad\text{or}\quad
J_\Delta(a,b,c)(q^{-1})=J_\Delta(-a,-b,-c)(q).
\end{equation}
It follows that the tetrahedron index $I_\Delta$ as well as the symmetric form
$J_\Delta$ are $q$-hypergeometric series that are convergent for $\abs{q} \neq 1$.

Define the involution $\iota$ on the algebra of holomorphic functions of
$q^{1/2}\in\cx^\times\setminus S^1$ by
\begin{equation}
\label{iota}
\iota(q^{1/2}) = q^{-1/2} \,.
\end{equation}
Then we can write $\iota J_\Delta(a,b,c)=J_\Delta(-a,-b,-c)$.
% By abuse of notations, we write $\iota:\indexR\to\indexR$.
\end{remark}

\subsection{Basics on normal surfaces}
\label{sub.Idef}

%We now fix an oriented 3-manifold $M$ with $r$ torus boundary components, and an
%1-efficient triangulation $\triang$ of $M$ with $N$ tetrahedra.

As we will see in the later section, there is a close connection between the 3d-index
in the next section and normal surface theory. 
In this section we recall some basics on normal surface theory following the
definitions from Sections 6--8 of~\cite{GHHR} where the reader can find the
references for the results stated below.

Normal surfaces were introduced by Haken and Kneser as a convenient way to deduce
decision problems in 3-dimensional topology to piece-wise linear statements. 
Throughout this section, we fix a connected, oriented 3-manifold whose boundary
consists of $r \geq 1$ torii, and an ideal triangulation $\triang$ of $M$ with $N$
tetrahedra. 
A normal surface is $S$ in $M$ is a surface that intersects each tetrahedron in
triangles or quadrilaterals. The number of triangles (there are four per tetrahedron)
and quadrilaterals (three per tetrahedron) defines a vector in
$\BN^{7N} \subset \BR^{7N}$ that satisfies some linear matching equations
(one per face of $\triang$), where $\BN=\{0,1,2,\ldots\}$ is the set of nonnegative
integers. Casson, Rubinstein and Tollefson noticed that
normal surfaces can be uniquely reconstructed (up to multiples of peripheral surfaces)
by their quadrilateral coordinates, and below we will be using only the quadrilateral
coordinates
\begin{equation}
\label{Sabc}
S=(a_1, b_1, c_1, \ldots , a_N, b_N, c_N) \in \BR^{3N}
\end{equation}
of a normal surface $S$. These coordinates satisfy a system of linear $Q$-matching
equations, one per edge of $\triang$. Since there are $N$ edges, the
$N \times 3N$ matrix of the $Q$-matching equations is given by $(A|B|C)D$ where
$(A|B|C)$ is the $N \times 3N$ matrix of gluing equations of $\triang$ with rows
indexed by the edges of $\triang$, and $D$ is the block $3N \times 3N$ diagonal
matrix of $N$ copies of 
\begin{equation}
\label{Dmat}
\begin{pmatrix} 0 & 1 & -1 \\ -1 & 0 & 1 \\ 1 & -1 & 0\end{pmatrix}
\end{equation}

The vector space $\BR^{3N}$ has a skew-symmetric pairing
$\omega$ on $\BR^{3N}$ defined as follows
\begin{equation}
\label{omegadef}
\omega: \BR^{3N} \times \BR^{3N} \to \BR, \qquad
\omega(x,x') = x^t D x' = \sum_{j=1}^N \left( 
\begin{vmatrix} a_j & a_j' \\ b_j & b_j' \end{vmatrix} +  
\begin{vmatrix} b_j& b_j' \\ c_j & c_j' \end{vmatrix} +   
\begin{vmatrix} c_j& c_j' \\ a_j & a_j' \end{vmatrix} \right) 
\end{equation}
for
\begin{equation}
\label{xx'}
x=(a_1, b_1, c_1, \ldots , a_N, b_N, c_N),
x'=(a'_1, b'_1, c'_1, \ldots , a'_N, b'_N, c'_N) \in   (\BR^3)^N \,.
\end{equation}
The $Q$-matching equations can be expressed in terms of the skew-symmetric pairing
as follows. $S \in \BR^{3N}$ satisfies the $Q$-matching equations if and only
if
\begin{equation}
\label{oEiS}
\omega(E_i,S) = 0, \qquad i=1,\dots,n
\end{equation}
where $E_i$ is the $i$th row of $(A|B|C)$. 

We denote by $Q(\triang;\BR)$ the real $Q$-normal surface
solution space. It is a real vector space that contains a lattice $Q(\triang;\BZ)$
that consists of all $\BZ$-solutions to the matching equations. There are three
types of vectors in $Q(\triang;\BZ)$, namely
\begin{itemize}
\item
  edge solutions $E_i$, for $i=1,\dots,N$ whose coordinates are the
  number of tetrahedra around the edge $E_i$ for each quad type. This vector is
  exactly the exponent vector of the gluing (also known as Neumann--Zagier) equations
  of $\triang$. 
\item
  tetrahedra solutions $\Delta_j$ for $j=1,\dots,N$ whose coordinates consist
  of $1,1,1$ in the $j$-th spots and $0$ in all other spots.
\item
  peripheral solutions $M_k$ and $L_k$ for $k=1,\dots,r$ whose coordinates are
  the exponent vectors of the cusp gluing equations of $\triang$, with a fixed
  basis for the integer homology of each boundary torus. 
\end{itemize}

While it is clear that the tetrahedra solutions satisfy the matching equations, the
fact that the edge and the peripheral solutions satisfy the matching equations is
equivalent to the symplectic property of the gluing equations proven by Neumann--Zagier,
which asserts that all symplectic products of $E_i,M_k,L_k$ for $i=1,\dots,N$
and $k=1,\dots,r$ are zero except that $\omega(L_k,M_k)=2=-\omega(M_k,L_k)$.
Not all of the edge solutions are independent in $Q(\triang;\BZ)$, but one can always
find a suitable subset of $N-r$ independent edge solutions. 

Kang--Rubinstein proved that the lattice $Q(\triang;\BZ)$ has rank $2N+r$, with
a basis that consists of $N-r$ edge solutions, $N$ tetrahedra solutions and $2r$ 
peripheral solutions. On this lattice and its corresponding real vector space
there is a linear function (an analogue of the Euler characteristic of a surface)
\begin{equation}
\label{Sxy}
\chi: Q(\triang;\BR) \to \BR, \qquad \chi(S)=\sum_i -2x_i- \sum_j y_j
\end{equation}
defined for 
\begin{equation}
S=\sum_i x_i E_i + \sum_j y_j \Delta_j + \sum_k (p_k M_k + q_k L_k) \,.
\end{equation}

The vector space $\BR^{3N}$ has a quadratic form, the double-arc function, defined
as follows
\begin{equation}
\label{deltadef}
\delta: \BR^{3N} \to \BR, \qquad \delta(x) = \frac{1}{2} x^t D^\sym x
= \sum_{j=1}^N (a_j b_j + b_j c_j + c_j a_j)
\end{equation}
where $D^\sym$ is the block $3N \times 3N$ diagonal matrix of $N$ copies of
\begin{equation}
\label{Dmatsym}
\begin{pmatrix} 0 & 1 & 1 \\ 1 & 0 & 1 \\ 1 & 1 & 0\end{pmatrix}
\end{equation}
and $x$ is as in~\eqref{xx'}. Its corresponding bilinear form is given by
\begin{equation}
\label{delta2def}
\delta: \BR^{3N} \times \BR^{3N} \to \BR, \qquad
\delta(x,x') = \frac{1}{2} x^t D^\sym x' \,.
%= \sum_{j=1}^N (a_j b_j + b_j c_j + c_j a_j)
\end{equation}
The double-arc function determines the $q^{1/2}$-degree of the tetrahedron index
in its symmetric version $J_\Delta(a,b,c)$; see~\cite[Eqn.(8)]{GHHR}
\begin{equation}
\label{degJ}
\deg J_\Delta(a,b,c) = %\frac{1}{2} ()
\delta(a-m,b-m,c-m)-m, \qquad m=\min\{a,b,c\}
\,.
\end{equation}
$\delta$ satisfies a positivity property: for all $x, x' \in \BR^{3N}_+$ (where
$\BR_+ =[0,\infty)$) we have
\begin{equation}
\label{dsuper}
\delta(x+x') \geq \delta(x) + \delta(x'),
\end{equation}
which is a consequence of the fact that $D$ has nonnegative entries. 

The lattice $Q(\triang;\BZ)$ contains a sublattice $Q_0(\triang;\BZ)$ of solutions of
$Q$-matching equations corresponding to closed surfaces of rank $2N-r$ with basis
$N-r$ edge solutions generating a sublattice $\BE$ and $N$ tetrahedra solutions
generating a sublattice $\BD$. It is this lattice that will play a role in the
3d-index through the bijection
\begin{equation}\label{eq-biject}
\BZ^{N-r} \simeq Q_0(\triang;\BZ)/\BD =(\BE + \BD)/\BD,
\qquad k=(k_1,\dots,k_{N-r}) \mapsto S(k)=\sum_{i=1}^{N-r} k_i E_i \,. 
\end{equation}

\subsection{Definition of the 3d-index}
\label{sub.convergence}

We now have all the ingredients to define the 3d-index. We fix an 1-efficient
triangulation $\triang$ with $N$ tetrahedra on an oriented 3-manifold with $r$ torus
boundary components.

Let
$\qtorusev_\triang:=\bigotimes_{j=1}^N\qtorusev\langle\qZ_j,\qZ_j',\qZ''_j\rangle$.
It has the Weyl-ordered basis $\{[\Zar^S]\mid S\in\ints^{3N}\}$ where
\begin{equation}
\label{SabcZ}
\Zar^S = \prod_{j=1}^N \qZ_j^{a_j} (\qZ_j')^{b_j} (\qZ_j'')^{c_j}, \qquad
S=(a_1, b_1, c_1, \ldots , a_N, b_N, c_N) \in \BZ^{3N}
\end{equation}
Note $[\Zar^{E_i}]=\qE_i$, and $[\Zar^{\Delta_j}]=[\qZ_j\qZ'_j\qZ''_j]$. Note also
that the $q$-commuting relations \eqref{eq-zzp} (after squaring) matches the
definition \eqref{omegadef}.

\begin{definition}
\label{def.Idef}
Consider the unique $\BZ[q^{\pm 1/2}]$-linear map 
\begin{equation}
\label{It1}
I_\triang: \qtorusev_\triang
=\bigotimes_{j=1}^N\qtorusev\langle\qZ_j,\qZ_j',\qZ''_j\rangle \to
\indexR
\end{equation}
given by the following series in $\indexR$
\begin{equation}
\label{3dIdef}
I_\triang([\Zar^{S_0}]) =
\sum_{k \in \BZ^{N-r}}
(-q^{\frac{1}{2}})^{-\chi(S(k))} q^{\frac{1}{2} \omega(S_0, S(k))}
J\left(-S_0 + S(k)\right),
\end{equation}
where $J$ is the extension of $J_\Delta$ from $\BZ^3$ to $\BZ^{3N}$ by
\begin{equation}
J: \BZ^{3N} \to \indexR, \qquad 
S=(a_1, b_1, c_1, \ldots , a_N, b_N, c_N) \mapsto 
\prod_{j=1}^N J_\Delta(a_j,b_j,c_j) \,.
\end{equation}
\end{definition}

\begin{theorem}
\label{thm.3d}
$I_\triang$ is well-defined (i.e., its image are convergent power series in $\indexR$).

If $M$ has no non-peripheral $\ints/2$-homology, then $I_\triang$ descends to a map
\begin{equation}
\label{It2}
I_\triang: \qgluev(\triang) \to \indexR.
\end{equation}
\end{theorem}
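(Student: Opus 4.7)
The plan is to prove the two parts of Theorem~\ref{thm.3d} separately: the convergence of the sum \eqref{3dIdef} will follow from the 1-efficient hypothesis combined with the degree formula \eqref{degJ} for $J_\Delta$, while the descent to $\qgluev(\triang)$ (using the presentation from Proposition~\ref{prop-qgluev}) amounts to verifying that $I_\triang$ vanishes on the right ideal $\rideal{\qE_i-q}$ and on the left ideal $\lideal{\qZ_j^{-1}+\qZ_j''-1}$; these two checks will correspond, respectively, to the translation invariance \eqref{eq-J-diff} and the linear $q$-difference equation \eqref{eq-J-Lagr} of the tetrahedron index.

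For convergence, I would analyze the $q^{1/2}$-degree of the $k$-th summand in \eqref{3dIdef}. The $J$-factor has degree governed by $\delta(-S_0+S(k))$ (with an adjustment by the minimum coordinate) via \eqref{degJ}, while the prefactors $(-q^{1/2})^{-\chi(S(k))}$ and $q^{\frac{1}{2}\omega(S_0,S(k))}$ contribute only linearly in $k$ since $S(k)=\sum_i k_i E_i$. The super-additivity \eqref{dsuper} of $\delta$, combined with its positive-definiteness on the closed-surface lattice $Q_0(\triang;\ints)/\BD\cong\ints^{N-r}$ for 1-efficient triangulations (as established in~\cite{GHHR}), will give that $\delta(-S_0+S(k))$ grows quadratically in $|k|$. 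It follows that the degree of the $k$-th summand tends to $+\infty$ as $|k|\to\infty$, so only finitely many terms contribute below any fixed degree and the sum defines an element of $\indexR$.

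For the right ideal, I would verify $I_\triang((\qE_i-q)[\Zar^{S_0}])=0$. Using $\qE_i[\Zar^{S_0}]=q^{\frac{1}{2}\omega(E_i,S_0)}[\Zar^{E_i+S_0}]$ from \eqref{eq-qtorus-prod}, this reduces to the identity $I_\triang([\Zar^{E_i+S_0}])=q\cdot q^{\frac{1}{2}\omega(S_0,E_i)}I_\triang([\Zar^{S_0}])$. Substituting $k=k'+e_i$ (with $e_i$ the $i$-th standard basis vector) in \eqref{3dIdef}, using $\chi(E_i)=-2$ from \eqref{Sxy}, the orthogonality $\omega(E_i,E_j)=0$ coming from the symplectic property of Neumann--Zagier, and the per-tetrahedron translation invariance \eqref{eq-J-diff}, recovers this identity; the skew-symmetry $\omega(E_i,S_0)+\omega(S_0,E_i)=0$ then produces the vanishing. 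For the left ideal, I would verify $I_\triang([\Zar^{S_0}](\qZ_j^{-1}+\qZ_j''-1))=0$. The three summands shift the $j$-th block $(a_j,b_j,c_j)$ of $-S_0+S(k)$ by $(+1,0,0)$, $(0,0,-1)$, and $(0,0,0)$ respectively, and carry additional $q$-power prefactors from the Weyl ordering. These match exactly, coefficient by coefficient, the three terms of \eqref{eq-J-Lagr} applied to the $j$-th tetrahedron factor of $J(-S_0+S(k))$; the remaining tetrahedron factors and the prefactors outside block $j$ are common to all three summands, so the identity holds termwise in $k$.

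The main obstacle will be the convergence statement. The descent computations reduce to matching Weyl-order coefficients against the tetrahedron-index identities \eqref{eq-J-diff} and \eqref{eq-J-Lagr}, and require no uniformity. In contrast, controlling the degree of the $k$-th summand uniformly in $k\in\ints^{N-r}$ rests crucially on the 1-efficient hypothesis, which through Section~\ref{sub.Idef} ensures that $\delta$ restricts to a proper quadratic function on the closed-normal-surface lattice modulo tetrahedra. This is the deepest structural input and is exactly where normal-surface theory enters the proof in an essential way.
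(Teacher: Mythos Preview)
Your descent argument is essentially the paper's: the right-ideal check is done by the substitution $S\mapsto S+E_i$ together with $\chi(E_i)=-2$ and $\omega(E_i,\cdot)=0$ on $Q_0$, and the left-ideal check is exactly \eqref{eq-J-Lagr}. One omission: since $I_\triang$ is defined on the three-variable torus $\bigotimes_j\qtorusev\langle\qZ_j,\qZ_j',\qZ_j''\rangle$, you must also kill the central relation $[\qZ_j\qZ_j'\qZ_j'']+q^{1/2}$ before invoking the two-variable presentation \eqref{eq-qgluev}; this is where \eqref{eq-J-diff} is actually used (you attributed it to the $\qE_i$ check, where it plays no role).

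The convergence argument, however, has a genuine gap. Your claim that 1-efficiency makes $\delta$ positive-definite on $Q_0(\triang;\ints)/\BD$, forcing quadratic growth of $\delta(-S_0+S(k))$ in $|k|$, is false. A 1-efficient triangulation may well carry embedded closed normal surfaces with a single quad type in each tetrahedron; for such a surface $F$ one has $\delta(F)=0$, and 1-efficiency only guarantees $-\chi(F)\ge 1$. Along these directions the degree of the summand grows \emph{linearly}, from the $(-q^{1/2})^{-\chi}$ prefactor, not quadratically from $\delta$. The paper's argument accounts for this: one first passes to the $\BD$-normalized representatives $S^\ast\in\nats^{3N}$ (so that the degree formula \eqref{degJ} becomes exactly $\delta(S^\ast)$ with no minimum correction), then uses that $Q^\ast_{S_0}(\triang;\nats)$ is a lattice intersected with a rational polyhedral cone, hence has a finite Hilbert basis $\{F_i\}$. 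Superadditivity \eqref{dsuper} reduces the degree estimate to the one-parameter rays $x_iF_i$, and on each ray one has either $\delta(F_i)\ge 1$ (quadratic growth) or $\delta(F_i)=0$ with $-\chi(F_i)\ge 1$ (linear growth). Both the $S^\ast$-normalization and the Hilbert-basis decomposition are missing from your sketch, and without them the properness of the degree function does not follow.
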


\begin{proof}
Using the bijection \eqref{eq-biject}, we can rewrite the definition as
\begin{equation}\label{3dIdef2}
I_\triang([\Zar^{S_0}]) =\sum_{[S] \in Q_0(\triang;\BZ)/\BD}
(-q^{\frac{1}{2}})^{-\chi(S)} q^{\frac{1}{2} \omega(S_0,S)} J(-S_0 + S).
\end{equation}
Note the summand does not depend on the representative $S$, since
$(-q^{\frac{1}{2}})^{-\chi(S)}J(-S_0 + S)$ is well-defined by \cite[Section~8]{GHHR},
and $\omega(\ast,\Delta_j)=0$ by direct calculation.

First, we prove the convergence of our extended 3d-index for $S_0 \in \ints^{3N}$.
The case $S_0\in Q_0(\triang;\ints)$ is covered by \cite[Theorem~8.6]{GHHR}, so we
assume $S_0\notin Q_0(\triang;\ints)$. In this case, we follow the proof of
\cite[Theorem~8.8]{GHHR}.

Let
\begin{equation}
Q_{S_0}(\triang;\ints)=\ints S_0\oplus Q_0(\triang;\ints),\qquad
Q^1_{S_0}(\triang;\ints)=(-S_0)+Q_0(\triang;\ints)\subset Q_{S_0}(\triang;\ints).
\end{equation}
Define $\chi_{S_0}(kS_0+S')=\chi(S')$ for any $k\in\ints$ and $S'\in
Q_0(\triang;\ints)$. Then the definition \eqref{3dIdef2} can be rewritten once again as
\begin{equation}\label{3dIdef3}
I_\triang([\Zar^{S_0}]) =\sum_{[S] \in Q^1_{S_0}(\triang;\BZ)/\BD}
(-q^{\frac{1}{2}})^{-\chi_{S_0}(S)} q^{\frac{1}{2} \omega(S_0,S)} J(S).
\end{equation}
We can choose $\BD$-coset representatives which are convenient for the degree formula
\eqref{degJ}. Given $S\in\ints^{3N}$, there is a unique translation $S^\ast$ by an
element of $\BD$ such that $\min\{a_j,b_j,c_j\}=0$ for every $j=1,\dots,N$. (If $S\in
Q(\triang;\ints)$, then the corresponding normal surface has at most two types of
quads at each tetrahedron). This defines an injective map
$\ints^{3N}/\BD\to\nats^{3N}$. Then the sum in \eqref{3dIdef3} can be replaced by
$S^\ast\in Q^{1\ast}_{S_0}(\triang;\nats):=(Q^1_{S_0}(\triang;\BZ)/\BD)^\ast$, and
the degree of the summand is
\begin{equation}
d(S^\ast)=-\chi_{S_0}(S^\ast)+\omega(S_0,S^\ast)+\delta(S^\ast).
\end{equation}

Let
$Q^\ast_{S_0}(\triang;\nats)=\big(\nats(-S_0)+Q^\ast_0(\triang;\ints)\big)\cap\nats^{3N}$.
This is the intersection of a lattice with a rational polyhedral cone, hence every
vector in $Q^\ast_{S_0}(\triang;\nats)$ is an $\nats$-linear combination of a finite set
$F_i$ of fundamental solutions. % for $i=1,\dots,m$.
Let $I_0$ and $I_1$ denote the set of $i$ that correspond to fundamental solutions
$F_i$ in $Q_0(\triang;\nats)$ and $Q^{1\ast}_{S_0}(\triang;\nats)$ respectively. Then
for $S^\ast\in Q^{1\ast}_{S_0}(\triang;\nats)$, we can write
\begin{equation}
S^\ast=F_k+\sum_{i\in I_0}x_iF_i\qquad
\text{for some $k\in I_1$ and $x_i\in\nats$}.
\end{equation}

The function $d$ is defined for all of $Q^\ast_{S_0}(\triang;\nats)$. By
\eqref{dsuper} and the linearity of the first two terms, $d$ is superadditive, so for
$S^\ast$ as above,
\begin{equation}
d(S^\ast)\ge d(F_k)+\sum_{i\in I_0}d(x_iF_i).
\end{equation}
For $i\in I_0$, consider the term
\begin{equation}\label{fi}
d(x_i F_i) = -\chi(F_i) x_i + \delta(F_i) x_i^2, \qquad x_i \geq 0 \,.
\end{equation}
Since $\triang$ is 1-efficient, for each $i$, we have either $\delta(F_i)=0$ and
$-\chi(F_i) \geq 1$ or $\delta(F_i) \geq 1$. Therefore, $d(x_iF_i) \to \infty$ as
$x_i \to \infty$. Since there are finitely many choices of $F_k,k\in I_1$, and
$d(S^\ast)\to\infty$ if any $x_i\to\infty$, we see $d(S^\ast) \leq D$ has finitely
many solutions $S^\ast$. This implies the convergence of \eqref{3dIdef3}.

Now we show that $I_\triang$ is compatible with the quotient
$\qtorusev_\triang\onto\qgluev(\triang)$. By the homology condition,
$\qgluev(\triang)$ is given by the presentation \eqref{eq-qgluev} (with $\qZ'_j$
restored). This means we need to check
\begin{enumerate}[(a)]
\item $I_\triang(\qE_i[\Zar^{S_0}])=qI_\triang([\Zar^{S_0}])$.
\item $I_\triang([\qZ_j\qZ'_j\qZ''_j][\Zar^{S_0}])=-q^{\frac{1}{2}}I_\triang([\Zar^{S_0}])$.
\item $I_\triang([\Zar^{S_0}](\qZ_j^{-1}+\qZ''_j-1))=0$.
\end{enumerate}

For (a), as remarked after the definition \eqref{eq-qglue-def} of $\qglue(\triang)$,
we only need to consider $i=1,\dotsc,N-r$. Since $\qE_i [\Zar^{S_0}] =
q^{\frac{1}{2}\omega(E_i,S_0)} [\Zar^{S_0+E_i}]$, it follows that
\begin{equation}
\begin{split}
I_\triang(\hat E_i [\Zar^{S_0}]) &= q^{\frac{1}{2}\omega(E_i,S_0)}
I_\triang([\Zar^{S_0+E_i}]) \\
&=q^{\frac{1}{2}\omega(E_i,S_0)}
\sum_{[S] \in Q_0(\triang;\BZ)/\BD}
(-q^{\frac{1}{2}})^{-\chi(S)} q^{\frac{1}{2} \omega(S_0+E_i,S)} J(-S_0-E_i + S) \,.
\end{split}
\end{equation}
Since $E_i \in Q_0(\triang;\BZ)$, shifting $S \mapsto S+E_i$, we obtain that
\begin{equation}
I_\triang(\hat E_i [\Zar^{S_0}]) = q^{\frac{1}{2}\omega(E_i,S_0)}
\sum_{[S] \in Q_0(\triang;\BZ)/\BD}
(-q^{\frac{1}{2}})^{-\chi(S+E_i)} q^{\frac{1}{2} \omega(S_0+E_i,S+E_i)} J(-S_0 + S).
\end{equation}
Definition~\eqref{Sxy} implies that $\chi(E_i)=-2$ and \eqref{oEiS} implies that
$\omega(E_i,S)=0$. This gives that $\omega(S_0+E_i,S+E_i)=\omega(S_0,S)+\omega(S_0,E_i)$.
Together with the above, we get
\begin{equation}
I_\triang(\hat E_i [\Zar^{S_0}]) = q I_\triang([\Zar^{S_0}]),
\end{equation}
which concludes part (a).

For part (b), since $\omega(\Delta_j,\ast)=0$, we have
\begin{equation}
\begin{split}
I_\triang([\qZ_j\qZ'_j\qZ''_j][\Zar^{S_0}])
&= I_\triang([\Zar^{S_0+\Delta_i}]) \\
&=\sum_{[S] \in Q_0(\triang;\BZ)/\BD}
(-q^{\frac{1}{2}})^{-\chi(S)} q^{\frac{1}{2} \omega(S_0,S)} J(-S_0-\Delta_i + S) \,.
\end{split}
\end{equation}
By \eqref{eq-J-diff}, the $-\Delta_i$ can be replaced by a factor of
$-q^{\frac{1}{2}}$, which proves part (b). Finally, part (c) follows similarly from
the linear $q$-difference equation~\eqref{eq-J-Lagr} for the tetrahedron index.

This completes the proof of the theorem.
\end{proof}

\begin{remark}
Still assume that $M$ has no non-peripheral $\ints/2$-homology. There is a bijection
of $Q_0(\triang;\BZ)/\BD$ with the set of embedded generalized normal surfaces
(see~\cite[Eqn.(35)]{GHHR}) which converts the sum~\eqref{3dIdef2} as a generating
function over the set of embedded generalized normal surfaces (i.e., embedded
surfaces that intersect each tetrahedron in a polygon with a number of sides
divisible by $4$). 
\end{remark}

\subsection{Compatibility with 3--2 and 2--0 moves}

In this section we study how the map~\eqref{It2} changes under moves on the
triangulation. 
By the same proof as parts (b) and (c) of the invariance in Theorem~\ref{thm.3d},
the map
\begin{equation}
I_N:\bigotimes_{j=1}^N\qtorusev\langle\qZ_j,\qZ_j',\qZ''_j\rangle\to\indexR,\qquad
I_N([\Zar^S])=J(-S)
\end{equation}
descends to $\bigotimes_{j=1}^N\qgluev(\lantern_j)\to\indexR$. Note the homology
condition is only used to show that there are no extra relations from edges, so the
tetrahedron solutions and Lagrangians are not affected.

Given $S=S(k)\in\BE$, note
\begin{equation}
\Emon_S:=(-q^{\frac{1}{2}})^{-\chi(S)}[\Zar^{-S}]=\prod_{i=1}^{N-r}(q\qE_i^{-1})^{k_i},
\end{equation}
and $q^{\frac{1}{2}\omega(S_0,S)}$ is the Weyl-ordering factor for
$\big[[\Zar^{-S}][\Zar^{S_0}]\big]$. Then we have yet another formula for the index
\begin{equation}
\label{eq-Itriang-def}
I_\triang(x)=\sum_{S\in\BE}I_N(\Emon_S\cdot x).
\end{equation}

\begin{proposition}
\label{prop-I-moves}
Suppose $M$ has no non-peripheral $\ints/2$-homology.
The triangulation index is compatible with the maps~\eqref{phi32} and~\eqref{phi20} 
\begin{equation}
\label{Iphi}
I_{\triang_2}=I_{\triang_3}\circ\phi_{3,2}, \qquad
I_{\triang_0}=I_{\triang_2}\circ\phi_{2,0} \,.
\end{equation}
\end{proposition}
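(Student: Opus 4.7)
The plan is to reduce both compatibility identities in~\eqref{Iphi} to the pentagon identity~\eqref{eq-J-penta} and the quadratic identity~\eqref{eq-J-quad} of the tetrahedron index, exploiting the fact that $\phi_{3,2}$ and $\phi_{2,0}$ act as the identity on tetrahedra outside the move region. Since the formula~\eqref{eq-Itriang-def} presents $I_\triang$ as a lattice sum of products of local tetrahedron-index factors, I will factor out the contributions of tetrahedra outside the move region on both sides and reduce the problem to a local identity for $J_\Delta$ in the move region.

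For the 3--2 compatibility, the edge lattice $\BE_3$ of $\triang_3$ has exactly one more basis direction than $\BE_2$, corresponding to the central edge of the bipyramid in Figure~\ref{fig-32} that is shared by all three new tetrahedra. Given a monomial $x = [\Zar^{S_0}]$ in $\qgluev(\triang_2)$, I split the sum $\sum_{S \in \BE_3}$ defining $I_{\triang_3}(\phi_{3,2}(x))$ into a piece along $\BE_2$ (which already appears in $I_{\triang_2}(x)$) and a one-parameter piece indexed by $k \in \BZ$. The weighting factor $q\qE^{-1}$ from the new direction yields a power $q^k$ in the summand, while the three $J_\Delta$-factors coming from the three new tetrahedra of $\triang_3$ assemble, after reading off their arguments via~\eqref{eq-32} and normalizing with the translation symmetry~\eqref{eq-J-diff} and the $S_3$-permutation invariance of $J_\Delta$, into the left-hand side of~\eqref{eq-J-penta}. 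Invoking the pentagon identity collapses the $k$-sum to the product of two tetrahedron indices, which is precisely the local contribution to $I_{\triang_2}(x)$.

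For the 2--0 compatibility, the edge lattice $\BE_2$ has two more basis directions than $\BE_0$: one for the new interior edge $\qE_0 = \qZ\qW$, and one for the splitting of an edge $\qE$ of $\triang_0$ into the two edges $\qE', \qE''$ of $\triang_2$ (compatible with $\phi_{2,0}(\qE) = \qE'\qE''\qE_0^{-1}$). The two new tetrahedra contribute two $J_\Delta$-factors depending only on these two new lattice directions, while $\phi_{2,0}$ inserts the unit in the new tetrahedral slots. Summation along the $\qE_0$ direction produces a factor $q^k$ together with the two new $J_\Delta$-factors, whose arguments—up to the same symmetries as above—match the left-hand side of~\eqref{eq-J-quad} and collapse to $q^{-a}\delta_{a,b}$. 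The delta function then kills the remaining new lattice direction by identifying the two splittings, while $q^{-a}$ cancels against the Weyl-ordering factor arising from commuting $\Emon_{S}$ past $x$ along the direction $\qE_0$, leaving exactly $I_{\triang_0}(x)$.

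\textbf{Main obstacle.} The substantive difficulty is entirely bookkeeping. One must translate the algebra map $\phi_{3,2}$ from~\eqref{eq-32} into explicit integer shifts of the coordinates $(a,b,c)$ entering $J_\Delta$, carefully track the Weyl-ordering factors $q^{\frac{1}{2}\omega(\cdot,\cdot)}$ (with $\omega$ encoding the block~\eqref{eq-block} of $q$-commutation relations within each tetrahedron) arising when commuting $\Emon_S$ past monomials in the move region, and match the resulting scalar prefactors with the exact argument shapes in~\eqref{eq-J-penta} and~\eqref{eq-J-quad} using the symmetries of $J_\Delta$. As in Proposition~\ref{prop-qtr-32}, if extra face identifications occur in the move region, the argument first localizes to the generic case, though this should not play an algebraic role in the reduction.
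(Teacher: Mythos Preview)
Your proposal is correct and follows essentially the same approach as the paper: both compatibilities are reduced to the pentagon identity~\eqref{eq-J-penta} and the quadratic identity~\eqref{eq-J-quad} by separating out the extra edge directions in $\BE_3$ (resp.\ $\BE_2$) and evaluating those partial sums first. One small correction on the bookkeeping: in the 2--0 case the factor $q^{-a}$ from~\eqref{eq-J-quad} is not a Weyl-ordering factor from commuting $\Emon_S$ past $x$ (since $\phi_{2,0}(x)$ is trivial in the new tetrahedral slots and hence commutes with $\qE_0=\qZ\qW$); rather, after setting $a=b$ and stripping the new-tetrahedron contributions, one has $(q\qE'^{-1})^a(q\qE''^{-1})^a = q^a\cdot(q\qE^{-1})^a$ on the old tetrahedra, and it is this extra $q^a$ that $q^{-a}$ cancels.
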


\begin{proof}
This is a straightforward generalization of Theorem~4.3 and A.1 in \cite{GHRS}, so we
omit some details.

First consider the 3--2 move. We use the notations of Section~\ref{sec-qtr-32}. As in
\cite[Theorem~A.1]{GHRS}, we can choose the independent edges of $\triang_2$ such
that adding the new edge $E_0$ of $\triang_3$ gives an independent set of edges in
$\triang_3$. Thus,
\begin{equation}
I_{\triang_3}(\phi_{3,2}(x))=
\sum_{S\in\BE_2}\sum_{k\in\ints}
I_{N+1}\left((q\qE_0^{-1})^k\cdot\phi_{3,2}(\Emon_S\cdot x)\right).
\end{equation}

Let $x$ be a monomial and write $\Emon_S\cdot
x=[\qZ^a\qZ'^b\qZ''^c][\qW^d\qW'^e\qW''^f]\tilde{x}\in\qgluev(\triang_2)$. By
definition, $\phi_{3,2}(\Emon_S\cdot x)$ is the monomial
$[\qX'^{a+f}\qX''^{b+d}][\qY'^{b+e}\qY''^{c+f}][\qV'^{c+d}\qV''^{a+e}]\tilde{x}$.
Then the sum over $k$ can be evaluated using the pentagon identity
\eqref{eq-J-penta}, recalling that $\qE_0=\qX\qY\qV$. The remaining sum is exactly
$I_{\triang_2}(x)$.

The 2--0 move is similar. We use the notations of Section~\ref{sec-qtr-20}. Recall
the shared edge in the move region of $\triang_0$ is denoted $E$, which splits into
two edges $E',E''$ in $\triang_2$, and the additional central edge in $\triang_2$ is
denoted $E_0$.

Using \cite[Theorem~4.3]{GHRS}, we can choose $E$ to be part of the independent edges
of $\triang_0$, and a set of independent edges for $\triang_2$ can be obtained by
replacing $E$ with $E_0,E',E''$. Write
\begin{equation}
\BE_0=\ints E\oplus\tilde{\BE}_0,\qquad
\BE_2=\ints E_0\oplus\ints E'\oplus \ints E''\oplus\tilde{\BE}_0.
\end{equation}
Then
\begin{align}
I_{\triang_0}(x)&=
\sum_{S\in\tilde{\BE}_0}\sum_{a\in\ints}I_N\left((q\qE^{-1})^a\cdot \Emon_S\cdot x\right),\\
I_{\triang_2}(\phi_{2,0}(x))&=
\sum_{S\in\tilde{\BE}_0}\sum_{(a,b)\in\ints^2}\sum_{k\in\ints}
I_{N+2}\left((q\qE_0^{-1})^k(q\qE'^{-1})^a(q\qE''^{-1})^b\cdot
\phi_{2,0}(\Emon_S\cdot x)\right).
\end{align}
In $I_{\triang_2}(\phi_{2,0}(x))$, the sum over $k$ can be evaluated using
\eqref{eq-J-quad}, resulting in $q^{-a}\delta_{a,b}$, deleting $\qZ$ and $\qW$ from
$\qE'$ and $\qE''$, and combining them into $(q\qE^{-1})^a$. This agrees with
$I_{\triang_0}(x)$.
\end{proof}

Combining this with Propositions~\ref{prop-qtr-32} and \ref{prop-qtr-20}, we get
the commutative diagrams of Equation~\eqref{movediag}.

\begin{remark}\label{rem-extra-sum}	
Now that all the pieces are defined, we sketch the generalization of the 3d-index to
manifolds with non-peripheral $\ints/2$-homology.

In this case, by \cite[Theorem~7.1]{GHHR}, $Q_0(\triang;\ints)$ is only a subgroup of
the lattice of normal surfaces $N(\triang;\ints)$, and the quotient is isomorphic to
the cokernel of $H_1(\partial M;\ints/2)\to H_1(M;\ints/2)$, which is finite and
independent of the triangulation $\triang$.

The first instances of the homology restriction are Lemma~\ref{lemma-basis-change}
and Proposition~\ref{prop-qgluev}. The lattice $\Lambda$ there is isomorphic to
$Q_0(\triang;\ints)/\BD$, and we can show that $\Lambda^\perp$ is isomorphic to
$N(\triang;\ints)/\BD$, and $\Lambda^\perp=(\frac{1}{2}\Lambda)\cap\ints^{2N}$. The
even edge relation $\qE_i-q=[\Zar^{E_i}]-(-q^{1/2})^{-\chi(E_i)}$ should be
generalized to $[\Zar^{S}]-(-q^{1/2})^{-\chi(S)}$ for all $S\in N(\triang;\ints)/\BD$
when there is non-peripheral $\ints/2$-homology.

Proposition~\ref{prop-qtr-20} also has the homology restriction, but it follows from
the presentation in Proposition~\ref{prop-qgluev}. As mentioned in
Remark~\ref{rem-20}, 2--0 move can be modified to work in the full $\qglue(\triang)$,
so the homology restriction is naturally dropped in this case.

Instead of summing over $Q_0(\triang;\ints)/\BD$ in \eqref{3dIdef2}, the
generalization of the index should sum over $N(\triang;\ints)/\BD$. By partitioning
into cosets of $N(\triang;\ints)/Q_0(\triang;\ints)$, we see that the generalization
is an extra finite sum, so the convergence and compatibility with moves are
unaffected. Since the presentation of $\qgluev(\triang)$ has the same modification,
the descent to $I_M$ also works.
\end{remark}

\subsection{Insertion of peripheral elements}
\label{sec-peri-index}

Recall the vectors $M_k,L_k\in\ints^{3N}$ defined in Section~\ref{sub.Idef}, which
are associated to a basis $\mu_k,\lambda_k$ of $H_1(\partial M;\ints)$ . We get a
vector $C_\gamma$ for each $\gamma\in H_1(\partial M;\ints)$ by making linear
combinations. Define
\begin{equation}
\qm_\gamma=[\vec{z}^{C_\gamma}]\in
\bigotimes_{j=1}^N\qtorus\langle\qz_j,\qz_j',\qz''_j\rangle.%\onto\qglue(\triang).
\end{equation}
By the symplectic relations among $M_k,L_k$, the monomials $\qm_\gamma$ from
different boundary components commute with each other. Therefore, we focus on a
single component in the following.

If $\gamma$ reduces to 0 in $H_1(M;\ints/2)$, then $\qm_\gamma$ is in the even part.
Then the DGG index with class $\gamma$ is simply $I_\triang(\qm_\gamma)$. In terms of
charges, we have Equation~\eqref{I2DGG}. If we choose $\lambda$ to be the homological
longitude. Then $m$ can take half integer values.

Although $\qm_\gamma$ is not technically in $\skein(M)$, it can be included by an
extension. This requires a quick review of the skein algebra of the torus $T^2$.

Model the torus as $T^2=\reals^2/\ints^2$ such that the curves in the directions
$(1,0)$ and $(0,1)$ are the longitude $\lambda$ and meridian $\mu$. Given a nonzero
$\gamma\in H_1(T^2;\ints)$, we can write $\gamma=s\lambda+t\mu$. If $s,t$ are
coprime, define $K_\gamma$ as the simple closed curve in the direction $(s,t)$. More
generally, if $\gcd(s,t)=d$, then $K_\gamma\in\skein(T^2)$ is defined as
$T_d(K_{s/d,t/d})$, where $T_n(x)$ is the Chebyshev polynomial given by
\begin{equation}
T_0(x)=2,\qquad T_1(x)=x,\qquad T_n(x)=xT_{n-1}(x)-T_{n-2}(x),\quad n\ge 2.
\end{equation}
These elements together with $1$ form a basis of $\skein(T^2)$.
%A key property of the Chebyshev polynomial is $T_n(t+t^{-1})=t^n+t^{-n}$.
This is used in \cite{FG:torus} to obtain an algebra embedding
\begin{equation}\label{eq-torus-embed}
\skein(T^2)\embed\qtorus\langle \qm_\lambda,\qm_\mu\rangle,\qquad
K_\gamma\mapsto\qm_\gamma+\qm_\gamma^{-1}.
%[\qm_\lambda^s \qm_\mu^t]+[\qm_\lambda^s \qm_\mu^t]^{-1}.
\end{equation}
% Here, $\qm_\mu,\qm_\lambda$ are simply generators of a quantum torus with
% $q$-commuting relations $\qm_\lambda \qm_\mu=q^{1/2}\qm_\mu \qm_\lambda$.
% Of course, we can choose them to be the monomials defined above.
% Note $[\qm_\lambda^s\qm_\mu^t]=\qm_\gamma$.

Now back to the 3-manifold $M$. For each torus boundary component, a choice of the
longitude $\lambda$ and meridian $\mu$ defines a map $\skein(T^2)\to\skein(M)$. In
fact, $\skein(T^2)$ acts on $\skein(M)$ since gluing $T^2\times[-1,1]$ to the torus
boundary does not change the manifold. We choose to glue along $T^2\times\{-1\}$ so
that the action is on the left.

A related action is defined on the quantum gluing module. By the symplectic
properties mentioned in Section~\ref{sub.Idef}, the peripheral monomials $\qm_\gamma$
commutes with the edges $\qe_i$. Thus, the quantum torus $\qtorus\langle
\qm_\lambda,\qm_\mu\rangle$ acts on $\qglue(\triang)$ from the left.

\begin{theorem}
\label{thm-peri}	
The actions above are related by the quantum trace and the embedding
\eqref{eq-torus-embed}. In other words,
%$\qtr_\triang(K_\gamma)=\qm_\gamma+\qm_\gamma^{-1}$. More generally,
\begin{equation}
\qtr_\triang(K_\gamma\cdot\alpha)=(\qm_\gamma+\qm_\gamma^{-1})\cdot\qtr_\triang(\alpha).
\end{equation}
\end{theorem}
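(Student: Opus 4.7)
The plan is to prove the identity in two stages: first, establish the base case $\qtr_\triang(K_\gamma) = \qm_\gamma + \qm_\gamma^{-1}$ when $\alpha$ is the empty element and $\gcd(s,t)=1$; second, promote it to the full statement via a disjoint-supports factorization, and extend to non-coprime $\gamma$ via the Chebyshev recursion.

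For the base case, I would isotope $K_\gamma$ onto the cusp cross-section of $\surface_\triang$ in minimal position with respect to the cusp triangulation. The A-circles meeting the cusp are precisely the boundaries of triangles in the cusp triangulation, so they cut $K_\gamma$ into a collection of standard arcs, each sitting inside a single lantern adjacent to the cusp. Applying $\cut_A$ produces a sum over state assignments at each intersection, but the quotient $\skeincr(\lantern) \onto \qglue(\lantern)$, which sends a standard arc with states $(\mu,\nu)$ to $\delta_{\mu\nu}(\qz^\square)^{-\mu}$, forces state matching within each lantern. Propagating this around the closed loop, only the two globally constant assignments (all $+$ or all $-$) survive. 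The exponents of the shape parameters $\qz_j^\square$ collected along these two terms are, by construction, the coefficients of the cusp gluing equation vector $C_\gamma$ dual to $\gamma$, so the surviving contributions are exactly the Weyl-ordered monomials $\qm_\gamma^{-1}$ and $\qm_\gamma$.

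For the factorization step, I would place $K_\gamma$ in a cusp collar $T^2 \times [-1,0]$ and isotope $\alpha$ into the complement of this collar. After $\cut_A$, the images of $K_\gamma$ and $\alpha$ occupy disjoint tensor factors of $\bigotimes_j \skeincr(\lantern_j)$, and their boundary-triangle gradings $d_E$ are supported on disjoint edges, so the $q$-correction in the modified product \eqref{eq-new-prod} is trivial. Hence $\qtr_\triang(K_\gamma \cdot \alpha) = \qtr_\triang(K_\gamma) \cdot \qtr_\triang(\alpha)$, and combined with the base case this proves the claim for simple curves. For $\gamma$ with $\gcd(s,t)=d>1$, write $K_\gamma = T_d(K_{\gamma/d})$ and apply the simple-curve case iteratively to powers of $K_{\gamma/d}$; the Chebyshev identity $T_d(u+u^{-1}) = u^d + u^{-d}$ (valid for any invertible $u$ commuting with its inverse, applied to $u = \qm_{\gamma/d}$) then yields $T_d(\qm_{\gamma/d} + \qm_{\gamma/d}^{-1}) = \qm_\gamma + \qm_\gamma^{-1}$.

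The main obstacle is the careful Stage~1 bookkeeping: verifying that the two surviving state assignments reassemble, after all Weyl-ordering corrections from propagating arcs through successive lanterns, into exactly the monomials $[\vec{z}^{\pm C_\gamma}]$ with no stray $q$-powers. This is essentially a quantum Neumann--Zagier calculation that matches the arcwise shape-parameter contributions against the cusp gluing equation exponents, and it is the quantum analogue of the classical statement that a peripheral holonomy is the monomial in shape parameters prescribed by the cusp equation.
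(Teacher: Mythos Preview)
Your factorization step has a fundamental error. The tensor factors of $\bigotimes_j\skeincr(\lantern_j)$ are indexed by tetrahedra, and every ideal tetrahedron meets the cusp at all four of its ideal vertices; hence each lantern $\lantern_j$ contains both arcs of $K_\gamma$ (near its boundary triangles) and arcs of $\alpha$. After $\cut_A$, the images of $K_\gamma$ and $\alpha$ therefore live in the \emph{same} tensor factors, where the product is genuinely noncommutative and no spatial separation is available. The paper resolves this not by separating supports but by an algebraic argument: a sequence of height-exchange lemmas (Lemma~\ref{lemma-htex}, Corollary~\ref{cor-qcomm}, Lemma~\ref{lemma-substd}) establishes that the arcs of $D_\gamma$ $q$-commute past the standard-arc representatives of $\alpha$ up to ``off-diagonal'' correction terms, which are then shown to vanish in $\qglue(\triang)$.

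Your Stage~1 also has a gap you underestimate. The arcs of $D_\gamma$ are not all standard: even for the simplest $\gamma$ (sides or diagonal of a fundamental domain of $\triang_\partial$), pushing off the $1$-skeleton produces both ``small counterclockwise'' arcs (which become standard) and ``big clockwise'' arcs (which become \emph{substandard}, i.e.\ parallel to a standard arc but one click away at each endpoint). The rule $(\mu,\nu)\mapsto\delta_{\mu\nu}(\qz^\square)^{-\mu}$ applies only to standard arcs; substandard arcs must be twisted using Lemma~\ref{lemma-tw}, and this produces extra terms whose vanishing is the content of Lemma~\ref{lemma-substd}. The paper moreover reduces not to all primitive $\gamma$ but to three specific generators $K_m,K_l,K_{m+l}$ of $\skein(T^2)$, chosen so that $D_\gamma$ has at most one arc in each boundary triangle---a simplification essential to the height-exchange bookkeeping and to the final verification that the residual $q$-power $q^{w_s}$ equals the Weyl-ordering factor of $\qm_\gamma^{\pm1}$.
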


The classical trace obviously has this property, so by the classical limit given in
\cite[Theorem~1.1]{GY}, the quantum trace is given by the same formula up to
$q^{1/8}-q^{-1/8}$. The nontrivial part of the theorem is proving that all quantum
corrections vanishes, which is not obvious since certain intermediate stages could
have correction terms (see e.g.\ Lemma~\ref{lemma-htex}). The proof 
is given in Appendix~\ref{sec-peri-proof}. 

The theorem implies that the composed index $I_M:\skeinev(M)\to\indexR$
from~\eqref{Icomp} can be extended to
\begin{equation}
\label{eq-peri-ext}
\qtorus\langle \qm_\mu^2,\qm_\lambda\rangle\otimes_{\skein'(T^2)}\skeinev(M)
\to\indexR.
\end{equation}
Here, $\skein'(T^2)$ is the subalgebra of $\skein(T^2)$ where the homology grading is
even in $\mu$ but arbitrary in $\lambda$, which is chosen to be a homological
longitude. This corresponds to the index $\mathcal{I}_{M+\qvar{O}_K}(m,e)$
conjectured by \cite{AGLR}, where $m,e$ defines the monomial in
$\qtorus\langle\qm_\mu^2,\qm_\lambda\rangle$ in the same way as the DGG index, and
$K\in\skeinev(M)$.

\subsection{Chirality}

In this section we discuss the behavior of our maps under the (involution) operation
of reversing the orientation of the ambient 3-manifold. As always, $M$ denotes a
compact oriented 3-manifold with torus boundary and $\triang$ an ideal triangulation
of $M$. Denote by $-M$ the same manifold with the opposite orientation.

Starting with the skein module, we see the crossing in \eqref{eq-skein} is flipped
when the orientation is reversed. Recalling the involution $\iota$ from~\eqref{iota},
there is an $\iota$-conjugate linear map
$\skein(M)\to\skein(-M)$ sending a framed link to itself, denoted also by $\iota$ by
abuse of notation. For the stated skein algebras and corner-reduced skein modules, to
preserve the defining relations \eqref{eq-arcs}, \eqref{eq-stex}, and
\eqref{eq-crdef}, it is also necessary to flip the states. Then we also get an
$\iota$-conjugate algebra map $\iota:\skein(\surface)\to\skein(-\surface)$ and an
$\iota$-conjugate linear map $\iota:\skeincr(\surface)\to\skeincr(-\surface)$.

Now consider the quantum gluing modules. $-M$ has a triangulation $-\triang$ obtained
from $\triang$ by relabeling the vertices to change the orientations of all
tetrahedra. This can be done systematically by exchanging vertices 1 and 3 for all
tetrahedra, which has the effect $\qz\leftrightarrow\qz''$. Considering the state
flipping in skein modules, the correct correspondence is
\begin{equation}
\iota(\qz)=\qz^{\prime\prime-1},\quad \iota(\qz'')=\qz^{-1}.
\end{equation}
The inverse is also necessary for the Lagrangian equation. Putting it all together,
we get an $\iota$-conjugate map $\iota:\qglue(\triang)\to\qglue(-\triang)$.

These maps can be combined into the following commutative diagram.
\begin{equation}
\begin{tikzcd}
\skeinev(M) \arrow[r,"\qtr_\triang"] \arrow[d,"\iota"] &
\qgluev(\triang) \arrow[r,"I_\triang"] \arrow[d,"\iota"] &
\indexR \arrow[d,"\iota"] \\
\skeinev(-M) \arrow[r,"\qtr_{-\triang}"] &
\qgluev(-\triang) \arrow[r,"I_{-\triang}"] & \indexR
\end{tikzcd}
\end{equation}
The left square commutes by definition and the discussion above. The right square
commutes by definition and the duality \eqref{eq-index-duality}. This means
$I_{-M}\circ\iota=\iota\circ I_M$.

\cite{AGLR} conjectured a symmetry
\begin{equation}
I_M(\qm_\gamma\otimes K)=I_M(\qm_\gamma^{-1}\otimes K)
\end{equation}
for the peripheral-extended 3d-index \eqref{eq-peri-ext}. We did not find any
theoretical reason for this to hold in general. In the specific example of $4_1$ that
\cite{AGLR} considered, it can be explained by the reversibility of $4_1$. On a
related note, the restriction to DGG index, that is, when $K=1$, satisfies
\begin{equation}
I_M(\qm_\gamma^{-1})=\iota I_M(\qm_\gamma)=I_M(\qm_\gamma)(q^{-1}).
\end{equation}
This is obtained by replacing $k\leftrightarrow-k$ in the sum in \eqref{3dIdef}.

\section{Example: The $4_1$ knot}
\label{sec.compute}

Let $M$ be the complement of the $4_1$ knot. The default triangulation $\triang$ of
$M$ with isometry signature \texttt{cPcbbbiht\_BaCB} has two tetrahedra $T_0$ and
$T_1$ shown in Figure~\ref{fig-41-triang} using the convention from
Figure~\ref{fig-smoothL}. There are 4 face pairings labeled A,B,C,D.

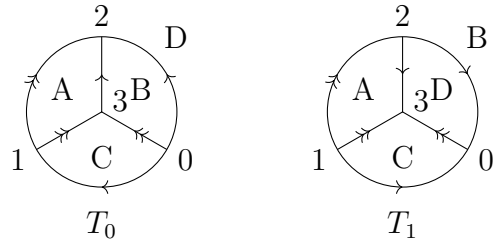
\begin{figure}[htpb!]
\centering
\begin{tikzpicture}
\tikzmath{\r=1;\s=0.6;}
\draw (0,0) circle(\r) node[anchor=-150]{3};
\draw (0,0) -- (-150:\r)node[anchor=30]{1} (0,0) -- (-30:\r)
node[anchor=150]{0} (0,0) -- (90:\r)node[above]{2};
\path[-o-={0.5}{>}] (0,0) -- (90:\r);
\path[-o-={0.5}{>}] (-30:\r) arc[radius=\r,start angle=-30,delta angle=120];
\path[-o-={0.5}{>}] (-30:\r) arc[radius=\r,start angle=-30,delta angle=-120];
\path[-o-={0.5}{>>}] (-30:\r) -- (0,0);
\path[-o-={0.5}{>>}] (-150:\r) -- (0,0);
\path[-o-={0.5}{>>}] (-150:\r) arc[radius=\r,start angle=-150,delta angle=-120];
\path (0,-\r-0.5)node{$T_0$} (30:\s)node{B} (150:\s)node{A} (-90:\s)
node{C} (45:\r+0.4)node{D};
\begin{scope}[xshift={\r*4cm}]
\draw (0,0) circle(\r) node[anchor=-150]{3};
\draw (0,0) -- (-150:\r)node[anchor=30]{1} (0,0) -- (-30:\r)
node[anchor=150]{0} (0,0) -- (90:\r)node[above]{2};
\path[-o-={0.5}{>}] (90:\r) -- (0,0);
\path[-o-={0.5}{>}] (90:\r) arc[radius=\r,start angle=90,delta angle=-120];
\path[-o-={0.5}{>}] (-150:\r) arc[radius=\r,start angle=-150,delta angle=120];
\path[-o-={0.5}{>>}] (-30:\r) -- (0,0);
\path[-o-={0.5}{>>}] (-150:\r) -- (0,0);
\path[-o-={0.5}{>>}] (-150:\r) arc[radius=\r,start angle=-150,delta angle=-120];
\path (0,-\r-0.5)node{$T_1$} (30:\s)node{D} (150:\s)node{A} (-90:\s)node{C} (45:\r+0.4)
node{B};
\end{scope}
\end{tikzpicture}
\caption{\texttt{SnapPy} triangulation of the $4_1$ knot.}
\label{fig-41-triang}
\end{figure}

The dual surface $\surface_\triang$ is obtained in \cite[Section~6.2]{GY} and
reproduced in Figure~\ref{fig-41-lantern}, which is $\surface_\triang$ split along
the $A$-circles. The region labeled $i+4j$ in black corresponds to the vertex $i$ in
$T_j$, and dually, the red $A$-circle labeled $i+4j$ is the face opposite to vertex
$i$ in $T_j$. The small red labels around the $A$-circles indicate how the lanterns
glue together to form $\surface_\triang$. The arrow on a blue standard arc indicates
which edge of $\triang$ it goes around.

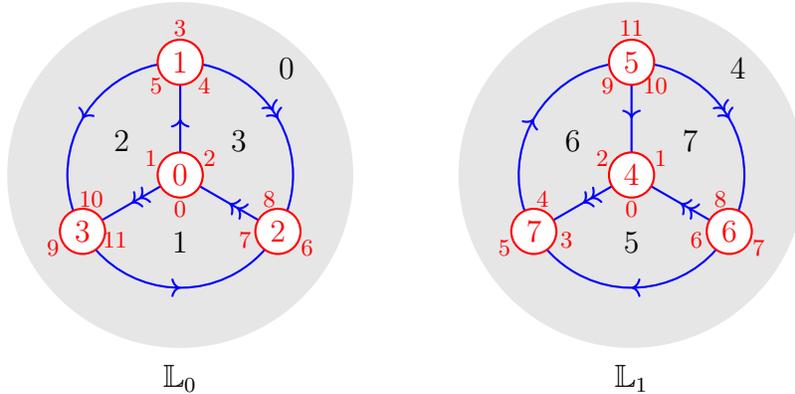
\begin{figure}[htpb!]
\centering
\begin{tikzpicture}[baseline=(ref.base)]
\tikzmath{\r=1.5;\s=0.3;\d=0.45;\e=\r*0.6;}
\fill[gray!20] (0,0)circle(\r+0.8);
\begin{scope}[thick,blue,radius=\r]
\draw[-o-={0.5}{>}] (90:\r) arc[start angle=90,delta angle=120];
\draw[-o-={0.5}{>}] (-150:\r) arc[start angle=-150,delta angle=120];
\draw[-o-={0.5}{>>}] (90:\r) arc[start angle=90,delta angle=-120];
\draw[-o-={0.5}{>}] (0,0) -- (90:\r);
\draw[-o-={0.5}{>>}] (0,0) -- (-150:\r);
\draw[-o-={0.5}{>>}] (-30:\r) -- (0,0);
\end{scope}
\draw[thick,red,fill=white,radius=\s] (90:\r)circle node{1} (-30:\r)
circle node{2} (-150:\r)circle node{3} (0,0)circle node{0};
\path[red,font=\scriptsize] (90:\r) +(90:\d)node{3} +(-45:\d)node{4} +(-135:\d)node{5}
(-30:\r) +(-30:\d)node{6} +(-165:\d)node{7} +(105:\d)node{8}
(-150:\r) +(-150:\d)node{9} +(75:\d)node{10} +(-15:\d)node{11}
(0,0) +(-90:\d)node{0} +(150:\d)node{1} +(30:\d)node{2};
\path (30:\e)node{3} (150:\e)node{2} (-90:\e)node{1} (45:\r+0.5)node{0};
\path (-90:\r+1.2)node{$\lantern_0$};
\begin{scope}[xshift=\r*4cm]
\fill[gray!20] (0,0)circle(\r+0.8);
\begin{scope}[thick,blue,radius=\r]
\draw[-o-={0.5}{>}] (-30:\r) arc[start angle=-30,delta angle=-120];
\draw[-o-={0.5}{>}] (-150:\r) arc[start angle=-150,delta angle=-120];
\draw[-o-={0.5}{>>}] (90:\r) arc[start angle=90,delta angle=-120];
\draw[-o-={0.5}{>}] (90:\r) -- (0,0);
\draw[-o-={0.5}{>>}] (0,0) -- (-150:\r);
\draw[-o-={0.5}{>>}] (-30:\r) -- (0,0);
\end{scope}
\draw[thick,red,fill=white,radius=\s] (90:\r)circle node{5} (-30:\r)
circle node{6} (-150:\r)circle node{7} (0,0)circle node{4};
\path[red,font=\scriptsize] (90:\r) +(90:\d)node{11} +(-45:\d)node{10} +(-135:\d)
node{9} (-30:\r) +(-30:\d)node{7} +(-165:\d)node{6} +(105:\d)node{8}
(-150:\r) +(-150:\d)node{5} +(75:\d)node{4} +(-15:\d)node{3}
(0,0) +(-90:\d)node{0} +(150:\d)node{2} +(30:\d)node{1};
\path (30:\e)node{7} (150:\e)node{6} (-90:\e)node{5} (45:\r+0.5)node{4};
\path (-90:\r+1.2)node{$\lantern_1$};
\end{scope}
\node (ref) at (0,0){\phantom{$-$}};
\end{tikzpicture}
\caption{Lantern diagram for the $4_1$ knot.}\label{fig-41-lantern}
\end{figure}

The quantum trace of the meridian $\mu$ was calculated in \cite[Section~6.2]{GY} in
agreement with Theorem~\ref{thm-peri}. Let $K_b$ be the knot in $M$ shown in
Figure~\ref{fig-Kb}, and let $K_b^{(2)}$ be the 2-parallel. By
\cite[Theorem~2.1]{BL:twist}, $\mu^n\cdot K_b$ and $\mu^n\cdot K_b^{(2)}$ is a basis
of $\skein(M)$. Thus, $\qtr_\triang(K_b)$ and $\qtr_\triang(K_b^{(2)})$ determine the
entire quantum trace map by Theorem~\ref{thm-peri}.
%Kb is the unknot component in L8n2

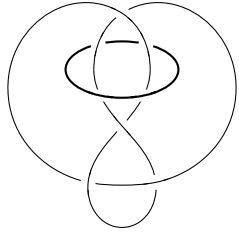
\begin{figure}[htpb!]
\begin{tikzpicture}[scale=1.5]
\node (A) at (0,1) {};
\node (B) at (0,0) {};
\node (L) at (-0.3,-0.5) {};
\node (R) at (0.3,-0.5) {};
\begin{knot}[background color=white]
  \strand (0.25,0.5) to[out angle=90,curve through={(A)..(-1,0.2)..(L)},in angle=180]
  (0,-0.525);
  \strand (0,-0.525) to[out angle=0,curve through={(R)..(1,0.2)..(A)},in angle=90]
  (-0.25,0.5);
\strand (-0.25,0.5) to[out angle=-90,curve through={(B)..(R)},in angle=0] (0,-0.9);
\strand (0,-0.9) to[out angle=180,curve through={(L)..(B)},in angle=-90] (0.25,0.5);
\strand[thick,x radius=0.5,y radius=0.25] (0.5,0.5) arc[start angle=0,delta angle=360];
\flipcrossings{2,7,8}
\end{knot}
\end{tikzpicture}
\caption{The knot $K_b$ in the complement of $4_1$.}
\label{fig-Kb}
\end{figure}

The lantern diagram of $K_b$ is given in Figure~\ref{fig-Kb-lantern}. To calculate
its quantum trace, we assign matching states to the endpoints of the arcs. Because of
the standard arc in $\lantern_0$, the only nonzero terms are the all $+$ and all $-$
states. Then by twisting the arc in $\lantern_1$ using Lemma~\ref{lemma-tw}, we get
\begin{equation}
\label{eq-Kbtr}
\qtr_\triang(K_b)=q^{-1/2}(\qz'^{-1}_0(\qz'^{-1}_1-\qz'_1)-\qz'_0\qz'^{-1}_1)
\in \qglue(\triang) \,.
\end{equation}
Since $K_b$ has 0 linking number with $4_1$, it has 0 homology in $M$, and it is an
element of the even skein module $\skeinev(M)$. However, the diagram
Figure~\ref{fig-Kb-lantern} does not have 0 homology on $\surface_\triang$, so the
quantum trace \eqref{eq-Kbtr} calculated from this diagram is not obviously in the
even gluing module $\qgluev(\triang)$. This can be fixed by using
$\qe_0=[\qz_0^2\qz'_0\qz_1^2\qz'_1]$ and $[\qz_j\qz'_j\qz''_j]=\iunit q^{1/4}$ to
rewrite $\qtr_\triang(K_b)=(-q^{1/2}\qe_0^{-1})\qtr_\triang(K_b)$ as
\begin{equation}
\qtr_\triang(K_b)%=(-q^{1/2}\qe_0^{-1})\qtr_\triang(K_b)
=-q^{-1/2}(\qZ_0^{-1}\qZ''_1+\qZ''_0\qZ_1^{-1}+\qZ''_0\qZ''_1) \in \qgluev(\triang) \,.
\end{equation}
% This shows that $\qtr_\triang(K_b)$ is in fact in $\qgluev(\triang)$.
This agrees with the conjecture
of \cite{AGLR} and matches the calculations of \cite{PP} up to conventions. The same
calculation can be done with $K_b^{(2)}$, the result given by
\begin{equation}
  \qtr_\triang(K_b^{(2)})
  =q^{-2}\qZ'^{-1}_0\qZ'^{-1}_1+q^{-1}(\qZ'_0\qZ'^{-1}_1+\qZ'^{-1}_0\qZ'_1)\\
-(q^{-1}+q^{-2})(\qZ'^{-1}_0+\qZ'^{-1}_1)+q^{-2}+1.
\end{equation}
%\begin{equation}
%\begin{split}
%  \qtr_\triang(K_b^{(2)})&
%  =q^{-2}\qZ'^{-1}_0\qZ'^{-1}_1+q^{-1}(\qZ'_0\qZ'^{-1}_1+\qZ'^{-1}_0\qZ'_1)\\
%&\quad-(q^{-1}+q^{-2})(\qZ'^{-1}_0+\qZ'^{-1}_1)+q^{-2}+1.
%\end{split}
% \end{equation}
This time, the diagram already has even homology, so the result is manifestly in
$\qgluev(\triang)$.
This also matches \cite{AGLR,PP}. The authors in~\cite{PP} claimed that their
calculation does not match with \cite{AGLR}, but their results are actually equal in
$\qglue(\triang)$.

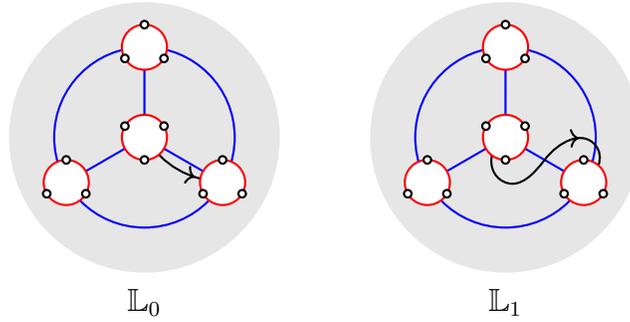
\begin{figure}[htpb!]
\centering
\begin{tikzpicture}[baseline=(ref.base),thick]
\tikzmath{\r=1.2;\s=0.3;\d=0.45;\e=\r*0.6;}
\fill[gray!20] (0,0)circle(\r+0.6);
\draw[blue] (0,0)circle(\r) foreach \t in {90,-30,-150} {(0,0) -- (\t:\r)};
\draw[-o-={0.7}{>}] (0,0) to[out=-55,in=175] (-30:\r);
\draw[red,fill=white] (90:\r)circle(\s) (-30:\r)circle(\s) (-150:\r)
circle(\s) (0,0) circle(\s);
\foreach \t in {-30,-150,90}
\draw[fill=white] (-\t:\s)circle(0.05) (\t:\r) foreach \q in {0,120,-120}
{+(\t+\q:\s)circle(0.05)};
\path (-90:\r+1)node{$\lantern_0$};
\begin{scope}[xshift=\r*4cm]
\fill[gray!20] (0,0)circle(\r+0.6);
\draw[blue] (0,0)circle(\r) foreach \t in {90,-30,-150} {(0,0) -- (\t:\r)};
\draw[-o-={0.7}{>}] (0,0)
to[out angle=-150,curve through={(-90:\r/2)..(-30:\r/2)..(-5:\r)},in angle=30] (-30:\r);
\draw[red,fill=white] (90:\r)circle(\s) (-30:\r)circle(\s) (-150:\r)
circle(\s) (0,0)circle(\s); \path (-90:\r+1)node{$\lantern_1$};
\foreach \t in {-30,-150,90}
\draw[fill=white] (-\t:\s)circle(0.05) (\t:\r)
foreach \q in {0,120,-120} {+(\t+\q:\s)circle(0.05)};
\end{scope}
\node (ref) at (0,0){\phantom{$-$}};
\end{tikzpicture}
\caption{Lantern diagram for $K_b$.}
\label{fig-Kb-lantern}
\end{figure}

Finally, 3d-index of $K_b$ is given by
\begin{equation}
  I_M(K_b)=-q^{-1/2}\sum_{k\in\ints}2q^kJ_\Delta(2k,k,-1)J_\Delta(2k+1,k,0)
  +q^{2k}J_\Delta(2k, k,-1)^2.
\end{equation}
Using methods described in \cite[Section~7]{GHRS}, we can find the first few terms
in the series expansion.
\begin{equation}
\begin{aligned}
-q^{1/2}I_M(K_b)
&=-3q-q^2+7q^3+15q^4+22q^5+11q^6-11q^7-60q^8+O(q^9),\\
\iota(-q^{1/2}I_M(K_b))
&=1-3q-6q^2-q^3+9q^4+28q^5+39q^6+45q^7+20q^8+O(q^9).
\end{aligned}
\end{equation}
%% see Mathematica file: mathematica/quantum.trace.map/index-with-insersion-v2.nb
The second line matches the calculations in \cite{AGLR}.

\subsection*{Acknowledgements}

The authors wish to thank Tudor Dimofte, Thang L\^e, and Mauricio Romo for
enlightening conversations.

%%%%%%%%%%%%%%%%%%%%%%%%%%%%%%%%%%%%%%%%%%%%%%%%%%%%%%%%%%%%%%%%%%%%%%%%%%%%
%%%%%%%%%%%%%%%%%%%%%%%%%%%%%%%%%%%%%%%%%%%%%%%%%%%%%%%%%%%%%%%%%%%%%%%%%%%%

\appendix

\section{Proof of Theorem~\ref{thm-peri}}
\label{sec-peri-proof}

We only need to prove the theorem for a set of $\gamma$ for which $K_\gamma$
generates $\skein(T^2)$. By \cite{BP:mult}, the skein algebra $\skein(T^2)$ is
generated by 3 elements $K_m,K_l,K_{m+l}$ for some basis $m,l$ of $H_1(T^2;\ints)$.
We start with the choice of $m,l$ to make the diagrams as simple as possible, and
then argue that quantum corrections vanish for these choices.

\subsection{Setting up the diagram}

%First, we put the curve $K_\gamma$ into a convenient position. 

The triangulation $\triang$ induces a triangulation $\triang_\partial$ of the
boundary torus. The fundamental domain of $\triang_\partial$ is (topologically) a
parallelogram. Let $m,l$ be the sides of the parallelogram with some orientation.
They define a basis of $H_1(T^2;\ints)$, and $m+l$ is a diagonal path of the
fundamental domain. Let $\gamma$ be any one of $m,l,m+l$. Then by pushing the path
slightly to the right, we obtain a simple closed curve $K_\gamma$ in general position
with the 1-skeleton of $\triang_\partial$. Moreover, there is at most one arc in each
triangle of $\triang_\partial$, and the arc is one of two types: a ``small"
counterclockwise arc or a ``big'' clockwise arc. See Figure~\ref{fig-substd-curve}.

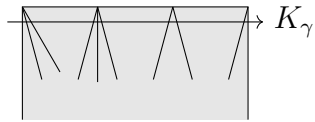
\begin{figure}[htpb!]
\centering
\begin{tikzpicture}
\draw[fill=gray!20] (0,-1.5) |- (3,0) -- (3,-1.5);
\draw (0,0) edge +(-75:1) edge +(-60:1);
\draw (1,0) edge +(-105:1) edge +(-90:1) edge +(-75:1);
\draw (2,0) edge +(-105:1) edge +(-75:1);
\draw (3,0) edge +(-105:1);
\draw[->] (-0.2,-0.2) -- (3.2,-0.2) node[right]{$K_\gamma$};
\end{tikzpicture}
\caption{The curve $K_\gamma$.}
\label{fig-substd-curve}
\end{figure}

The diagram $D_\gamma\subset\surface_\triang$ for $K_\gamma$ on the dual surface can
be obtained by the process in \cite[Section~6.1]{GY}, which we briefly recall here.
$\surface_\triang$ is obtained from $\triang_\partial$ by truncating the vertices and
rearranging the truncated triangles into lanterns. Note the curve $K_\gamma$ avoids
the vertices of $\triang_\partial$. Then the process turns $K_\gamma$ into
$D_\gamma$. The punctures on the boundary triangles of lanterns correspond to the
midpoints of edges of $\triang_\partial$.

In this process, small arcs become standard arcs, and big arcs become arcs that are
parallel to standard arcs but one click away at both endpoints. We call the second
type \term{substandard}. This is shown in Figure~\ref{fig-substd-arc}, where the blue
dashed arc is the truncation at the vertex of $\triang_\partial$, the blue solid arc
is standard as usual, and the black arc is substandard.

\begin{figure}[htpb!]
\centering
\begin{tikzpicture}[baseline=(ref.base)]
\draw[fill=gray!20] (0,0) -- (2,0) -- (60:2) -- cycle;
\draw[fill=white] (1,0)circle(0.05) (60:1)circle(0.05) +(1,0)circle(0.05);
\draw[blue,densely dashed] (0.4,0) arc[radius=0.4,start angle=0,end angle=60] (1.6,0)
arc[radius=0.4,start angle=180,end angle=120] (60:1.6)
arc[radius=0.4,start angle=-120,end angle=-60];
\draw[blue,thick,-o-={0.65}{>}] (0.6,0) arc[radius=0.6,start angle=0,end angle=60];
\draw[thick,-o-={0.4}{>}] (0.8,0) arc[radius=1.2,start angle=180,end angle=120];
\node (ref) at (1,{sqrt(3)/2}) {\phantom{$-$}};
\end{tikzpicture}
$\Longrightarrow$\quad
\begin{tikzpicture}[baseline=(ref.base)]
\tikzmath{\r=1.2;\s=0.3;\d=0.45;\e=\r*0.6;}
\fill[gray!20] (0,0)circle(\r+0.6);
\draw[blue,dashed] (0,0)circle(\r) foreach \t in {-30,-150,90} {(0,0) -- (\t:\r)};
\draw[thick,blue,-o-={0.6}{>}] (-150:\r) to[out=5,in=-125] (0,0);
\draw[thick,-o-={0.6}{>}] (-150:\r) to[out=-5,in=-175] (-30:\r);
\draw[thick,red,fill=white] (90:\r)circle(\s) (-30:\r)
circle(\s) (-150:\r)circle(\s) (0,0)circle(\s);
\foreach \t in {-30,-150,90}
\draw[fill=white] (-\t:\s)circle(0.05) (\t:\r)
foreach \q in {0,120,-120} {+(\t+\q:\s)circle(0.05)};
\node (ref) at (0,0) {\phantom{$-$}};
\end{tikzpicture}
\caption{Standard and substandard arcs.}
\label{fig-substd-arc}
\end{figure}
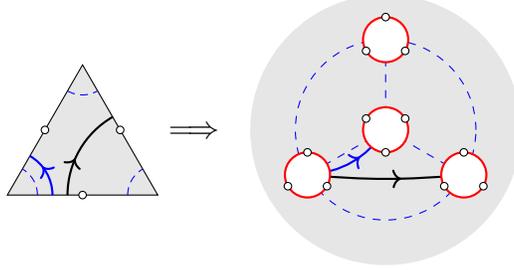

To calculate the quantum trace, we need to apply the splitting homomorphism to
$D_\gamma$, and the choice of height orders for splitting is crucial. A convenient
choice is to start at an intersection with an $A$-circle and assign monotonically
decreasing height to $D_\gamma$. Right before coming back to the starting point, a
sharp increase of height is needed to close up.

Since $K_\gamma$ passes through each triangle of $\triang_\partial$ at most once, the
diagram $D_\gamma$ has at most one arc in each region of the lanterns bounded by
standard arcs. Then each boundary edge of the lanterns has at most 2 endpoints.

If there is a boundary edge with exactly one endpoint, then increasing height in the
discussion above is actually irrelevant, since only the height order is preserved by
isotopy. Then we can write
\begin{equation}
\label{eq-peri-cut-I}
  \cut_A(D_\gamma)=\sum_{s_1,\dotsc,s_k=\pm}D^1_{\gamma,s_1s_2}\cup D^2_{\gamma,s_2s_3}
  \cup\dotsm\cup D^k_{\gamma,s_ks_1},
\end{equation}
where $D^i_{\gamma,\mu\nu}$ is the $i$-th arc of $D_\gamma$ with states $\mu,\nu$ at
the starting and ending points, respectively. We call this type of $\gamma$ type I.
Here, $\cup$-product is used since the splitting \eqref{eq-split} does not have the
correction factor from \eqref{eq-new-prod}.

If there is no such boundary edge, then we are forced to deal with the increasing
segment. We use the trick of coaction, where we double the $A$-circle and put the
segment in an annulus, which is later absorbed by the counit. By listing the finitely
many combinations of standard and substandard arcs, we see that the two segments in
the annulus always have opposite orientations. Therefore, the local picture looks
like Figure~\ref{fig-edge-2end}. Then the result of splitting is
\begin{equation}
\label{eq-peri-cut-II}
\cut_A(D_\gamma)=\sum_{s_1,\dotsc,s_k=\pm}
\epsilon\left(
\begin{linkdiag}
\fill[gray!20] (0,0)rectangle(1,1);
\draw[->] (0,0)--(0,1); \draw[<-] (1,0)--(1,1);
\draw[thick] (0,0.3) -- (1,0.3) (0,0.7) -- (1,0.7);
\draw (0,0.7)\stnodel{s_{t+1}} (0,0.3)\stnodel{s_k} (1,0.7)
\stnode{s_t} (1,0.3)\stnode{s_1};
\end{linkdiag}
\right)
D^1_{\gamma,s_1s_2}\cup\dotsm\cup D^{t-1}_{\gamma,s_{t-1}s_t}\cup
D^{t+1}_{\gamma,s_{t+1}s_{t+2}}\cup\dotsm\cup D^{k-1}_{\gamma,s_{k-1}s_k}.
\end{equation}
As before, $D^i_\ast$ denotes the $i$-th segment. The $t$-th and $k$-th segments are
in the counit. We call this type of $\gamma$ type II.

\begin{figure}[htpb!]
\centering
\begin{tikzpicture}[baseline=(ref.base)]
\fill[gray!20] (0,0) rectangle (1.5,2);
\draw (0,0) -- (0,2) (1.5,0) -- (1.5,2);
\foreach \x in {0,1.5} \foreach \y in {0.2,1.8}
\draw[fill=white] (\x,\y)circle(0.05);
\path[-o-={0.95}{>}] (1.5,1.8) -- (1.5,0.2);
\path[-o-={0.95}{>}] (0,0.2) -- (0,1.8);
\draw[thick] (0,0.5) -- +(1.5,0) (0,1.5) -- +(1.5,0);
\draw[thick,dashed,-o-={0.5}{>}] (0,1.5) arc[radius=0.5,start angle=90,end angle=270];
\draw[thick,dashed,-o-={0.5}{>}] (1.5,0.5) arc[radius=0.5,start angle=-90,end angle=90];
\draw[-Stealth] (2,0.2)node[right,inner sep=2pt]{\small start} -- (1.55,0.45);
\node (ref) at (1,1) {\phantom{$-$}};
\end{tikzpicture}
\caption{Local picture of 2 intersections with an edge.}
\label{fig-edge-2end}
\end{figure}
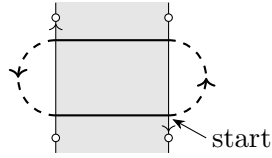

\begin{comment}
Note in \eqref{eq-peri-cut-I} and \eqref{eq-peri-cut-II}, by definition, the product
should be $\cup$, but the additional factor from \eqref{eq-new-prod} cancels for
paired boundary triangles, so the equations hold for $\cdot$ product as well. See
e.g.\ the end of the proof of \cite[Theorem~5.3]{GY}.
\end{comment}

\subsection{Height exchanges and $q$-commuting relations}

The quantum trace $\qtr_\triang(K_\gamma)$ is obtained from the splitting by applying
relations such as those in Lemma~\ref{lemma-tw} to reduce everything to standard
arcs. However, the fact that these relations only hold at the bottom (as a
consequence of the left ideal) means we cannot reduce each arc independently in an
obvious way. This issue is even more apparent in the calculation of
$\qtr_\triang(K_\gamma\cdot\alpha)$.

A careful maneuver is required to bring the arc we want to reduce to the right end of
the product. The moves below follow from \cite[Lemma~2.4]{Le:decomp}, but we use form
given in \cite[Equation~(50)]{LS}.

\begin{lemma}\label{lemma-htex}
In $\skein(\surface)$, we have the following height exchange moves.
\begin{align}
\label{eq-htex}
\relacross{\mu}{\nu}&=q^{\mu\nu/4}\relacross[<-]{\mu}{\nu}
+\delta_{\mu<\nu}q^{-1/4}(q^{1/2}-q^{-1/2})\relacross[<-]{\nu}{\mu}.\\
\label{eq-htex-neg}
\relacross[<-]{\mu}{\nu}&=q^{-\mu\nu/4}\relacross{\mu}{\nu}
+\delta_{\mu<\nu}q^{1/4}(q^{-1/2}-q^{1/2})\relacross{\nu}{\mu}.
\end{align}
\end{lemma}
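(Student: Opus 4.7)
The plan is to derive both height-exchange identities directly from the Kauffman bracket skein relation \eqref{eq-skein} and the boundary arc relations \eqref{eq-arcs}, essentially repackaging the height-exchange formulas established by L\^e in \cite{Le:decomp} and recorded in the form of \cite{LS} Equation (50). The strategy is to realize the height swap geometrically by a crossing introduced near the boundary, then resolve it with the skein relation.

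First, I would realize the height swap geometrically. Given the diagram $\relacross{\mu}{\nu}$ with the endpoint of state $\mu$ at higher height than the endpoint of state $\nu$, a small isotopy along the boundary edge that pushes the upper endpoint below the lower one introduces a single crossing just inside the boundary, and produces the configuration of $\relacross[<-]{\mu}{\nu}$ with this crossing attached. The over/under choice is fixed by the framing convention so that the resulting diagram equals $\relacross[<-]{\mu}{\nu}$ precomposed with a specific positive (or negative) crossing.

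Next, I would apply \eqref{eq-skein} to resolve this crossing. The two smoothings give, on the one hand, two uncrossed strands ending at the boundary in the swapped height order (that is, $\relacross[<-]{\mu}{\nu}$ up to a $q^{\pm 1/4}$ factor), and, on the other hand, a configuration consisting of a short boundary arc of the form $\relarc{\mu}{\nu}$ or $\relarc{\nu}{\mu}$ together with the diagram $\relacross[<-]{\nu}{\mu}$ (states swapped). The boundary arc evaluates via \eqref{eq-arcs} to a scalar multiple of the empty diagram: it is zero when $\mu=\nu$, gives $q^{-1/8}$ when $(\mu,\nu)=(+,-)$, and gives $-q^{-5/8}$ when $(\mu,\nu)=(-,+)$. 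This explains the $\delta_{\mu<\nu}$ restriction: only one of the two off-diagonal state choices contributes. Collecting powers of $q^{1/4}$ from the crossing resolution and the boundary arc yields the coefficient $q^{-1/4}(q^{1/2}-q^{-1/2})$ of the corrective term, while the diagonal contribution reassembles into $q^{\mu\nu/4}\relacross[<-]{\mu}{\nu}$ after a case-by-case verification for $(\mu,\nu)\in\{\pm\}^2$. Identity \eqref{eq-htex-neg} then follows either by applying the same argument to the opposite crossing, or algebraically by inverting the $4\times 4$ matrix that \eqref{eq-htex} defines on the basis indexed by state pairs.

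The main obstacle will be bookkeeping the signs and the powers of $q^{1/8}$ arising from the crossing, the framing correction due to the isotopy near the boundary, and the arc relations \eqref{eq-arcs}, so that all four cases of $(\mu,\nu)$ assemble into the uniform closed-form expression stated in the lemma. Once this is done, the identities become straightforward reformulations of the height-exchange rule of L\^e.
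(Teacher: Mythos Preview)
Your approach is correct and is precisely the standard derivation: introduce a crossing via the isotopy in \eqref{eq-marking-flip}, resolve it with \eqref{eq-skein}, and evaluate the resulting boundary cap with \eqref{eq-arcs}. This is exactly what the paper invokes by citing \cite[Lemma~2.4]{Le:decomp} and \cite[Equation~(50)]{LS} in lieu of a proof, so there is nothing to compare. One minor remark: there is no separate ``framing correction'' from the boundary isotopy---moving an endpoint along a boundary edge is an isotopy of tangles and contributes no extra factor---so the only powers of $q$ you need to track are the $q^{\pm 1/4}$ from the crossing resolution and the $q^{-1/8}$ or $-q^{-5/8}$ from \eqref{eq-arcs}.
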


The following corollary is checked by direct calculation. For convenience, we say a
stated arc is \term{diagonal} if the states at both endpoints are the same.

\begin{corollary}
\label{cor-qcomm}	
The following $q$-commutation relations holds in the skein algebra
$\skein(\lantern)$.
\begin{enumerate}
\item Suppose $a,b$ are disjoint arcs with the same state at all 4 endpoints, then
  they $q$-commute.
\item Let $a$ be an off-diagonal arc with endpoints on different boundary edges. Then
$a$ $q$-commutes with arcs isotopic to it but with possibly different states.
\item Suppose $a$ and $b$ are segments of $D_\gamma$ in the same lantern, and $a$ is
standard. If $a$ has $+$ and $-$ states at the starting and ending points
respectively, then $a$ and $b$ $q$-commute.
\end{enumerate}
\end{corollary}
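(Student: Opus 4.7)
The plan is to verify each of the three parts by a direct case check using the height exchange moves from Lemma~\ref{lemma-htex}. Given two arcs $a$ and $b$ to compare, we bring $a \cup b$ to $b \cup a$ by successively swapping heights at each boundary edge where both $a$ and $b$ have endpoints. Each such swap uses \eqref{eq-htex} or \eqref{eq-htex-neg} and produces a scalar factor $q^{\pm\mu\nu/4}$ plus, when $\mu<\nu$, a correction term proportional to $q^{1/2}-q^{-1/2}$. Showing that $a$ and $b$ $q$-commute amounts to showing that the correction terms either do not arise or cancel via other relations.

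For part~(1), the common-state hypothesis forces $\mu=\nu$ at every shared boundary edge, so the indicator $\delta_{\mu<\nu}$ is identically zero, no correction terms appear, and the pure $q^{\mu\nu/4}=q^{1/4}$ factors multiply to the desired commutation constant. For part~(2), the off-diagonal hypothesis on $a$ combined with the distinct-edge hypothesis allows a short case check on the four possible state assignments of the isotopic twin. In each case, any correction term that would be triggered at an endpoint of $a$ produces a configuration with two identically-stated endpoints on the same boundary edge, which vanishes by the boundary relations $\relarc{+}{+}=0$ and $\relarc{-}{-}=0$ from~\eqref{eq-arcs}; the remaining pure $q^{\mu\nu/4}$ factors recombine into the claimed commutation factor.

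Part~(3) is the most restrictive. With the standard arc $a$ carrying state $+$ at its starting endpoint and state $-$ at its ending endpoint, the state pattern constrains each height exchange: at the $-$ endpoint we have $\mu\ge\nu$ for any strand of $b$ passing over it, so no correction term is produced, and at the $+$ endpoint any triggered correction term would force two $+$'s on the same boundary edge of $a$, which is killed by $\relarc{+}{+}=0$. The main obstacle is purely combinatorial bookkeeping: one must enumerate the finitely many positions that a segment $b$ of $D_\gamma$ can occupy inside a single lantern—standard or substandard, and across each of the four boundary triangles—and verify the above state inequalities and boundary-relation cancellations in every case. The finiteness of this enumeration is precisely what makes the ``direct calculation'' tractable, and once exhausted it establishes all three $q$-commutation identities.
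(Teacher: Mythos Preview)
Your argument for part~(1) is correct and matches the paper's proof exactly.

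For parts~(2) and~(3), however, there is a genuine gap: you invoke the boundary relations $\relarc{+}{+}=0$ and $\relarc{-}{-}=0$ from~\eqref{eq-arcs} to kill the correction terms, but these relations do not apply here. Those relations concern a \emph{single} arc with both endpoints on the \emph{same} boundary edge (a cap). The correction term in~\eqref{eq-htex} does something different: it swaps the states on two distinct strands meeting the boundary edge, without merging them into a cap. Having two separate strands end on the same boundary edge with the same state is a perfectly nonzero configuration in $\skein(\lantern)$. So the mechanism you propose for killing the corrections is simply not available.

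What actually happens is subtler. For part~(2), when the two arcs are isotopic, a correction term produces a product of the same isotopic pair with permuted states; these feed back into the relation you are trying to establish, and one must solve the resulting linear system (this is what the paper defers to \cite[Section~4.6]{GY}). For part~(3), the paper splits into three subcases: if $a$ and $b$ share no boundary edge, $q$-commutation comes purely from the $\cdot$-product twist~\eqref{eq-new-prod}; if $b$ is isotopic to $a$, it reduces to part~(2); only the two residual geometric configurations of Figure~\ref{fig-qcomm-std} require the direct height-exchange check, and there the correction terms must be tracked honestly rather than declared zero. Your enumeration idea for~(3) is on the right track, but the claimed vanishing mechanism at the $+$ endpoint does not hold.
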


\begin{proof}
(1) This directly follows from the lemma since $\mu=\nu$ implies that the second term
vanishes for both moves.

(2) This follows from the observations in \cite[Section~4.6]{GY}.

\begin{figure}[htpb!]
\centering
\begin{tikzpicture}[baseline=(ref.base)]
\tikzmath{\r=1.2;\s=0.3;\d=0.45;\e=\r*0.6;}
\fill[gray!20] (0,0)circle(\r+0.6);
\draw[thick,blue,-o-={0.5}{>}] (0,0) to[out=60,in=-60] (90:\r);
\draw[thick,-o-={0.3}{>}] (90:\r)..controls(150:\s+0.2)..(-150:\r);
\draw[thick,-o-={0.4}{>}] (-150:\r)..controls +(105:1) and +(120:1)..(0,0);
\draw[blue,dashed] (0,0)circle(\r) foreach \t in {-30,-150,90} {(0,0) -- (\t:\r)};
\draw[thick,red,fill=white] (90:\r)circle(\s) (-30:\r)
circle(\s) (-150:\r)circle(\s) (0,0)circle(\s);
\path[blue,inner sep=2pt] (60:\s)node[anchor=60]{$+$} (90:\r) +(-60:\s)
node[anchor=-60]{$-$};
\foreach \t in {-30,-150,90}
\draw[fill=white] (-\t:\s)circle(0.05) (\t:\r)
foreach \q in {0,135,-135} {+(\t+\q:\s)circle(0.05)};
\node (ref) at (0,0) {\phantom{$-$}};
\end{tikzpicture}
\caption{$q$-commuting positions for the standard arc.}
\label{fig-qcomm-std}
\end{figure}
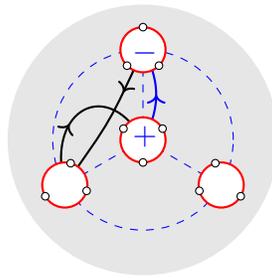

(3) If $a$ and $b$ do not have endpoints on the same boundary edge, then this follows
from definition \eqref{eq-new-prod}. If $b$ is isotopic to $a$ but with possibly
different states, then this follows from (2). The only remaining cases are shown in
Figure~\ref{fig-qcomm-std}. Then the $q$-commuting property follows from direct
calculations using the previous lemma.
\end{proof}

Let $c$ be a substandard arc on $\lantern$ with its ``big clockwise'' orientation.
Define $c_{\mu\nu}$ as $c$ with states $\mu$ and $\nu$ at the starting and ending
points respectively. See Figure~\ref{fig-substd-local}. By Lemma~\ref{lemma-tw},
$c_{+-}$ (by itself) reduces to an off-diagonal standard arc in $\skeincr(\lantern)$
of the form in Corollary~\ref{cor-qcomm}(3).

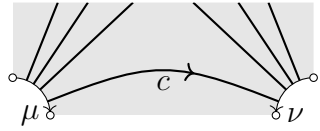
\begin{figure}[htpb!]
\centering
\begin{tikzpicture}
\tikzmath{\h=2;\w=2*\h;\s=0.5;}
\begin{scope}
\clip (90:\s) arc[radius=\s,start angle=90,end angle=0] -- (\w-\s,0)
	arc[radius=\s,start angle=180,end angle=90] |- (0,1.5) -- cycle;
\fill[gray!20] (0,0) rectangle (\w,1.5);
\draw[thick,-o-={0.6}{>}] (0,0) ..controls (\h,0.8).. (\w,0)
node[midway,below,inner sep=2pt]{$c$};
\draw[thick] (0,0) edge (0.3*\h,1.5) edge (0.5*\h,1.5) edge (0.8*\h,1.5)
(\w,0) edge (1.2*\h,1.5) edge (1.5*\h,1.5) edge (1.7*\h,1.5);
\end{scope}
\draw[-o-={0.95}{>}] (90:\s) arc[radius=\s,start angle=90,end angle=0];
\draw[-o-={0.95}{>}] (\w,\s) arc[radius=\s,start angle=90,end angle=180];
\draw[fill=white] foreach \t in {0,90} {(\t:\s)circle(0.05) (\w,0)+(180-\t:\s)
  circle(0.05)};
\path (\s,0)node[left]{$\mu$} (\w-\s,0)node[right]{$\nu$};
\end{tikzpicture}
\caption{Locally picture of $c_{\mu\nu}\cup\alpha$.}
\label{fig-substd-local}
\end{figure}

For convenience, we call $\alpha\in\skeincr(\lantern)$ \term{standard off-diagonal}
if it is a sum of products of standard arcs where each term has an off-diagonal arc.

\begin{lemma}\label{lemma-substd}
Let $\alpha$ be a diagram on $\lantern$ disjoint from the substandard arc $c$ up to
isotopy.
\begin{enumerate}
\item
  If $(\mu,\nu)=(+,-)$, then $c_{+-}$ and $\alpha$ $q$-commute in $\skein(\surface)$.
\item
  If $\mu=\nu$, and $\alpha$ is a product of standard arcs, then in
  $\skeincr(\lantern)$, $(c_{\mu\mu}-s_{-\mu})\cdot\alpha$ is standard off-diagonal.
  Here, $s_{-\mu}$ is the diagonal standard arc $s$ parallel to $c$ with state $-\mu$
  at both endpoints.
\end{enumerate}
\end{lemma}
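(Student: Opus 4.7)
Both parts of the lemma rest on combining the height-exchange formulas of Lemma~\ref{lemma-htex} with the reduction formulas of Lemma~\ref{lemma-tw} applied near each of the two boundary triangles met by $c$. The substandard arc $c$ has its two endpoints at the ``twisted up'' positions of these triangles, so \eqref{eq-twdown} applies at each end whenever $c$ sits at the bottom of a product.

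For part (1), I plan to argue edge-by-edge that moving $c_{+-}$ past the (isotopically disjoint) $\alpha$ vertically produces only a pure power of $q$ in $\skein(\surface)$. The exchange at the $+$-endpoint is clean because the correction term in \eqref{eq-htex} is killed by $\delta_{\mu<\nu}$ when $\mu=+$ is maximal. At the $-$-endpoint, a potential correction appears whenever $\alpha$ carries a $+$-state on that edge; the plan is to show that after applying the boundary arc relations \eqref{eq-arcs} and using the isotopic disjointness of $c$ from $\alpha$, each such correction either cancels against its partner under the inverse exchange \eqref{eq-htex-neg}, or is forced to match a vanishing configuration from \eqref{eq-arcs}. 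This forces the overall exchange factor to be a pure power of $q$, yielding $q$-commutation.

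For part (2), I will apply \eqref{eq-twdown} successively at the two endpoints of $c$ inside the expression $c_{\mu\mu}\cdot\alpha$; this is valid because $\alpha$ sits entirely above $c$, so the ``at the bottom'' hypothesis of Lemma~\ref{lemma-tw} holds. Each application expands into two terms, indexed by whether the endpoint state is preserved at $\mu$ or flipped to $-\mu$. In the resulting four-term expansion, exactly one term has both endpoints flipped to $-\mu$, yielding the diagonal standard arc $s_{-\mu}$ with a combined coefficient given by the product of two $-\iunit\, q^{(\cdots)/8}$ factors; a direct calculation using the $d,d',d''$ grading exponents together with the Weyl-ordering factor from \eqref{eq-new-prod} should show this combined coefficient equals exactly $+1$. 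The three remaining terms each have at least one endpoint of $c$ replaced by an off-diagonal standard arc, and right composition with the product of standard arcs $\alpha$ preserves this off-diagonality, placing them in the standard off-diagonal part as required.

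The main obstacle is the coefficient bookkeeping in part (2): the grading data at the second reduction depends on what happened at the first, and the identification of the combined coefficient of $s_{-\mu}$ with $+1$ requires careful tracking of the states on the third (unshown) boundary edge of each triangle, which is affected by $\alpha$. In part (1), the subtlety is that the $\skeincr$-reduction of $c_{+-}$ to a standard off-diagonal arc already yields $q$-commutation in $\skeincr$, but the lemma asserts $q$-commutation already in $\skein(\surface)$; the argument therefore must live entirely at the level of genuine surface isotopies and the boundary relations \eqref{eq-arcs}--\eqref{eq-stex}, without recourse to the corner relations \eqref{eq-crdef}.
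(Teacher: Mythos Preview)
Your proposal has two substantial errors, one in each part.

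\textbf{Part (2).} You claim that in $c_{\mu\mu}\cdot\alpha$, ``$\alpha$ sits entirely above $c$'' so that the ``at the bottom'' hypothesis of Lemma~\ref{lemma-tw} holds for $c$. This is backwards. By the paper's convention, $\alpha\cup\beta$ stacks $\alpha$ \emph{above} $\beta$, and $\cdot$ differs from $\cup$ only by a scalar. Hence in $c_{\mu\mu}\cdot\alpha$ (equivalently $c_{\mu\mu}\cup\alpha$), the arc $c$ is on \emph{top}, not at the bottom, and the twist relations \eqref{eq-twup}--\eqref{eq-twdown} do not apply to it directly. The paper's proof therefore first uses height exchanges to bring $c_{++}$ to the bottom, producing $c_{++}\cup\alpha=q^{(d'_L-d_R)/4}\alpha\cup c_{++}+\alpha'$; the correction terms $\alpha'$ that arise on the $\nu=+$ side (from \eqref{eq-htex-neg}) turn the arc into $c_{+-}$, which is then disposed of using part~(1). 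Only after this does one twist $c_{++}$ at the bottom via Lemma~\ref{lemma-tw}, and one still needs a final step of flipping $\alpha\cup s_-$ back to $s_-\cup\alpha$ (up to off-diagonal terms) to compare with $s_{-\mu}\cdot\alpha$. Your four-term expansion skips all of this.

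\textbf{Part (1).} Your plan posits correction terms on the $-$-endpoint that you hope to cancel via \eqref{eq-arcs} or pairing with \eqref{eq-htex-neg}. No such cancellation is needed, and the mechanism you describe does not obviously produce one: the correction in \eqref{eq-htex} swaps states between parallel strands and does not create the arc configurations of \eqref{eq-arcs}. The paper's argument (essentially that of \cite[Theorem~7.1]{CL}) is that the two endpoints of $c$ sit on their respective boundary edges with opposite spatial handedness, so one uses \eqref{eq-htex} on the $\mu$ side and \eqref{eq-htex-neg} on the $\nu$ side. With $\mu=+$ the correction $\delta_{+<\cdot}$ vanishes in \eqref{eq-htex}; with $\nu=-$ the correction $\delta_{\cdot<-}$ vanishes in \eqref{eq-htex-neg}. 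Both sides are therefore clean, giving $c_{+-}\cup\alpha=q^k\alpha\cup c_{+-}$ immediately.
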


\begin{proof}
The local picture of $c_{\mu\nu}\cup\alpha$ is given in
Figure~\ref{fig-substd-local}, and recall $c_{\mu\nu}\cdot\alpha$ differs from it by
the factor in \eqref{eq-new-prod}. For both parts, we need to apply
Lemma~\ref{lemma-htex} repeatedly to bring certain endpoints to the lowest for corner
reductions.

First look at (1). This is a more advanced version of Corollary~\ref{cor-qcomm}(3)
and essentially the same proof as \cite[Theorem~7.1]{CL}. On the $\mu=+$ side, we
apply \eqref{eq-htex}. Then the second term is 0 every time, so the entire height
exchange is a single term with a power of $q$ depending on which states are passed
through. The $\nu=-$ side is similar. The result of the height exchanges is
$q^k\alpha\cup c_{+-}$ for some $k$. Therefore, $c_{+-}$ and $\alpha$ $q$-commute
(for both $\cup$ and $\cdot$).

Next consider (2) with $\mu=\nu=+$. In anticipation of the application of
Lemma~\ref{lemma-tw}, let $d'_L$ and $d_R$ be the gradings of $\alpha$ on the left
and right edges in Figure~\ref{fig-substd-local}, respectively. This also defines
$d_L,d''_L,d'_R,d''_R$ by the convention of Lemma~\ref{lemma-tw}.

The $\mu=+$ side is the same as (1) where the total power of $q$ is $q^{d'_L/4}$. The
$\nu=+$ side now has corrections from the second term of \eqref{eq-htex-neg}, but the
leading term still safely move the $+$ state to the bottom with a factor of
$q^{-d_R/4}$. In a correction term, $\nu=+$ passes through a $-$ state and the states
get switched. This means the substandard arc becomes $c_{+-}$, so by part (1), it can
be moved to the bottom, which then becomes standard off-diagonal after twisting, as
mentioned before the lemma. Therefore,
\begin{equation}
c_{++}\cup\alpha=q^{\frac{1}{4}(d'_L-d_R)}\alpha\cup c_{++}+\alpha',
\end{equation}
where $\alpha'$ is standard off-diagonal. Now we can twist $c_{++}$ using
Lemma~\ref{lemma-tw} to get
\begin{equation}
\label{eq-substd-postw}
\alpha\cup c_{++}=(\iunit q^{\frac{1}{8}(d_L-(d'_L+1)+3)})(
-\iunit q^{\frac{1}{8}((d_R+1)-d'_R-3)})\alpha\cup s_-+\text{(off-diagonal term)}.
\end{equation}
The off-diagonal term comes from the $+$ state term in the second part of
\eqref{eq-twdown}, which can be combined with $\alpha'$.
% This term can be twisted in the opposite direction, resulting in $c_{+-}$.
% (See also \cite[Equation~(34)]{GY}.) Thus, the additional term

Next, we try to flip the product order in $\alpha\cup s_-$. Standard arcs away from
$s$ $\cup$-commutes with $s_-$, and the rest of the standard arcs $\cup$-commutes
with $s_-$ up to standard off-diagonal terms. See e.g.\ \cite[Section~4.6]{GY}. Thus,
$\alpha\cup s_--s_-\cup\alpha$ is off-diagonal, and we can flip the $\cup$-product
order in \eqref{eq-substd-postw}.

Finally, we can put in the factors from \eqref{eq-new-prod} for both
$c_{++}\cup\alpha$ and $s_-\cup\alpha$. After everything is combined, we get
\begin{equation}
c_{++}\cdot\alpha=q^{\frac{1}{4}(d_L-d'_R)}s_-\cdot\alpha+\alpha'.
\end{equation}
If $\alpha$ is already off-diagonal, then this shows $c_{++}\cdot\alpha$ is standard
off-diagonal, and so is $(c_{\mu\mu}-s_{-\mu})\cdot\alpha$. If $\alpha$ is indeed
diagonal, then $d_L=d'_R$, so $(c_{\mu\mu}-s_{-\mu})\cdot\alpha=\alpha'$ is standard
off-diagonal. This proves part (2) for $\mu=\nu=+$. The case of $\mu=\nu=-$ is
similar, so we omit the calculation here.
\end{proof}

\subsection{Type I}

We start with the simpler case of type I $\gamma$.

Given $\alpha\in\skein(M)$, the quantum trace $\qtr_\triang(\alpha)$ is represented
by $\sum_i r_i\alpha_i$ where $r_i\in\ground$ and
$\alpha_i\in\bigotimes_j\skeincr(\lantern_j)$ is a product of standard arcs. Since
the reduction from $\alpha$ to $\alpha_i$ happens ``below'' $D_\gamma$, we also have
\begin{equation}
\label{eq-peri-act-cut}
\qtr_\triang(K_\gamma\cdot\alpha)=\sum_{s_1,\dotsc,s_k=\pm}
q^{w_s}D^1_{\gamma,s_1s_2}\cdot D^2_{\gamma,s_2s_3}
\dotsm D^k_{\gamma,s_ks_1}\cdot\sum_ir_i\alpha_i,
\end{equation}
where $w_s$ is the correction factor for turning $\cup$ into $\cdot$. Suppose not all
states $s_\ast$ are the same, then there exists some $(s_j,s_{j+1})=(+,-)$, where
$s_{k+1}=s_1$ for convenience. If $D^j_{\gamma,s_js_{j+1}}$ is standard, then by
Corollary~\ref{cor-qcomm}(3), it can be brought to the front of the product, so the
term becomes $0$ in $\qglue(\triang)$. If it is substandard instead, then we use
Lemma~\ref{lemma-substd} to bring it to the bottom. As noted before
Lemma~\ref{lemma-substd}, by twisting it into a standard arc,
Corollary~\ref{cor-qcomm}(3) applies again to show that the term is $0$ in
$\qglue(\triang)$.

This shows that there are only two potentially nonzero choices of states in
\eqref{eq-peri-act-cut}. Take the all $+$ state term. We can twist the substandard
segments of $D_\gamma$ using Lemma~\ref{lemma-substd} from the lowest to the highest.
If $\alpha_i$ is off-diagonal, then it is a $0$ term in $\qtr_\triang(\alpha)$ by
definition, and it is also a $0$ term in \eqref{eq-peri-act-cut} by
Lemma~\ref{lemma-substd}. If $\alpha_i$ is diagonal, then up to terms that will
eventually become $0$ in $\qglue(\triang)$, each substandard segment can be replaced
by the corresponding standard arc with $-$ states by Lemma~\ref{lemma-substd}. The
same process is applied to the all $-$ state term, and we get
\begin{equation}
\label{eq-peri-act-tw}
\qtr_\triang(K_\gamma\cdot\alpha)=\sum_{s=\pm}q^{w_s}
\tilde{D}^1_{\gamma,s}\cdot\tilde{D}^2_{\gamma,s}\dotsm\tilde{D}^k_{\gamma,s}
\cdot\qtr_\triang(\alpha),
\end{equation}
where $\tilde{D}^t_{\gamma,s}$ is the standard arc $D^t_{\gamma,ss}$ if the $t$-th
segment of $K_\gamma$ is counterclockwise, and it is the standard arc parallel to
$D^t_{\gamma,\ast}$ with state $-s$ if the $t$-th segment of $K_\gamma$ is clockwise.
% Here, unlike \eqref{eq-peri-cut-I}, we must use $\cdot$-product as required
% by Lemma~\ref{lemma-substd}.

The Weyl-ordering of the product of $\tilde{D}$ in \eqref{eq-peri-act-tw} is the
definition of $\qm_\gamma^{\pm1}$, so to prove the theorem, the last step is to show
that the Weyl-ordering factor cancels $q^{w_s}$. This follows from the following
claims.
\begin{enumerate}
\item
  The original arcs $D^1_{\gamma,ss},D^2_{\gamma,ss},\dotsc,D^k_{\gamma,ss}$ $q$-commute.
\item
  The $q$-commuting relations of $\tilde{D}^\ast_{\gamma,s}$ are the same as
  $D^\ast_{\gamma,ss}$.
\item
  $D^1_{\gamma,ss}\cup D^2_{\gamma,ss}\cup \dotsm\cup D^k_{\gamma,ss}$ is the
  Weyl-ordering of $D^1_{\gamma,ss}\cdot D^2_{\gamma,ss}\dotsm D^k_{\gamma,ss}$.
\end{enumerate}

Now we establish the claims. (1) follows from Corollary~\ref{cor-qcomm}(1). (2) is
checked by direct calculation since there are finitely many types of arcs. For (3),
we need to compare the Weyl-ordering $q^{w_s}$ with the factor from
\eqref{eq-new-prod}. Consider a pair $D^i_{\gamma,ss}$ and $D^j_{\gamma,ss}$ where
$i<j$. The $q$-commutation between them has two sources. If they have endpoints on
the same boundary triangle but not the same boundary edge, then the $q$-commutation
comes from \eqref{eq-new-prod}, so this part of Weyl-ordering factor matches. If they
have endpoints on the same boundary edge, then the $q$-commutation comes from height
exchanges. However, since these arcs arise from splitting, there is another pair of
arcs on the other side with the opposite height exchange $q$-commutation. Thus, these
two pairs have cancelling contributions to the Weyl-ordering, and they also have no
contribution in \eqref{eq-new-prod}. This proves (3).

\subsection{Type II}

Now we consider type II $\gamma$. The outline of the proof is the same, so we only
explain the parts that are different.

The formula for the quantum trace is now
\begin{equation}
\begin{split}
\qtr_\triang(K_\gamma\cdot\alpha)=\sum_{s_1,\dotsc,s_k=\pm}
q^{w_s}\epsilon\left(
\begin{linkdiag}
\fill[gray!20] (0,0)rectangle(1,1);
\draw[->] (0,0)--(0,1); \draw[<-] (1,0)--(1,1);
\draw[thick] (0,0.3) -- (1,0.3) (0,0.7) -- (1,0.7);
\draw (0,0.7)\stnodel{s_{t+1}} (0,0.3)\stnodel{s_k} (1,0.7)\stnode{s_t} (1,0.3)
\stnode{s_1};
\end{linkdiag}
\right)
&D^1_{\gamma,s_1s_2}\dotsm D^{t-1}_{\gamma,s_{t-1}s_t}\cdot\\
&\quad D^{t+1}_{\gamma,s_{t+1}s_{t+2}}\dotsm D^{k-1}_{\gamma,s_{k-1}s_k}\cdot\sum_ir_i\alpha_i.
\end{split}
\end{equation}
Again, if not all states are the same, then we can find $(s_j,s_{j+1})=(+,-)$. If
$j\ne t,k$, then the argument is the same as type I. If $j$ is one of $t,k$, then the
counit is zero using height exchanges. See also \cite[Lemma~4.7]{LeYu:SLn}. Thus, the
only nonzero terms are again the ones with all $+$ or all $-$ states. In this case,
the counit is $q^{-1/4}$ for both choices of states.

The twists of the substandard arcs are the same as before, which gives
\begin{equation}
  \qtr_\triang(K_\gamma\cdot\alpha)=\sum_{s=\pm}q^{w_s-1/4}\tilde{D}^1_{\gamma,s}
  \dotsm\tilde{D}^{t-1}_{\gamma,s}\cdot\tilde{D}^{t+1}_{\gamma,s}
  \dotsm\tilde{D}^{k-1}_{\gamma,s}\cdot\qtr_\triang(\alpha).
\end{equation}
Again, we need to argue that the factor $q^{w_s-1/4}$ is exactly the Weyl-ordering.
Claims (1) and (2) are the same as before. Claim (3) is modified to say that the
$\cup$-product times $q^{-1/4}$ is the Weyl-ordering of the $\cdot$-product. The
proof is almost the same except when it comes to the boundary edge that contains the
starting point. \eqref{eq-new-prod} still has no contribution on this edge, but the
$q$-commutations do not cancel on this edge since $s_1$ is higher than $s_t$, while
$s_k$ is lower than $s_{t+1}$. The contributions on this pair of edges is exactly
$q^{-1/4}$. This finishes the proof.

%%%%%%%%%%%%%%%%%%%%%%%%%%%%%%%%%%%%%%%%%%%%%%%%%%%%%%%%%%%%%%%%%%%%%%%%%%%%
%%%%%%%%%%%%%%%%%%%%%%%%%%%%%%%%%%%%%%%%%%%%%%%%%%%%%%%%%%%%%%%%%%%%%%%%%%%%

\bibliographystyle{hamsalpha}
\bibliography{biblio}
\end{document}